\theoremstyle{plain}
\newtheorem{teor}{Theorem}[section]
\newtheorem{lem}[teor]{Lemma}
\newtheorem{cor}[teor]{Corollary}
\newtheorem{prop}[teor]{Proposition}
\theoremstyle{definition}
\newtheorem{deft}[teor]{Definition}
\newtheorem{es}[teor]{Example}
\theoremstyle{remark}
\newtheorem{oss}[teor]{Remark}
\newcommand{\R}{\mathbb{R}}
\newcommand{\Z}{\mathbb{Z}}
\newcommand{\bbS}{\mathbb{S}}
\newcommand{\hyp}{\mathbb{H}}
\newcommand{\Vol}{\textup{Vol}}
\newcommand{\uph}{\textup{H}}
\newcommand{\hcb}{\textup{H}_{cb}}
\newcommand{\homs}{\textup{Homeo}^+(\mathbb{S}^1)}
\newcommand{\comp}{\textup{comp}}
\newcommand{\eu}{\textup{eu}}
\newcommand{\po}{\textup{PO}}
\newcommand{\Isom}{\textup{Isom}}
\newcommand{\trans}{\textup{trans}}
\newcommand{\orien}{\mathfrak{o}}
\title[A Matsumoto/Mostow result for hyperbolic cocycles]{A Matsumoto-Mostow result for Zimmer's cocycles of hyperbolic lattices}
\author[]{M. Moraschini}
\address{Fakult\"{a}t f\"{u}r Mathematik, Universit\"{a}t Regensburg, 93040 Regensburg, Germany}
\email{marco.moraschini@ur.de}
\author[]{A. Savini}
\address{Department of Mathematics, University of Bologna, Piazza di Porta San Donato 5, 40126 Bologna, Italy}
\email{alessio.savini5@unibo.it}
\keywords{lattice, Zimmer's cocycle, volume, Euler class, boundary map, bounded cohomology, coupling}
\subjclass[2010]{57T10, 57M27, 53C35}
\date{\today.\ \copyright{\ M.~Moraschini, A. Savini 2019}.
  The first author was supported by the CRC~1085 \emph{Higher Invariants}
  (Universit\"at Regensburg, funded by the DFG). The second author was partially supported by the project \emph{Geometric and harmonic analysis with applications}, funded by EU Horizon 2020 under the Marie Curie grant agreement No 777822.}
\begin{document}

\begin{abstract}
Following the philosophy behind the theory of maximal representations, we introduce the volume of a Zimmer's cocycle $\Gamma \times X \rightarrow \po^\circ(n, 1)$, where $\Gamma$ is a torsion-free (non-)uniform lattice in $\po^\circ(n, 1)$, with $n \geq 3$, and $X$ is a suitable standard Borel probability $\Gamma$-space. Our numerical invariant extends the volume of representations for (non-)uniform lattices to measurable cocycles and in the uniform setting it agrees with the generalized version of the Euler number of self-couplings. We prove that our volume of cocycles satisfies a Milnor-Wood type inequality in terms of the volume of the manifold $\Gamma \backslash \mathbb{H}^n$. 
Additionally this invariant can be interpreted as a suitable multiplicative constant between bounded cohomology classes. This allows us to define a family of measurable cocycles with vanishing volume. The same interpretation enables us to characterize maximal cocycles for being cohomologous to the cocycle induced by the standard lattice embedding via a measurable map $X \rightarrow \po(n, 1)$ with essentially constant sign.

As a by-product of our rigidity result for the volume of cocycles, we give a different proof of the mapping degree theorem. This allows us to provide a complete characterization of maps homotopic to local isometries between closed hyperbolic manifolds in terms of maximal cocycles.

In dimension $n = 2$, we introduce the notion of Euler number of measurable cocycles associated to a closed surface group and we show that it extends the classic Euler number of representations. Our Euler number agrees with the generalized version of the Euler number of self-couplings up to a multiplicative constant. Imitating the techniques developed in the case of the volume, we show a Milnor-Wood type inequality whose upper bound is given by the modulus of the Euler characteristic of the associated closed surface. This gives an alternative proof of the same result for the generalized version of the Euler number of self-couplings. Finally, using the interpretation of the Euler number as a multiplicative constant between bounded cohomology classes, we characterize maximal cocycles as those which are cohomologous to the one induced by a hyperbolization. 
\end{abstract}

\maketitle

%*******************************************************
%Materiale Iniziale
%*******************************************************

\section{Introduction}

\subsection{Historical background} 

The study of lattices in semisimple Lie groups of non-compact type has many applications both to algebra and geometry. One of the most remarkable properties is given by their rigidity. The investigation of  rigidity of lattices was initiated by Mostow~\cite{mostow68:articolo}, who proved that in dimension $n \geq 3$ any two isomorphic torsion-free uniform lattices $\Gamma_1, \Gamma_2 <\po^\circ(n,1)$ must be conjugated by an element $g$ in  $\po(n,1)$. This phenomenon is known as \emph{Mostow rigidity}. Equivalently, from a geometric point of view, Mostow rigidity implies that any  two closed hyperbolic $n$-manifolds $M_1, M_2$ with isomorphic fundamental groups are isometric. Later this statement has been extended by Prasad~\cite{prasad:articolo} to complete finite-volume hyperbolic manifolds via the study of non-uniform lattices in $\po^\circ(n,1)$, whence the name of \emph{Mostow-Prasad rigidity}.

The study of rigidity properties of lattices has then developed in several different directions~\cite{margulis:super, margulis:ext, zimmer:annals, fisher:morris:whyte, MonShal, kida1, kida2, sauer:articolo, bucher2:articolo}. For instance, Prasad's work \cite{prasad:articolo} covers also non-uniform lattices in rank one Lie groups of non-compact type. Successively Margulis extended Mostow-Prasad rigidity to the higher rank case \cite{margulis:ext}. In fact in the latter setting Margulis proved actually a stronger result, known as \emph{Margulis superrigidity}~\cite{margulis:super}. Margulis' approach was then generalized to the theory of measurable cocycles by Zimmer~\cite{zimmer:annals}. In this wider setting Mostow rigidity can be translated in terms of \emph{tautness} of groups. Recall that a unimodular locally compact second countable group is \emph{taut} if every $(G, G)$-coupling can be trivialized to the tautological coupling. The notion of tautness may be interpreted as generalization of Mostow rigidity in the following sense: any lattice of a taut group satisfies Mostow rigidity~\cite{sauer:articolo}. Note that since any $(G, H)$-coupling $(\Omega, m_\Omega)$  has an associated right measure equivalence cocycle, we may interpret the theory of couplings as an instance of Zimmer's one. Indeed, a $(G, G)$-coupling $(\Omega, m_\Omega)$ is taut if and only if its right measure equivalence cocycle $\alpha_\Omega$ is cohomologous to the cocycle associated to the standard lattice embedding via a suitable measurable map. This correspondence shows once more the relation between tautness and Mostow rigidity. In that direction, it is worth mentioning the results of Monod and Shalom~\cite{MonShal} about cocycle superrigidity and the tautness of certain groups which arise as products, and Kida who proved in \cite{kida1, kida2} the tautness of certain mapping class groups. Bader, Furman and Sauer ~\cite{sauer:articolo} studied the $1$-tautness of $\po(n,1)$ when $n \geq 3$. Here $1$-tautness refers to some additional integrability assumptions on the couplings involved in the definition. If $n \geq 3$, the group $\po(n,1)$ is indeed $1$-taut and hence any of its lattices is $1$-taut relative to its inclusion in $\po(n,1)$ (see~\cite[Proposition~2.9]{sauer:articolo}) . Recently, some other examples of taut groups have arised in the literature~\cite{chifan:kida, bowen:taut}, and some other rigidity results have been obtained~\cite{Austin:GAFA, cantrell, gelander:levit, savini:tautness}.

%
%In this paper, we propose to combine the geometric approach followed in some alternative proof of Mostow rigidity \cite{Thurston, bucher2:articolo} with the recent development of rigidity properties in the framework of measure cocycles \cite{sauer:articolo}. 
A source of inspiration for this work was the proof of the $1$-tautness of $\po(n,1)$ exposed by Bader, Furman and Sauer \cite{sauer:articolo}. For this reason, we briefly recall their strategy. Working with a torsion-free \emph{uniform} lattice $\Gamma \leq \po^\circ(n, 1)$, Bader, Furman and Sauer define a numerical invariant associated to a $(\Gamma, \Gamma)$-coupling $(\Omega, m_\Omega)$, called Euler number~$\eu(\Omega)$. They show that the Euler number of self-couplings satisfies the following Milnor-Wood type inequality $|\eu(\Omega)| \leq \Vol(\Gamma \backslash \mathbb{H}^n)$. Under an additional integrability hypothesis, the study of maximal $(\Gamma, \Gamma)$-couplings leads to the desired conjugation, whence the $1$-tautness of $\po(n, 1)$.

A crucial step in their proof is the description of the Euler number via a sort of proportionality principle~\cite[Lemma~4.10]{sauer:articolo}. A careful reading of that proof and the involved diagrams shows that one can extend the notion of Euler number to arbitrary measurable cocycles $\Gamma \times X \rightarrow \po^\circ(n, 1)$, where $\Gamma$ is a uniform lattice and $(X, \mu_X)$ is a standard Borel probability $\Gamma$-space. More precisely, one may drop both the restrictions on the probability space and on the target group to obtain results similar to the ones proved by Bader, Furman and Sauer \cite{sauer:articolo} (see Section~\ref{subsec:nuova:uniforme}). We will refer to those results with our new hypothesis as \emph{generalized} Bader-Furman-Sauer's results (see again Section~\ref{subsec:nuova:uniforme}).

In the case of torsion-free \emph{non-uniform} lattices of $\po(n, 1)$, one could study rigidity via maximal representations. This successful approach was initiated by Bucher, Burger and Iozzi~\cite{bucher2:articolo}. Their techniques are based on an accurate  study of the bounded cohomology groups of $\po(n,1)$, still for $n \geq 3$. The authors first show that the volume cocycle $\Vol_n$ defined on $\bbS^{n-1}$ is bounded, alternating and $\po(n,1)$-invariant. Then, following the functorial approach to bounded cohomology developed by Burger and Monod \cite{monod:libro, burger2:articolo}, they show that $\Vol_n$ canonically determines a cohomology class $[\Vol_n]$ which generates $\hcb^n(\po(n,1);\R_\varepsilon) \cong \mathbb{R}$. Here $\R_\varepsilon$ denotes the $\po(n,1)$-module endowed with the sign action (see Section~\ref{sec:cbc} for a precise definition). The crucial idea is that one may consider the pullback of the volume class along any representation $\rho:\Gamma \rightarrow \po^\circ(n,1)$ of a torsion-free non-uniform lattice $\Gamma < \po^\circ(n,1)$. The pairing between the pullback class with the relative fundamental class of $\Gamma \backslash \mathbb{H}^n$ gives rise to a numerical invariant called \emph{volume of the representation} $\rho$. The volume is invariant with respect to the conjugacy by elements of $\po^\circ(n,1)$ and hence it provides a well-defined continuous function on the character variety $X(\Gamma,\po^\circ(n,1))$ with respect to the topology of pointwise convergence. Note that when $n =3$ this numerical invariant agrees with the one introduced by both Dunfield~\cite{dunfield:articolo} and Francaviglia~\cite{franc04:articolo} as the pullback of the volume form along any pseudo-developing map (a proof of the equivalence of these definitions was given by Kim~\cite{kim:articolo}). 

Bucher, Burger and Iozzi \cite{bucher2:articolo} also investigate maximal representations. They introduce a Milnor-Wood inequality showing that any representation $\rho \colon \Gamma \rightarrow \po^\circ(n,1)$ satisfies $\lvert \Vol(\rho)\rvert \leq \Vol(\Gamma \backslash \mathbb{H}^n)$. Hence the study of maximal representations can be translated in terms of rigidity properties of representations. Keeping the dimensional hypothesis $n \geq 3$, they obtain that maximal representations must be conjugated to the standard lattice embedding $\Gamma \rightarrow \po^\circ(n,1)$ via an element in $\po(n,1)$. This is a suitable adaptation of Mostow-Prasad rigidity to the context of representations. In a more general setting, Francaviglia and Klaff~\cite{franc06:articolo} proved some similar rigidity results for their definition of volume of a representation $\Gamma \rightarrow \po(m,1)$, this time assuming $m \geq n \geq 3$ (the rigidity of volume actually holds also at infinity, as proved by Francaviglia and the second author~\cite{savini:articolo} for the real hyperbolic lattices. Moreover, the second author also showed that the rigidity holds for complex and quaternionic lattices~\cite{savini2:articolo}). The interest in the study of volume of representations has recently grown, leading to the development of a rich literature~\cite{Pozzetti, KimKim:proc, Tholozan, farre2, farre1}.

On the other hand, one could naturally ask what happens for lattices in $\textup{PSL}(2,\R) \cong \Isom^+(\mathbb{H}^2)$. It is well-known that Mostow rigidity does not hold in the two-dimensional case (and, whence $\textup{PSL}(2,\R)$ is not taut by itself). For instance, if $\Gamma_g$ is the fundamental group of a closed surface $\Sigma_g$ of genus $g \geq 2$, we know via Teichm\"{u}ller theory that there exists a space of real dimension $6g-6$ of inequivalent discrete and faithful representations of $\Gamma_g$ into $\textup{PSL}(2,\R)$.

However, in dimension $n = 2$, working with uniform lattices, Bader, Furman and Sauer~\cite{sauer:articolo} show that $\textup{PSL}(2,\R)$ is in fact taut \emph{relatively} to the natural embedding of $\textup{PSL}(2,\R)$ into $\homs$. Their strategy is similar to the proof of the $1$-tautness of $\po^\circ(n,1)$, whence via the study of the Euler number of maximal self-couplings. 

By changing perspective and following the ideas of the study of representations, Ghys~\cite{ghys:articolo} noticed that the Euler class $\textup{e} \in \uph^2(\homs;\Z)$ is actually a bounded class and hence it determines a class~$\textup{e}_b\in \uph_b^2(\homs;\Z)$ in the bounded cohomology group. Therefore, given a representation~$\rho \colon \Gamma_g \rightarrow \homs$, one can still pullback the Euler class and define a numerical invariant via the Kronecker product. More precisely, Ghys defined the Euler number of $\rho$ as the number $\eu(\rho)=\langle \rho^*_b(\textup{e}_b),[\Sigma_g] \rangle$, where $\langle \cdot, \cdot \rangle$ denotes the Kronecker product and $[\Sigma_g]$ is the fundamental class of $\Sigma_g$. Since Ghys showed that the pullback class is a total invariant for the semiconjugacy class of $\rho$, the Euler number is constant along the semiconjugacy class of $\rho$.  Recall that $\rho_1,\rho_2:\Gamma_g \rightarrow \homs$ are \emph{semiconjugated} if there exists a degree one monotone map $f \in \homs$ such that $f(\rho_1(\gamma) \xi)=\rho_2(\gamma)f(\xi)$, for every $\gamma \in \Gamma_g$ and $\xi \in \, \mathbb{S}^1$.

One can follow Ghys' approach to study maximal representations. Indeed, after the works of Milnor~\cite{milnor:articolo} and Wood~\cite{wood}, we know that any representation $\rho$ satisfies the key estimate $|\eu(\rho)| \leq |\chi(\Sigma_g)|$. The challenging problem about the characterization of maximal representations was solved by Matsumoto in~\cite{matsumoto:articolo}. Indeed, he proved that the maximal value of $\eu$ detects the semiconjugacy class of a hyperbolization $\pi_0:\Gamma_g \rightarrow \textup{PSL}(2,\R)$. We stress the analogy between this statement and the rigidity results~\cite{bucher2:articolo, sauer:articolo} when the dimension satisfies~$n \geq 3$. Note that this correspondence is not only formal, since one may in fact investigate these problems with similar techniques. Indeed, for instance, Iozzi~\cite{iozzi02:articolo} provided a new proof of Matsumoto's theorem using bounded cohomology.

\subsection{Volume of measurable cocycles}
In this paper, we propose to introduce a numerical invariant of measurable cocycles associated to torsion-free (non-)uniform lattices in $\po^\circ(n,1)$. On the one hand, it can be considered as an extension of the \emph{generalized} Bader-Furman-Sauer's Euler number to the non-uniform case (see Section~\ref{subsec:nuova:uniforme}). On the other hand, our invariant also extends the classical volume of representations to the more general setting of measurable cocycles (both in the uniform and non-uniform cases). Indeed given a representation $\rho \colon \Gamma \rightarrow \po^\circ(n,1)$ there exists a canonical way to define an associated measurable cocycle $\sigma_\rho: \Gamma \times X \rightarrow \po^\circ(n,1)$, for any  standard Borel probability $\Gamma$-space $(X,\mu_X)$. We prove in Proposition~\ref{prop:rep:vol}, that the volume of $\rho$ agrees with our invariant of $\sigma_\rho$. For this reason, despite our invariant is inspired by Bader-Furman-Sauer's Euler number, we prefer to call it \emph{volume of measurable cocycles} associated to a (non-)uniform lattice $\Gamma < \po^\circ(n,1)$. 

Let $n \geq 3$ and let $\Gamma$ be a torsion free (non-)uniform lattice in $\po^\circ(n,1)$ and let $\sigma \colon \Gamma \times X \rightarrow \po^\circ(n,1)$ be a measurable cocycle, where $(X, \mu_X)$ is a standard Borel probability $\Gamma$-space. We study measurable cocycles that admit an essentially unique equivariant boundary map $\phi:\bbS^{n-1} \times X \rightarrow \bbS^{n-1}$ (this is the case of \emph{non-elementary measurable cocycles} as proved by Monod and Shalom \cite[Proposition 3.3]{MonShal0}). We define the \emph{volume of the cocycle} $\sigma$ via the pullback of the volume cocycle $\Vol_n$ along $\phi$ (see Definition~\ref{def:per:intro:volume}). Much more generally we show in Section~\ref{sec:lemma:tecnico} that an equivariant boundary map allows to pullback an essentially bounded $\po(n,1)$-invariant Borel cocycle to obtain suitable classes in the bounded cohomology groups $\textup{H}^\bullet_b(\Gamma;\R)$. 

The first insight into the rigidity of measurable cocycles via our volume is described by Proposition~\ref{gplus:cohomology}, where we show that the volume is in fact invariant along the $\po^\circ(n,1)$-cohomology class. Moreover, following the general philosophy of maximal representations, we prove a Milnor-Wood type inequality in order to get a stronger rigidity result for measurable cocycles. Indeed, we extend not only both the Milnor-Wood type inequalities by Bucher, Burger and Iozzi~\cite{bucher2:articolo} and by Bader, Furman and Sauer~\cite{sauer:articolo}, but we completely characterize maximal cocycles as follows:

\begin{teor}\label{mostow}
Let $\Gamma < \po^\circ(n,1)$ be a torsion-free non-uniform lattice. Let $(X,\mu_X)$ be a standard Borel probability $\Gamma$-space. Let $\sigma:\Gamma \times X \rightarrow \po^\circ(n,1)$ be a cocycle with an essentially unique boundary map $\phi:\bbS^{n-1} \times X \rightarrow \bbS^{n-1}$. Then, we have
\[
|\Vol(\sigma)| \leq \Vol(\Gamma \backslash \hyp^n) \ ,
\]
and equality holds if and only if $\sigma$ is cohomologous to the cocycle associated to the standard lattice embedding $i: \Gamma \rightarrow \po^\circ(n,1)$ via a measurable function $f \colon X \rightarrow \po(n,1)$ with essentially constant sign.  
\end{teor}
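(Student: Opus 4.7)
The plan is to split the argument into the Milnor-Wood type inequality and the rigidity characterization. For the inequality, I would interpret $\Vol(\sigma)$ as the Kronecker pairing of the pulled-back volume class $\phi^\ast[\Vol_n] \in \hb^n(\Gamma;\R)$ (which exists thanks to the general pullback machinery developed in Section~\ref{sec:lemma:tecnico}) with the relative fundamental class of $M := \Gamma \backslash \hyp^n$. Since pullback in bounded cohomology is norm non-increasing, one has $\|\phi^\ast[\Vol_n]\|_\infty \leq \|[\Vol_n]\|_\infty = v_n$, the maximal volume of a regular ideal simplex in $\hyp^n$. Combining with the Gromov--Thurston proportionality $\|[M,\partial M]\|_1 = \Vol(M)/v_n$ and standard duality between $\ell^1$-homology and bounded cohomology in the relative setting yields the upper bound $|\Vol(\sigma)| \leq \Vol(M)$.

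For the rigidity part, one direction is an immediate consequence of Proposition~\ref{gplus:cohomology} (cohomology invariance of the volume under $\po^\circ(n,1)$-conjugation, extended to account for the orientation-reversing component) combined with Proposition~\ref{prop:rep:vol} and the fact that the volume of the standard lattice embedding attains $\Vol(M)$: conjugating by an essentially-constant-sign $f$ changes $\Vol$ at most by the sign of the component of $\po(n,1)$ in which $f$ lives. The nontrivial direction is to show that maximality $|\Vol(\sigma)| = \Vol(M)$ forces $\sigma$ to be cohomologous to the cocycle induced by $i$ via such an $f$. Following the philosophy of Bucher--Burger--Iozzi and Bader--Furman--Sauer, I would first argue that the equality case, together with the explicit description of $v_n$ as supremum of $|\Vol_n|$ attained exactly on regular ideal simplices, forces the pointwise identity
\[
\Vol_n\bigl(\phi(\xi_0,x),\dots,\phi(\xi_n,x)\bigr) = \varepsilon\, \Vol_n(\xi_0,\dots,\xi_n)
\]
for almost every $(\xi_0,\dots,\xi_n,x)$, where $\varepsilon \in \{\pm 1\}$ is a fixed sign independent of the variables. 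This step passes through a careful inspection of the norm estimate together with the extremality of the volume cocycle on positively oriented regular configurations.

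The main obstacle, and the core of the argument, will be upgrading the above essential identity into the existence of a measurable $f\colon X \to \po(n,1)$ with $\phi(\xi,x) = f(x)\xi$ for a.e.\ $(\xi,x)$. For this I plan to invoke a measurable version of the classical rigidity of self-maps of $\bbS^{n-1}$ preserving (almost everywhere) the regular ideal simplicial structure; such maps are restrictions of M\"{o}bius transformations, and a measurable selection argument applied slicewise in $x$ produces the desired $f$, with $\varepsilon$ dictating the (essentially constant) component of $\po(n,1)$ in which $f$ takes values. Once $f$ is constructed, the $\Gamma$-equivariance $\phi(\gamma\xi,\gamma x) = \sigma(\gamma,x)\phi(\xi,x)$ combined with the faithfulness of the $\po(n,1)$-action on $\bbS^{n-1}$ forces the identity $f(\gamma x)\,\gamma = \sigma(\gamma,x)\,f(x)$, which is precisely the cohomology relation $\sigma(\gamma,x) = f(\gamma x)\,i(\gamma)\,f(x)^{-1}$, concluding the proof.
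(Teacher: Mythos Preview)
Your proof of the Milnor--Wood inequality via simplicial volume and the duality $|\langle\alpha,[N,\partial N]\rangle|\le\|\alpha\|_\infty\cdot\|N,\partial N\|$ is valid and is a genuine alternative to the paper's route, which instead passes through the transfer map $\trans_\Gamma^n$ and the one-dimensionality of $\hcb^n(G;\R_\varepsilon)$ to express $\Vol(\sigma)=\lambda\Vol(M)$ with $|\lambda|\le 1$ (Proposition~\ref{prop:vol:estimate:wood}). Both approaches yield the bound; yours is perhaps conceptually quicker but imports the proportionality principle, while the paper's computation of $\lambda$ is what later drives the rigidity.

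The gap is in your rigidity argument, specifically the passage from $|\Vol(\sigma)|=\Vol(M)$ to the pointwise identity $\Vol_n(\phi(\xi_0,x),\dots,\phi(\xi_n,x))=\varepsilon\,\Vol_n(\xi_0,\dots,\xi_n)$. You describe this as a ``careful inspection of the norm estimate'', but this is not enough. Equality in the Kronecker pairing tells you something about the bounded cohomology \emph{class} $[\textup{C}^n(\Phi_X)(\Vol_n)]\in\hb^n(\Gamma;\R)$, not about the specific cocycle; and even knowing the sup-norm of that cocycle equals $v_n$ would not force it to be a scalar multiple of $\Vol_n$, since the space of $\Gamma$-invariant bounded cocycles on $(\bbS^{n-1})^{n+1}$ is far from one-dimensional. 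The paper resolves this by applying the transfer map to obtain the identity of Proposition~\ref{prop:formula:volume}: the image lives among $G$-invariant cocycles, where there are no coboundaries and the cocycle space is the line $\R\cdot\Vol_n$, so the equality holds at the level of cocycles, not just classes. Evaluating that formula on a fixed regular ideal simplex gives an integral equal to $\nu_n$ with integrand bounded by $\nu_n$, forcing the integrand to be maximal almost everywhere; only then can one invoke the measurable rigidity of maps sending regular ideal simplices to regular ideal simplices. Without an analogue of Proposition~\ref{prop:formula:volume} your outline does not reach the pointwise statement, and the subsequent steps (which are otherwise correctly sketched) cannot start.
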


It would be nice to describe the family of cocycles having extremal values explicitly. To that end, we introduce the family of \emph{reducible cocycles} and we prove that they have vanishing volume (see Example~\ref{es:reducible:cocycles}). Despite the theorem above is stated for non-uniform lattices, our construction still holds in the uniform case as explained in Remark~\ref{oss:everything:uniform:case} (compare with Section~\ref{subsec:nuova:uniforme}). Working with uniform lattices and using some results available in the literature \cite{bucher2:articolo, sauer:articolo}, we can describe two families of maximal cocycles: the one arising from maximal representations (and this in fact also holds in the non-uniform case) and the one arising from ergodic \emph{integrable} self-couplings (see Corollary~\ref{cor:value:coupling}).

We briefly explain here the idea of the proof of Theorem~\ref{mostow}, which follows the scheme of a theorem by Bader, Furman and Sauer~\cite[Theorem B]{sauer:articolo} (compare also with a result by the second author~\cite[Theorem 1]{savini3:articolo}). A crucial step in the proof is given by the following proposition which expresses the volume of a measurable cocycle as a suitable multiplicative constant between bounded cohomology classes.

\begin{prop}\label{prop:formula:volume}
Let $n \geq 3$ and let $G = \Isom(\mathbb{H}^n)$. Let $\Gamma < G^+$ be a torsion-free non-uniform lattice and let $(X, \mu_X)$ be a standard Borel probability $\Gamma$-space. Let $\sigma \colon \Gamma \times X \rightarrow G^+$ be a measurable cocycle with essentially unique boundary map $\phi \colon \mathbb{S}^{n-1} \times X \rightarrow \mathbb{S}^{n-1}$. If $M = \Gamma \backslash \mathbb{H}^n$, then we have
$$\int_{\Gamma \backslash G} \int_X \varepsilon(\overline{g}^{-1}) \cdot  \Vol_n(\phi( \overline{g} \cdot \xi_0, x),\cdots,\phi(\overline{g} \cdot \xi_n,x))d\mu_X(x) d\mu (\overline{g})=\frac{\Vol(\sigma)}{\Vol(M)}\Vol_n(\xi_0, \cdots ,\xi_n) \ , \nonumber 
$$
for almost every $(\xi_0, \cdots ,\xi_n) \in (\bbS^{n-1})^{n + 1}$, where $\mu$ denotes the normalized probability measure induced on the quotient $\Gamma \backslash G$. Here $\varepsilon(\overline{g}^{-1})$ denotes the sign of the element $\overline{g}^{-1} \in \, \Gamma \backslash G$.
\end{prop}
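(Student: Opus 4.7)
The plan is to interpret both sides of the claimed identity as the cocycle-level manifestation of an equality in the one-dimensional space $\hcb^n(G;\R_\varepsilon) \cong \R$, generated by the class $[\Vol_n]$.

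First, I would unwind the definition of $\Vol(\sigma)$. By Definition~\ref{def:per:intro:volume}, one pulls back $\Vol_n$ via the essentially unique boundary map $\phi$, producing a bounded class in $\hb^n(\Gamma;\R)$, and then pairs it with the relative fundamental class $[M,\partial M]$. At the cocycle level, this pullback is realized on the Poisson boundary resolution by $(\xi_0,\ldots,\xi_n) \mapsto \int_X \Vol_n(\phi(\xi_0,x),\ldots,\phi(\xi_n,x))\, d\mu_X(x)$, i.e.\ integration over $X$ plays the role of the fiber integration implementing the pullback.

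Second, I would recall the transfer map from $\Gamma$ to $G$ with coefficients in $\R_\varepsilon$. Because $\Gamma \subset G^+$ acts trivially on $\R_\varepsilon$ while $G$ does not, the transfer operator on cocycles takes the sign-weighted form $c \mapsto \int_{\Gamma \backslash G} \varepsilon(\overline{g}^{-1})\, c(\overline{g}\cdot )\, d\mu(\overline{g})$. This is precisely the operator showing up on the LHS of the proposition. Composing pullback by $\phi$ with the transfer yields a linear map $\hcb^n(G;\R_\varepsilon) \to \hcb^n(G;\R_\varepsilon)$, so by one-dimensionality it is multiplication by a scalar $\lambda \in \R$; at the cocycle level, the LHS therefore agrees with $\lambda \cdot \Vol_n(\xi_0,\ldots,\xi_n)$ almost everywhere.

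Third, I would pin down $\lambda$ by evaluating both sides via the Kronecker pairing with the (relative) fundamental class of $M$. On the one hand, the image class in $\hcb^n(G;\R_\varepsilon) \cong \R$ pulled further back along the inclusion $\Gamma \hookrightarrow G$ pairs with $[M,\partial M]$ to give $\lambda \cdot \Vol(M)$, since $[\Vol_n]$ paired with the fundamental class of $\Gamma\backslash G$ yields $\Vol(M)$. On the other hand, by the first step the same pairing computes $\Vol(\sigma)$. Comparing gives $\lambda = \Vol(\sigma)/\Vol(M)$, which is the desired formula.

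The main obstacle is upgrading a cohomological identity in $\hcb^n(G;\R_\varepsilon)$ to the pointwise a.e.\ identity stated in the proposition. For this one has to invoke the uniqueness (up to scalar) of alternating essentially bounded $G$-invariant Borel cocycles on $(\bbS^{n-1})^{n+1}$, a consequence of the Bucher--Burger--Iozzi functorial realization of $\hcb^\bullet(G;\R_\varepsilon)$ on the Poisson boundary, which forces any coboundary correction to vanish at the level of representatives. Verifying that the sign-weighted averaging is precisely the transfer map with $\R_\varepsilon$-coefficients, together with a routine Fubini argument (legitimate because $\Vol_n$ is essentially bounded and both $\mu$ and $\mu_X$ are probability measures), closes the argument.
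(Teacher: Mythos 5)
Your proposal is correct and follows essentially the same route as the paper: the paper likewise observes (in the proof of Proposition~\ref{prop:vol:estimate:wood}) that $\trans_\Gamma^n\left[\textup{C}^n(\Phi_X)(\Vol_n)\right]=\lambda[\Vol_n]$ by one-dimensionality of $\textup{H}^n_{cb}(G;\mathbb{R}_\varepsilon)$, pins down $\lambda=\Vol(\sigma)/\Vol(M)$ via the transfer diagram and the pairing with the relative fundamental class, and then upgrades the class-level identity to the stated a.e.\ cocycle identity using the absence of coboundaries in $\textup{L}^{\infty}((\mathbb{S}^{n-1})^{n+1};\mathbb{R}_\varepsilon)^G$ in top degree. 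Your closing remarks on the uniqueness of representatives and the sign-weighted form of the transfer are exactly the ingredients the paper invokes.
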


Using the formula reported above we show that when the cocycle $\sigma$ is positively maximal, that is $\Vol(\sigma)=\Vol(\Gamma \backslash \hyp^n)$, the associated measurable map $\phi_x:\bbS^{n-1} \rightarrow \bbS^{n-1}, \phi_x(\xi):=\phi(\xi,x)$ sends almost every regular ideal tetrahedron to another regular ideal tetrahedron with the same orientation, for almost every $x \in X$. This is enough to prove that $\phi_x$ coincides essentially with an isometry in $\po^\circ(n,1)$ for almost every $x \in X$. Then, we use the previous result to construct a map $f:X \rightarrow \po^\circ(n,1)$ which is measurable by Fisher, Morris and Whyte \cite{fisher:morris:whyte} and thus it realizes the desired conjugation of our cocycle. This strategy works fine in the case of positive maximal volume and can be suitably adapted to the case of negative maximal volume. 

\subsection{Maximal cocycles and local isometries}

Thanks to Theorems \ref{mostow} we have a complete description of maximal Zimmer's cocycles. In this paper we are going to show how the study of maximal cocycles can be suitably used to characterize maps between closed hyperbolic manifolds of dimension $n \geq 3$ which are homotopic to local isometries. This characterization will be related to the well-known mapping degree theorem. A first proof of the mapping degree theorem was given by Kneser in the case of  surfaces~\cite{kneser}. He showed that given a map $f \colon \Sigma_g \rightarrow \Sigma_{g'}$ between two closed surfaces of genus $g$ and $g'$, the following bound on the mapping degree of $f$ holds 
$$
|\deg(f) | \leq \frac{\chi(\Sigma_g)}{\chi(\Sigma_{g'})} \ .
$$
Note that when $\Sigma_g$ and $\Sigma_{g'}$ are both endowed with a hyperbolic structure, by Gauss-Bonnet theorem we can substitute the Euler characteristics appearing in the estimate above with the areas of $\Sigma_g$ and $\Sigma_{g'}$. More generally the mapping degree theorem states that the latter estimate holds in any dimension. Formally, given a map $f \colon M_1 \rightarrow M_2$ between closed hyperbolic manifolds of the same dimension, we have the following bound
$$
|\deg(f)| \leq \frac{\Vol(M_1)}{\Vol(M_2)} \ .
$$
Moreover, Thurston showed that in dimension $n \geq 3$ the equality holds if and only if $f$ is homotopic to a local isometry \cite[Theorem~6.4]{Thurston}. This generalizes  Kneser's theorem~\cite{kneser} to the higher dimensional case.

There exist several different proofs of the mapping degree theorem. For instance Besson, Courtois and Gallot \cite{bcg95,bcg96,bcg98} used the notion of natural map to get a proof in the case of locally symmetric rank one closed manifolds. Their techniques were then extended by Connell and Farb to the higher rank case \cite{connellfarb1,connellfarb2}. 

A different approach via $\ell^1$-homology and simplicial volume was introduced by Thurston~\cite[Theorem~6.2.1 and Theorem~6.4]{Thurston} and Gromov~\cite[Section~0.2]{Grom82}. The simplicial volume is a homotopy invariant of compact manifolds that measures the complexity of such a manifold in terms of the $\ell^1$-norm of (real) singular chains. It can be interpreted as the geometric dual of the theory of bounded cohomology. Their approach is based on the crucial proportionality principle \cite[Theorem~6.2]{Thurston} (see also~\cite{GrMo}), which says that the simplicial volume of a closed hyperbolic manifold is proportional to the Riemannian one up to a multiplicative constant only depending on the dimension (see~\cite{Thurston, Grom82, BePe} for a detailed description on this topic). 

Recently, many results about mapping degree via simplicial volume and bounded cohomology have arised in the literature~\cite{LK:degree, Loh-Sauer, bucher2:articolo, Neof1, Neof2, FM:ideal, derbez:liu:sun:wang}. Among these applications, we are primarily interested in the works by Bucher, Burger and Iozzi \cite{bucher2:articolo} and by Derbez, Liu, Sun and Wang~\cite{derbez:liu:sun:wang}. Indeed, on the one hand Bucher, Burger and Iozzi show that one can easily reprove Thurston's theorem using the volume of representations~\cite[Corollary~1.3]{bucher2:articolo}. However, it seems that their proof is still related to simplicial volume, once they have reproved Gromov and Thurston's proportionality principle. On the other hand, Derbez, Liu, Sun and Wang prove a stronger result~\cite[Proposition~3.1]{derbez:liu:sun:wang}. In  a simplified version, given a map $f \colon M_1 \rightarrow M_2$ between to closed hyperbolic manifold of the same dimension, they define the pullback of a representation $\rho \colon \pi_1(M_1) \rightarrow \po^\circ(n,1)$. Then, the volume of the \emph{pullback representation} turns out to be proportional to the one of $\rho$ up to a multiplicative constant which agrees with the mapping degree of $f$.

In this paper, we prove the following technical result which will allow us to characterize local isometries in terms of maximal cocycles. 

\begin{prop}\label{prop:moltiplicativita}
Let $f \colon M_1 \rightarrow M_2$ be a continuous map with $\deg(f) \neq 0$ between closed hyperbolic $n$-manifolds, with $n \geq 3$. Denote by $\Gamma_1$ and $\Gamma_2$ the fundamental groups of $M_1$ and $M_2$, respectively. Consider a maximal cocycle $\sigma \colon \Gamma_2 \times X \rightarrow \po^\circ(n, 1)$. Then, we have
$$
\Vol(f^*\sigma) = \deg(f) \cdot \Vol(M_2) \ ,
$$
where $f^*\sigma$ denotes the pullback cocycle of $\sigma$ along $f$.
\end{prop}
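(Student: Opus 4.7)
The plan is to use the rigidity provided by Theorem~\ref{mostow} to reduce the claim to a statement about the representation $i \circ f_* \colon \Gamma_1 \to \po^\circ(n,1)$, and then to exploit functoriality of (bounded) cohomology together with the classical behaviour of the fundamental class under maps of nonzero degree.

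Since $\sigma$ is maximal, Theorem~\ref{mostow} provides a measurable map $u \colon X \to \po(n,1)$ of essentially constant sign such that $\sigma$ is cohomologous to the cocycle $\sigma_i$ associated to the standard embedding $i \colon \Gamma_2 \hookrightarrow \po^\circ(n,1)$. Composing with the induced homomorphism $f_* \colon \Gamma_1 \to \Gamma_2$, the same function $u$ cobounds a cohomology between $f^*\sigma$ and $f^*\sigma_i = \sigma_{i \circ f_*}$, the cocycle associated to the representation $i \circ f_*$. Then, by the cohomological invariance of the volume (Proposition~\ref{gplus:cohomology}) combined with Proposition~\ref{prop:rep:vol}, which identifies the volume of the cocycle induced by a representation with the classical volume of that representation, we obtain
\[
\Vol(f^*\sigma) = \Vol(\sigma_{i \circ f_*}) = \Vol(i \circ f_*) \ ,
\]
at least when $\sigma$ is positively maximal (so that $u$ takes values in $\po^\circ(n,1)$); the negatively maximal case is handled analogously with an overall sign change.

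To finish, we compute $\Vol(i \circ f_*)$ cohomologically. Writing the volume of a representation as the Kronecker pairing of the pulled-back bounded volume class with $[M_1]$ and using the functoriality $(i \circ f_*)^*_b = f^* \circ i^*_b$, we obtain
\[
\Vol(i \circ f_*) = \langle f^* \bigl(i^*_b[\Vol_n]\bigr), [M_1] \rangle \ .
\]
After applying the comparison map, the class $i^*_b[\Vol_n]$ corresponds to the ordinary Riemannian volume class of $M_2$: this is precisely the content of the identity $\Vol(\sigma_i) = \Vol(M_2)$, which follows from the fact that the boundary map of $\sigma_i$ is essentially the identity, so that its pullback of $\Vol_n$ recovers the hyperbolic volume form on $M_2$. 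The classical topological identity $\langle f^*\alpha, [M_1] \rangle = \deg(f) \cdot \langle \alpha, [M_2] \rangle$ for $\alpha \in \uph^n(M_2; \R)$ then yields
\[
\Vol(i \circ f_*) = \deg(f) \cdot \langle i^*_b[\Vol_n], [M_2] \rangle = \deg(f) \cdot \Vol(M_2) \ ,
\]
as required.

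The main technical obstacle I anticipate is the identification of the bounded class $i^*_b[\Vol_n]$ with the ordinary Riemannian volume class of $M_2$: one needs naturality of the comparison map $\hb^n \to \uph^n$ with respect to pullbacks, together with careful tracking of the $\R_\varepsilon$ coefficients, which thanks to $\Gamma_2 < \po^\circ(n,1)$ reduces to ordinary real coefficients. A secondary subtlety is the sign in the negatively maximal case: one has to verify that the cohomological invariance of the volume extends from $\po^\circ(n,1)$- to $\po(n,1)$-cohomologies with the expected overall sign change, which is encoded in the sign action on $\R_\varepsilon$ and should propagate through the argument without further complication.
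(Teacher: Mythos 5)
Your proof is correct, and it reaches the same two pillars as the paper's argument --- maximality reduces everything to the standard lattice embedding, after which the classical identity $\langle \textup{H}^n(f)\alpha,[M_1]\rangle = \deg(f)\langle\alpha,[M_2]\rangle$ finishes the computation --- but you implement the reduction differently. The paper does not invoke the \emph{statement} of Theorem~\ref{mostow}; it reuses the intermediate fact from its proof that the slices $\phi_x$ of the boundary map of a maximal $\sigma$ are isometries, deduces the cochain-level identity $\textup{C}^n(\Phi_X)(\Vol_n)=\Vol_n$ from the $G$-invariance of $\Vol_n$, and then observes that $\textup{C}^n(f^*\Phi_X)(\Vol_n)=\textup{C}^n(\widetilde{f})(\Vol_n)$ is the canonical representative of $\textup{H}^n_b(\pi_1(f))[\Vol_n]$. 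You instead stay at the level of cohomology classes: you use the conclusion of Theorem~\ref{mostow} to write $\sigma=\sigma_i^u$, check (correctly) that the same $u$ exhibits $f^*\sigma$ as $(\sigma_{i\circ f_*})^u$, and then quote Propositions~\ref{gplus:cohomology} and~\ref{prop:rep:vol} to land on $\Vol(i\circ f_*)$. Your route is more modular --- it uses only results already stated in the paper rather than the internals of a proof --- at the cost of needing two small verifications you only partly address: that $i\circ f_*$ is non-elementary with an essentially unique boundary map (true, since $\deg(f)\neq 0$ forces $f_*(\Gamma_1)$ to have finite index in $\Gamma_2$, and the boundary map is the $\widetilde{f}$ of \cite{burger:mozes,franc09}), and that Proposition~\ref{prop:rep:vol} applies in the uniform setting (Remark~\ref{oss:everything:uniform:case}). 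The sign issue you flag for negatively maximal cocycles is real but is shared by the paper's own proof, which implicitly takes \emph{maximal} to mean positively maximal; in the negatively maximal case both arguments yield $\Vol(f^*\sigma)=-\deg(f)\cdot\Vol(M_2)$, consistent with the absolute values used later in Proposition~\ref{prop:local:iso}.
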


This result can be interpreted as a reformulation of the result~\cite[Proposition~3.1]{derbez:liu:sun:wang} in the case of measurable cocycles. However, it turns out to encode useful information. First, it allows us to reprove the mapping theorem in dimension $n \geq 3$, as shown in Corollary \ref{cor:mapping:degree}.

More surprisingly, Thurston's strict version of the mapping degree theorem~\cite[Theorem~6.4]{Thurston} will allow us to describe maps homotopic to local isometries as the ones preserving maximal cocycles. More precisely, we are going to show that the pullback of a maximal cocycle is still maximal if and only if the map along which we are performing the pullback is homotopic to a local isometry. This is the content of the following:

\begin{prop}\label{prop:local:iso}
Let $f \colon M_1 \rightarrow M_2$ be a continuous map with $\deg(f) \neq 0$ between closed hyperbolic manifolds of the same dimension $n \geq 3$. Denote by $\Gamma_1$ and $\Gamma_2$ the fundamental groups of $M_1$ and $M_2$, respectively. Let $\sigma \colon \Gamma_2 \times X \rightarrow \po^\circ(n, 1)$ be a maximal cocycle. Then, $f$ is homotopic to a local isometry if and only if $f^*\sigma$ is a maximal cocycle.
\end{prop}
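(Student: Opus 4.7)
The plan is to combine Proposition~\ref{prop:moltiplicativita} with Thurston's strict version of the mapping degree theorem, using the Milnor–Wood inequality from Theorem~\ref{mostow} as the intermediate bridge.

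First, I would unpack what maximality means in terms of volume. Since $\sigma \colon \Gamma_2 \times X \to \po^\circ(n,1)$ is maximal, Theorem~\ref{mostow} gives $|\Vol(\sigma)| = \Vol(M_2)$. The pullback cocycle $f^*\sigma$ is a measurable cocycle $\Gamma_1 \times X \to \po^\circ(n,1)$ (where $\Gamma_1 = \pi_1(M_1)$ acts on $X$ via the homomorphism $f_\ast \colon \Gamma_1 \to \Gamma_2$ induced by $f$, up to conjugation). By Proposition~\ref{prop:moltiplicativita}, we have
\[
\Vol(f^*\sigma) = \deg(f) \cdot \Vol(M_2) \ ,
\]
hence $|\Vol(f^*\sigma)| = |\deg(f)| \cdot \Vol(M_2)$.

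Next, I would apply Theorem~\ref{mostow} to the cocycle $f^*\sigma$. Since $\deg(f) \neq 0$ the cocycle is non-degenerate in the sense needed, and one obtains that $f^*\sigma$ is maximal if and only if
\[
|\deg(f)| \cdot \Vol(M_2) = \Vol(M_1) \ , \qquad \text{i.e.,} \qquad |\deg(f)| = \frac{\Vol(M_1)}{\Vol(M_2)} \ .
\]
Here one has to be mindful that Theorem~\ref{mostow} requires the existence of an essentially unique boundary map for $f^*\sigma$; this is inherited from the boundary map of $\sigma$ through the pullback construction used in Proposition~\ref{prop:moltiplicativita}, so no obstruction arises.

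Finally, I would invoke Thurston's strict mapping degree theorem \cite[Theorem~6.4]{Thurston} (whose inequality half we have already reproved in Corollary~\ref{cor:mapping:degree}): in dimension $n \geq 3$, the equality $|\deg(f)| \cdot \Vol(M_2) = \Vol(M_1)$ holds if and only if $f$ is homotopic to a local isometry. Combining this with the equivalence in the previous paragraph yields the proposition. The main conceptual ingredient — and the step I expect to require the most care — is not any single estimate but rather checking that the hypotheses of Theorem~\ref{mostow} genuinely apply to $f^*\sigma$ (existence and essential uniqueness of a boundary map, and the fact that maximality automatically forces the pullback to be cohomologous to the standard lattice embedding when $f$ is homotopic to a local isometry). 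Once this is in place, the proof is a short chain of equivalences.
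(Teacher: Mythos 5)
Your proposal is correct and follows essentially the same route as the paper: combine Proposition~\ref{prop:moltiplicativita} (which gives $|\Vol(f^*\sigma)| = |\deg(f)|\cdot\Vol(M_2)$) with Thurston's strict mapping degree theorem to convert ``$f^*\sigma$ is maximal'' into ``$\Vol(M_1) = |\deg(f)|\cdot\Vol(M_2)$'' and back. The only cosmetic difference is that you route the equivalence through the Milnor--Wood inequality of Theorem~\ref{mostow}, whereas the paper needs nothing beyond the definition of maximality; the boundary map for $f^*\sigma$ is supplied explicitly by the pullback construction ($f^*\phi(\xi,x)=\phi(\widetilde f(\xi),x)$), so the concern you flag does not cause any obstruction.
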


\subsection{Euler number for measurable cocycles}
In the same spirit of the volume invariant, in this paper  we also provide the definition of \emph{Euler number} of measurable cocycles defined in terms of uniform lattices in the spirit of the Euler number of representations. As mentioned in Remark~\ref{oss:nuova:uniforme} (compare with Section~\ref{subsec:nuova:uniforme}), this numerical invariant coincides with the \emph{generalized} version of Bader, Furman and Sauer's Euler number~\cite{sauer:articolo} up to a multiplicative constant when we deal with $(\Gamma, \Gamma)$-couplings. One could also extend our Euler number to the case of non-compact surfaces, that is to non-uniform lattices. However, this situation seems to contain some subtleties and we prefer to postpone it to a forthcoming project.
% \cite{morsav:toledo}.

Let $\pi_0:\Gamma_g \rightarrow \textup{PSL}(2,\R)$ be a hyperbolization and assume that $\Gamma_g$ acts on $\bbS^1$ via $\pi_0$. Consider a measurable cocycle $\sigma:\Gamma_g \times X \rightarrow \homs$ with essentially unique boundary map $\phi:\bbS^1 \times X \rightarrow \bbS^1$ (one can assume that $\sigma$ is \emph{non-elementary} by \cite[Proposition 3.3]{MonShal0}). Here $(X,\mu_X)$ is again a standard Borel probability $\Gamma_g$-space. As in the case of the volume, the existence of the map $\phi$ allows us to pullback the Euler cocycle and evaluate it on the fundamental class. This construction provides our \emph{Euler number} $\eu(\sigma)$ associated to a measurable cocycle~$\sigma$. If we restrict ourselves to cocycles associated to representations, we are able to prove in Proposition~\ref{eu:representations} that our invariant coincides with the classic Euler number of representations. Moreover, our Euler number is a well-defined numerical invariant since it is constant along the $\homs$-cohomology class of measurable cocycles as proved in Proposition~\ref{homs:cohomology}. Our investigation on the relation between our Euler number of measurable cocycles and the Euler number of representations leads to a Milnor-Wood type inequality. Our approach to the study of our Euler number of measurable cocycles is substantially different from the one carried on by Bader, Furman and Sauer~\cite{sauer:articolo}, but we are still able to provide a new proof of such an inequality (compare our Proposition~\ref{Milnor:wood:style} with the \emph{generalized} version of Bader, Furman and Sauer's inequality~\cite[Corollary~4.9]{sauer:articolo}). Finally, we are able to characterize  maximal cocycles by proving an extension of Matsumoto's theorem in this setting:
\begin{teor}\label{matsumoto}
Let $\Sigma_g$ be a closed surface of genus $g \geq 2$ and let $\Gamma_g:=\pi_1(\Sigma_g)$. Let $\pi_0:\Gamma_g \rightarrow \textup{PSL}(2,\R)$ be a hyperbolization and assume that $\Gamma_g$ acts on $\bbS^1$ via $\pi_0$. Let $(X,\mu_X)$ be a standard Borel probability $\Gamma_g$-space. Then for every cocycle $\sigma:\Gamma_g \times X \rightarrow \homs$ with essentially unique boundary map~$\phi:\bbS^1 \times X \rightarrow \bbS^1$, we have

\[
|\eu(\sigma)|\leq |\chi(\Sigma_g)|
\]
and equality holds if and only if $\sigma$ is cohomologous to a cocycle induced by a hyperbolization.
\end{teor}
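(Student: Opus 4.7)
The plan is to mirror the strategy used for Theorem~\ref{mostow}, replacing the volume cocycle on $\bbS^{n-1}$ by the orientation cocycle $\orien\colon(\bbS^1)^3\to\R$ that canonically represents the bounded Euler class. The first and main step is to establish an analogue of Proposition~\ref{prop:formula:volume} expressing $\eu(\sigma)$ as a multiplicative constant between bounded cohomology classes. Concretely, by running the transfer and pullback maps $\hcb^\bullet(\homs;\R)\to\hb^\bullet(\Gamma_g;\R)$ through the appropriate boundary resolutions in the cocycle setting, and by implementing the pullback of the Euler class along $\phi$, I expect to obtain the almost everywhere identity
\begin{equation*}
\int_{\Gamma_g\backslash\textup{PSL}(2,\R)}\!\int_X \orien\!\bigl(\phi(\bar g\xi_0,x),\phi(\bar g\xi_1,x),\phi(\bar g\xi_2,x)\bigr)\,d\mu_X(x)\,d\mu(\bar g)=\frac{\eu(\sigma)}{|\chi(\Sigma_g)|}\,\orien(\xi_0,\xi_1,\xi_2)
\end{equation*}
for almost every $(\xi_0,\xi_1,\xi_2)\in(\bbS^1)^3$, where $\mu$ is the normalized measure on $\Gamma_g\backslash\textup{PSL}(2,\R)$ induced by $\pi_0$.

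The Milnor--Wood inequality then follows immediately: since $|\orien|\leq 1$ pointwise, evaluating the displayed identity on a positively oriented triple and taking absolute values gives $|\eu(\sigma)|\leq|\chi(\Sigma_g)|$. This recovers Proposition~\ref{Milnor:wood:style} and furnishes the first half of the theorem.

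For the rigidity part, assume without loss of generality that $\eu(\sigma)=|\chi(\Sigma_g)|$; the negatively maximal case is symmetric and reduces to it by composing $\phi$ with an orientation reversing involution of $\bbS^1$. Saturation of the inequality in the integral formula forces
\[
\orien\!\bigl(\phi(\bar g\xi_0,x),\phi(\bar g\xi_1,x),\phi(\bar g\xi_2,x)\bigr)=\orien(\xi_0,\xi_1,\xi_2)
\]
for almost every $(\bar g,x,\xi_0,\xi_1,\xi_2)$. By Fubini and the essential transitivity of $\textup{PSL}(2,\R)$ on positively oriented triples, this means that for $\mu_X$-a.e.\ $x\in X$ the slice $\phi_x:=\phi(\cdot,x)$ preserves the cyclic order of almost every triple on $\bbS^1$. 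A classical extension argument, in the spirit of Matsumoto~\cite{matsumoto:articolo} and Iozzi~\cite{iozzi02:articolo}, then shows that $\phi_x$ coincides almost everywhere with a unique orientation-preserving homeomorphism $f(x)\in\homs$. Setting $f\colon X\to\homs$, $x\mapsto f(x)$, and using the Fisher--Morris--Whyte measurable selection as in the proof of Theorem~\ref{mostow}, one deduces that $f$ is Borel. Combining the equivariance of $\phi$ with the essential uniqueness of the boundary extension finally yields $\sigma(\gamma,x)=f(\gamma x)\,\pi_0(\gamma)\,f(x)^{-1}$ for a.e.\ $x$ and every $\gamma\in\Gamma_g$, exhibiting $\sigma$ as cohomologous to the cocycle induced by a (possibly conjugated) hyperbolization.

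The main obstacle is the extension step. Unlike the situation in dimension $n\geq 3$, where the regular ideal tetrahedra argument pins $\phi_x$ down to be the boundary extension of an honest isometry in $\po^\circ(n,1)$, the two-dimensional orientation cocycle only records the cyclic order of triples and hence only forces $\phi_x$ to agree a.e.\ with an orientation-preserving circle homeomorphism rather than with a M\"obius transformation. As a consequence the conjugator $f$ naturally takes values in $\homs$, and one must carefully arrange the countably many null sets (one per $\gamma\in\Gamma_g$) into a single conull subset of $X$ on which the cocycle identity holds simultaneously, in order to promote the a.e.\ statement into the claimed cohomology relation with a hyperbolization.
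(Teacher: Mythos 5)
Your proposal follows essentially the same route as the paper: the multiplicative-constant formula (the paper's Proposition~\ref{prop:proportionality:eulero}, obtained from the transfer-map diagram, the one-dimensionality of $\hcb^2(\textup{PSL}(2,\R);\R)$, Gauss--Bonnet, and double ergodicity to upgrade from classes to cocycles), the Milnor--Wood bound, and then saturation forcing the slices $\phi_x$ to be order-preserving, hence a.e.\ equal to elements of $\homs$ by Iozzi's extension argument, with measurability of $x\mapsto f(x)$ supplied by Fisher--Morris--Whyte exactly as in Theorem~\ref{mostow}. The one discrepancy is the denominator in your displayed identity: the correct constant is $\eu(\sigma)/\chi(\Sigma_g)$ rather than $\eu(\sigma)/|\chi(\Sigma_g)|$ (the cocycle of the hyperbolization itself has $\eu=\chi(\Sigma_g)<0$ and identity boundary map, so the constant must equal $+1$ there); this sign is absorbed by your choice of which maximal case to call ``positive'' and does not affect the conclusion.
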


We briefly mention that our proof follows the lines of a result by Bader, Furman and Sauer~\cite[Theorem C]{sauer:articolo} (compare also with a result by the second author~\cite[Theorem 1]{savini3:articolo}). In a similar way of what happens for the volume invariant, we are able to express the Euler number of a measurable cocycle as a suitable multiplicative constant between bounded cohomology classes.

\begin{prop}\label{prop:proportionality:eulero}
Let $\Sigma_g$ be a closed surface of genus $g \geq 2$ and let $\Gamma_g=\pi_1(\Sigma_g)$. Let $(X,\mu_X)$ be a standard Borel probability $\Gamma_g$-space. Fix any hyperbolization $\pi_0:\Gamma_g \rightarrow \textup{PSL}(2,\R)$ and assume that $\Gamma_g$ acts on $\bbS^1$ via $\pi_0$. Consider a cocycle $\sigma:\Gamma_g \times X \rightarrow \homs$ with essentially unique boundary map $\phi:\bbS^1 \times X \rightarrow \bbS^1$. Then,  
\begin{equation}
\hspace{-10pt} \int_{\pi_0(\Gamma_g)\backslash G} \int_X \orien(\phi(\overline{g} \cdot \xi_0,x),\phi(\overline{g} \cdot \xi_1,x),\phi(\overline{g} \cdot \xi_2,x))d\mu_X(x) d\mu_0(\bar g)=\frac{\eu(\sigma)}{\chi(\Sigma_g)}\orien(\xi_0,\xi_1,\xi_2) \ , \nonumber
\end{equation}
for almost every $(\xi_0,\xi_1,\xi_2) \in (\bbS^1)^3$. Here, we set $G = \textup{PSL}(2,\R)$ for ease of notation and $\mu_0$ is the normalized probability measure induces by the Haar measure of $\textup{PSL}(2,\R)$ on the quotient $\pi_0(\Gamma) \backslash \textup{PSL}(2,\mathbb{R})$. The symbol $\orien$ denotes the orientation cocycle (see Equation~(\ref{eq:orien:cocycle})).
\end{prop}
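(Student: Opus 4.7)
The plan is to recognise the left-hand side, as a function of $(\xi_0,\xi_1,\xi_2)\in(\bbS^1)^3$, as a bounded $G$-invariant measurable alternating function, to deduce by a transitivity argument that it is a scalar multiple of $\orien$, and finally to identify the scalar via the Kronecker pairing with $[\Sigma_g]$. This mirrors the strategy adopted for Proposition~\ref{prop:formula:volume}.

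First I would set
\[
c(\xi_0,\xi_1,\xi_2):=\int_{\pi_0(\Gamma_g)\backslash G}\int_X \orien\bigl(\phi(\bar g\cdot\xi_0,x),\phi(\bar g\cdot\xi_1,x),\phi(\bar g\cdot\xi_2,x)\bigr)\,d\mu_X(x)\,d\mu_0(\bar g)
\]
and verify that $c$ is bounded by $1$ (since $\lvert\orien\rvert\leq 1$ and $\mu_0,\mu_X$ are probability measures), jointly Borel measurable (by Fubini applied to a jointly measurable integrand), and alternating in its three arguments (inherited from $\orien$). The key step is $G$-invariance: for $h\in G$, the change of variables $\bar g\mapsto \bar g h^{-1}$ on $\pi_0(\Gamma_g)\backslash G$, combined with the right $G$-invariance of $\mu_0$, yields $c(h\cdot\xi_0,h\cdot\xi_1,h\cdot\xi_2)=c(\xi_0,\xi_1,\xi_2)$ for almost every triple.

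Next I would use that $G=\textup{PSL}(2,\R)$ acts transitively both on positively and on negatively oriented triples of distinct points of $\bbS^1$. Any bounded alternating $G$-invariant measurable function on $(\bbS^1)^3$ is therefore constant on each of these two orbits, with opposite signs by alternation, and hence is a scalar multiple of $\orien$. This forces
\[
c=\lambda\cdot\orien
\]
almost everywhere on $(\bbS^1)^3$ for some $\lambda\in\R$.

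To pin down $\lambda$, I would interpret $c$ cohomologically: at the level of cocycles on a boundary resolution computing bounded cohomology, $c$ is precisely the image of the pullback $\phi^{\ast}\orien$ under the bounded transfer map $T_b^2\colon\hb^2(\Gamma_g;\R)\to\hcb^2(G;\R)$. Pairing the resulting bounded class with $[\Sigma_g]$ produces $\eu(\sigma)$ on one side (by the very definition of the Euler number of a measurable cocycle given earlier in the paper), and $\lambda\cdot\chi(\Sigma_g)$ on the other (since a hyperbolization attains this normalization, up to sign). Solving for $\lambda$ gives the required identity. I expect the main obstacle to be exactly this last step: one has to realise $T_b^2$ explicitly by the integral over $\pi_0(\Gamma_g)\backslash G$ at the cochain level, to verify its compatibility with the Kronecker pairing that computes $\eu(\sigma)$, and to keep careful track of signs so that $\eu(\pi_0)=\chi(\Sigma_g)$ in the fixed convention.
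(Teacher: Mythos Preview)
Your proposal is correct and follows essentially the same strategy as the paper. The paper's proof simply cites the computation already carried out in the proof of Proposition~\ref{Milnor:wood:style}, where the one-dimensionality of $\hcb^2(\textup{PSL}(2,\R);\R)$ forces $\trans_\Gamma^2[\textup{C}^2(\Phi_X)(\epsilon)]=\lambda\,\textup{e}_b$ and the commutative diagram together with Gauss--Bonnet identifies $\lambda=\eu(\sigma)/\chi(\Sigma_g)$; it then observes that the absence of alternating $G$-invariant $1$-cochains (by double ergodicity) upgrades the equality from cohomology classes to actual cocycles. Your transitivity argument on ordered triples is just a concrete, elementary realisation of this ``one-dimensional cocycle space, no coboundaries'' step, and your final identification of $\lambda$ via the Kronecker pairing is exactly the content of the diagram chase in Proposition~\ref{Milnor:wood:style} that you correctly flag as the point requiring care.
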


Using the expression above, we show that when a cocycle $\sigma$ is positively maximal, that is $\eu(\sigma)=|\chi(\Sigma_g)|$, the associated map defined by $\phi_x(\xi):=\phi(\xi,x)$ is order preserving for almost every $x \in X$, where $\xi \in \, \mathbb{S}^1$. Hence, it essentially coincides with an element $f \in \homs$, for almost every $x \in \, X$. This construction provides the desired measurable function, which conjugates the cocycle $\sigma$ with the one induced by the hyperbolization $\pi_0$. The same technique works also for negatively maximal cocycles. 

The main techniques that we develop in this paper could be extended to some other numerical invariants of representations. More precisely, in \cite{moraschini:savini:2} we study in a systematic way the theoretical setting in which one can extend numerical invariants of representations to measurable cocycles and we apply our results to the study of measurable cocycles associated to complex hyperbolic lattices. 

%%%%%%%%%%

\subsection*{Acknowledgements}

We would like to thank Roman Sauer for the enlightening discussions about the topic of this paper and his interest on it. We also wish to thank Michelle Bucher, Marc Burger, Alessandra Iozzi and Stefano Francaviglia for useful comments and corrections on the paper. We are grateful to Maria Beatrice Pozzetti for the suggestion of Remark 4.8 and for having pointed out a mistake in Corollary 5.13. We thank Clara L{\"o}h for the helpful discussions about Borel measure spaces and her careful reading of the first draft of this paper.

Finally, we are grateful to the anonymous referees for their useful comments and suggestions that we used to improve our paper. 

\subsection*{Plan of the paper}

In Section~\ref{sec:zimmer:cocycle} we recall the general theory of Zimmer's cocycles. We describe the notion of couplings and the concept of boundary maps associated to measurable cocycles. In Section~\ref{sec:bounded:cohomology} we collect all the properties about bounded cohomology that we need in the sequel. In Section~\ref{subsec:continuous:bounded:cohomology} we define the continuous (bounded) cohomology of a group. Then, in Section~\ref{sec:bm} we remind Burger-Monod's functorial approach to bounded cohomology. Section~\ref{sec:cbc} is devoted to the description of the continuous bounded cohomology of $\Isom(\mathbb{H}^n)$. Here it appears the crucial definition of volume cocycle. In Section~\ref{sec:euler:class} it follows a brief exposition of the notion of Euler class and of the definition of the orientation cocycle. Since we will need later to work with relative bounded cohomology, Section~\ref{sec:rel:coom} is dedicated to the study of its properties. Here it is described the fundamental pairing involving Kronecker product. Finally, in Section~\ref{sec:trans} the transfer maps are defined together with their properties.

Section~\ref{sec:lemma:tecnico} is mainly devoted to the proof of a fundamental technical lemma. Here, we describe how to perform the pullback of a cocycle in presence of measurable boundary map associated to a Zimmer's cocycle (see Definition~\ref{def:pullback:map}).

In Section~\ref{sec:volume:totale} we study the volume of measurable cocycles. Its definition appears in Section~\ref{sec:def:vol} (see Definition~\ref{def:per:intro:volume}). Section~\ref{subsec:nuova:uniforme} is mainly devoted to the comparison between our invariant in the uniform case and the generalized version of the Euler number defined by Bader, Furman and Sauer~\cite{sauer:articolo}. We investigate the properties of our volume in Section~\ref{sec:representations:coupling:volume} and we prove that it extends the volume of representations (see Proposition~\ref{prop:rep:vol}).  The volume is invariant on the $\Isom^+(\mathbb{H}^n)$-cohomology classes of measurable cocycle, as shown in Proposition~\ref{gplus:cohomology}. Section~\ref{sec:rigidity:volume} is dedicated to the proof of our rigidity result, Theorem~\ref{mostow}. Here we also define reducible cocycles and we show that they have vanishing volume (see Example~\ref{es:reducible:cocycles}). Two crucial results are the Milnor-Wood type inequality contained in Proposition~\ref{prop:vol:estimate:wood} and the interpretation of the volume as a multiplicative constant given in Proposition \ref{prop:formula:volume}. Finally, we conclude with Corollary~\ref{cor:value:coupling} where we show that ergodic integrable self-couplings of uniform lattices are maximal, whence they are conjugated to the cocycle associated to the standard lattice embedding.

In Section~\ref{sec:max:cocycle:mapping:degree}, we relate the volume of measurable cocycles with the degree of continuous maps between closed hyperbolic manifolds. After having introduced the notion of pullback cocycle with respect to a continuous map, we prove Proposition~\ref{prop:moltiplicativita} which relates the volume of the pullback of maximal cocycle with the degree of the continuous map. This approach furnishes a different proof of the classic mapping degree theorem in Corollary~\ref{cor:mapping:degree}. Finally, in Proposition~\ref{prop:local:iso} we characterize maps homotopic to local isometries via maximal cocycles.

Then we move to Section~\ref{sec:Euler:totale} where we study the Euler number of measurable cocycles, defined in Section~\ref{sec:def:eulero:main:prop} (see Definition~\ref{euler}). In Remark~\ref{oss:nuova:uniforme} we discuss the link between our Euler number and the generalized version of the one introduced by Bader, Furman and Sauer~\cite{sauer:articolo}. Section~\ref{sec:eul:coc:vs:repre} is devoted to the study of the relation between our Euler number and the one associated to representations. More precisely in Proposition~\ref{eu:representations} we show that our invariant extends naturally the Euler number of representations. Following the line of the volume invariant, in Proposition~\ref{homs:cohomology} it is proved that the Euler number is constant along $\homs$-cohomology classes of measurable cocycles. Finally, we conclude with Section~\ref{sec:euler:rigidity} where we prove our main Theorem~\ref{matsumoto} using both the Milnor-Wood type inequality stated in Proposition~\ref{Milnor:wood:style} and the intepretation of the Euler number as multiplicative constant given in Proposition \ref{prop:proportionality:eulero}. 

\section{Zimmer's cocycles theory}\label{sec:zimmer:cocycle}

In this section we are going to introduce some basic definitions and results about Zimmer's cocycle theory. For a detailed discussion of this subject we refer the reader to both Furstenberg~\cite{furst:articolo} and to Zimmer~\cite{zimmer:libro}. 

\begin{deft}\label{cocycle}
Let $G$ and $H$ be two locally compact groups. Let $(X,\mu)$ be a standard Borel probability space on which $G$ acts in an essentially free way by preserving the measure $\mu$. In the sequel we will refer to it simply as \emph{standard Borel probability} $G$-space. Denote by $\textup{Meas}(X,H)$ the space of measurable functions from $X$ to $H$ endowed with the topology of convergence in measure.

A measurable function $\sigma: G \times X \rightarrow H$ is a \textit{measurable cocycle} (or, simply, \emph{cocycle}) if the map $$\sigma:G \rightarrow \textup{Meas}(X,H) \ , \hspace{10pt} g \mapsto \sigma(g, \cdot)$$ is continuous and $\sigma$ satisfies the following formula
\begin{equation}\label{cocycleq}
\sigma(g_1g_2,x)=\sigma(g_1,g_2x)\sigma(g_2,x)
\end{equation}
for every $g_1,g_2$ and almost every $x \in X$.
\end{deft}

\begin{oss}
We warn the reader that in general definition of measurable cocycle does not require the additional hypothesis of essentially freeness of the action of $G$ on $(X, \mu)$. However, this assumption is not very restrictive. Indeed, we can turn every probability measure-preserving action into an essentially-free action just by taking the product with an essentially-free action and the diagonal action.

On the other hand, it seems that working with essentially-free actions is rather common in the literature about simplicial volume and its ergodic version, called integral foliated simplicial volume, see e.g.~\cite{Sauer, Sthesis, FFM,LP,FLPS,FLF,Camp-Corro,FLMQ}. For this reason, we think that it is better to keep the same setting here, when we deal with ergodic theory applied to bounded cohomology, which is the dual theory of simplicial volume (notice that we are \emph{not} claiming, that our theory is the dual theory of integral foliated simplicial volume).

Moreover, it is worth noticing that having an essentially-free action has a nice implication on the standard Borel probability space $(X, \mu)$. Indeed, if we have an infinite torsion-free group $G$ acting on $(X, \mu)$ in a measure preserving way, then the measure $\mu$ cannot have atoms. This implies that the standard Borel probability space $(X, \mu)$ is measurably isomorphic to the interval $[0,1]$ endowed with the Lebesgue measure~\cite[Theorem~A.20]{kerr-li} (but, then the action is highly non-standard).
\end{oss}

At a first sight Equation (\ref{cocycleq}) may appear quite mysterious, but it is just a suitable extension of the ordinary chain rule for derivates to this more general context. We mention here another equivalent approach for defining measurable cocycles. Since $\sigma \in \, \textup{Meas}(G,\textup{Meas}(X,H))$, Equation (\ref{cocycleq}) is the characterization of Borel $1$-cocycles with values in the $G$-module $\textup{Meas}(X,H)$ in the sense of  Eilenberg-MacLane (see \cite{feldman:moore, Zimmer:preprint} for a similar description). Following this latter approach to measurable cocycles, we introduce the notion of cohomologous cocycles. 

\begin{deft}\label{def:coomologhi:cicli}
Let $\sigma_1,\sigma_2: G \times X \rightarrow H$ be two measurable cocycles and let $f:X \rightarrow H$ be a measurable map. We denote by
$$
\sigma_1^f \colon G \times X \rightarrow H
$$
the cocycle satisfying
$$
\sigma_1^f (g,x)=f(g x)^{-1} \sigma_1(g,x) f(x)\ ,
$$
for all $g \in G$ and almost every $x \in X$. We say that $\sigma_1$ and $\sigma_2$ are \textit{cohomologous} (or, \emph{equivalent}) if $\sigma_2=\sigma_1^f$, for some measurable map $f \colon X \rightarrow H$. 
\end{deft} 

In order to make the reader more familiar with the notion of measurable cocycles, we are going now to discuss in details a couple of families of them. We mention that beyond our explicit examples, measurable cocycles are quite ubiquitous in mathematics. For instance, they naturally appear in differential geometry (as differentiation cocycles) and in measure theory (as Radon-Nykodim cocycles) We refer the reader to~\cite[Examples~4.2.3 and 4.2.4]{zimmer:libro} for an overview of such examples. However, along this paper we will be primarily interested in cocycles arising from either representation or self-couplings.

We begin by introducing cocycles associated to \emph{representations}.

\begin{deft}\label{cocyclerep}
Let $\rho:G \rightarrow H$ be a continuous representation. Fix any standard Borel probability $G$-space $(X,\mu)$. The cocycle  \textit{associated to the representation} $\rho$ 
$$\sigma_\rho \colon G \times X \rightarrow H$$
is given by
\[
\sigma_\rho: G \times X \rightarrow H, \hspace{10pt} \sigma_\rho(g,x):=\rho(g) \ ,
\]
for every $g \in G$ and almost every $x \in X$. 
\end{deft}

We warn the reader that the cocycle $\sigma_\rho$ just introduced depends actually both on $\rho$ and on $X$. However, since the condition on $X$ is not significant, we drop the $X$ from the notation. Notice that when $G$ is discrete any representation is automatically continuous and hence it admits an associated cocycle.

The previous definition provides a large family of measurable cocycles and it shows that the theory of representations sits inside the much wider framework of Zimmer's cocycles theory.

We now introduce another example of cocycles which arises from the study of \emph{self-couplings}. We recall here the definition of coupling (compare with~\cite[Definition 1.1]{sauer:articolo}):

\begin{deft}\label{deft:coupling}
Let $G$ and $H$ be two locally compact second countable groups with Haar measures $m_G$ and $m_H$, respectively. We will assume additionally that both $G$ and $H$ are unimodular. A \textit{coupling} for $G$ and $H$ is the datum of a Lebesgue measure space $(\Omega,m_\Omega)$ with a $G \times H$-measurable, measure-preserving action such that there exist two finite measure spaces $(X,\mu)$ and $(Y,\nu)$ with measurable isomorphisms
\[
\imath:(G,m_G) \times (Y,\nu) \rightarrow (\Omega,m_\Omega), \hspace{10pt} \jmath:(H,m_H) \times (X,\mu) \rightarrow (\Omega,m_\Omega)\ ,
\]
where $\imath$ is $G$-equivariant and $\jmath$ is $H$-equivariant, both with respect to the natural actions on the first factor. If there exists a coupling $\Omega$ for two groups $G$ and $H$, we say that $G$ and $H$ are \textit{measure equivalent}. 
\end{deft}

Given a coupling $(\Omega,m_\Omega)$ of two groups $G$ and $H$ as above, one can construct two different cocycles associated to it. By the commutativity of the actions of $G$ and $H$ on $\Omega$, we have a well-defined action of $G$ on the space $X$. Indeed, $X$ can be naturally identified with the space of $H$-orbits in $\Omega$. Similarly, we obtain an action of $H$ on the space $Y$. Both actions are measure preserving. For any $g \in G$ and almost every $h \in H$ and $x \in X$, there must exist $h_1 \in H$ which depends on both $g$ and $x$ such that 
\[
g\jmath(h,x)=\jmath(hh_1^{-1},g.x) \ ,
\]
where $g.x$ denotes the action of $g$ on $X$ in order to distinguish it from the one on $\Omega$. Since $h_1$ only depends on both $g$ and $x$, the previous formula leads to the definition of cocycles associated to a coupling.

\begin{deft}\label{couplings}
Let $(G,m_G)$ and $(H,m_H)$ be two locally compact, unimodular, second countable groups with their respective Haar measures. Let $(\Omega,m_\Omega)$ be a coupling for $G$ and $H$. The \textit{right measure equivalence cocycle} associated to the coupling is defined by
\[
\alpha_\Omega: G \times X \rightarrow H, \hspace{5pt} \alpha_\Omega(g,x):=h_1 \ .
\]
Note that $\alpha_\Omega(g,x)$ satisfies 
\[
g\jmath(h,x)=\jmath(h\alpha_\Omega(g,x)^{-1},g.x)
\]
for every $g \in G$ and almost every $x \in X$ and $h \in H$. 
\end{deft}

By interchanging the role of $G$ and $H$, one can define the \textit{left measure equivalence cocycle} 
\[
\beta_\Omega:H \times Y \rightarrow G \ ,
\]
as
\[
h\imath(g,y)=\imath(\beta_\Omega(h,y)g,h.y) \ ,
\]
for every $h \in H$ and almost every $g \in G$ and $y \in Y$. The left (respectively, right) measurable cocycle encodes the information about the action of $G \times H$ on $(\Omega,m_\Omega)$ when we identify it with $(G,m_G) \times (Y,\nu)$ (respectively, $(H,m_H) \times (X,\mu)$). 

After having introduced some examples of cocycles, we now move to the fundamental notion of \emph{boundary map} in the sense of Furstenberg~\cite{furst:articolo}. Assume that both $G$ and $H$ admit a Furstenberg-Poisson boundary (see~\cite{furstenberg:annals} for a precise definition). For the convenience of the reader we mention here an example of Furstenberg-Poisson boundary that we will need in the sequel. Given a Lie group $G$ of non-compact type, the Furstenberg-Poisson boundary can be naturally identified with the quotient $G/P$, where $P$ is any minimal parabolic subgroup of $G$. More generally, any lattice $\Gamma < G$ admits the previous quotient $G/P$ as a natural Furstenberg-Poisson boundary. Denote by $B(G)$ (respectively, by $B(H)$) the Furstenberg-Poisson boundary associated to $G$ (respectively, $H$). Endow both boundaries with their natural Borel sigma algebras induced by the Haar sigma algebras on their respective groups. 

\begin{deft}\label{boundary}
Let $\sigma:G \times X \rightarrow H$ be a measurable cocycle. We say that a measurable map
$\phi:B(G) \times X \rightarrow B(H)$
is $\sigma$-\emph{equivariant} if 
\[
\phi(g\xi,gx)=\sigma(g,x)\phi(\xi,x) \ ,
\]
for every $g \in G$ and almost every $\xi \in B(G)$ and $x \in X$. 

A \textit{boundary map} for $\sigma$ is a $\sigma$-equivariant measurable map $\phi$.
\end{deft}

\begin{oss}\label{oss:boundary:genmap}
In the definition above we assumed that both $G$ and $H$ admit a Furstenberg-Poisson boundary. However in the sequel we will need a slight more general notion of boundary map.
Indeed we will work with the group $\homs$ which is not a Lie group when endowed with the discrete topology. To overcome this difficulty it will be sufficient to consider a suitable compact space on which the group acts measurably. More precisely, consider $G$ and $H$ two locally compact groups. Assume that $G$ admits a Furstenberg-Poisson boundary $B(G)$ and that $H$ acts measurably on a compact completely metrizable space $Y$. A \emph{generalized boundary map} is a measurable map $$\phi:B(G) \times X \rightarrow Y$$ which is a $\sigma$-equivariant, that is 
\[
\phi(g\xi,gx)=\sigma(g,x)\phi(\xi,x) \ ,
\]
for every $g \in G$ and almost every $\xi \in B(G)$ and $x \in X$. 
\end{oss}

The existence and the uniqueness of a boundary map are both strictly related with the properties of the cocycle $\sigma$. For instance, any \emph{proximal} cocycle admits an essentially unique boundary map. We refer the reader to~\cite{furst:articolo} for a discussion on that property. 

Since boundary maps are defined in terms of measurable cocycles, it is natural to describe how they vary along a cohomology class.

\begin{deft}\label{boundary:map:cohomology}
Let $\sigma: G \times X \rightarrow H$ be a measurable cocycle and assume it admits a boundary map $\phi:B(G) \times X \rightarrow B(H)$. Let $f: X \rightarrow H$ be a measurable map. 
The \emph{boundary map associated to the twisted cocycle} $\sigma^f$
$$
\phi^f: B(G) \times X \rightarrow B(H)
$$
is the measurable map defined by
$$
 \phi^f(\xi,x):=f(x)^{-1}\phi(\xi,x)
$$
for almost every $\xi \in B(G)$ and $x \in X$. 
\end{deft}

An easy computation shows that the map $\phi^f$ is indeed $\sigma^f$-equivariant with respect to the cocycle $\sigma^f$. The same definition will hold also in the case of generalized boundary maps. 

\section{Bounded cohomology and first results}\label{sec:bounded:cohomology}

\subsection{Continuous (bounded) cohomology}\label{subsec:continuous:bounded:cohomology}
%\subsection{Burger-Monod's theory of continuous bounded cohomology}\label{sec:bm}
Let $G$ be a locally compact group. In this section we recall the definition of the \emph{continuous bounded cohomology} of $G$ and in the next section will show how to compute it via Burger-Monod's functorial approach~\cite{burger2:articolo} (see also \cite{monod:libro}). It is worth mentioning that some equivalent results in the special case of discrete groups date back to Ivanov~\cite{Ivanov} (see also~\cite{miolibro}). We should also warn the reader that the next two sections are rather technical. Moreover, most of the technical definitions are only needed for stating the main classic results of Section~\ref{sec:bm} and we will not use them anymore in the sequel. Nevertheless, we decided to write this self-contained sections for sake of completeness in order to offer to the reader a complete toolbox for a better comprehension the paper. 

\begin{deft}
Let $G$ be a locally compact group. Let $E$ be a Banach space and let $\pi:G \rightarrow \Isom(E)$ be a representation. The pair $(E,\pi)$ is called \emph{Banach $G$-module}, and it can be alternatively thought of as an action of $G$ on $E$ via linear isometries
\begin{align*}
\theta_\pi \colon G \times &E \rightarrow E \\
\theta_\pi(g,v) &\coloneqq \pi(g)v \ .
\end{align*}
We say that the Banach $G$-module is \emph{continuous} if for all $v \in \, E$ the orbit map 
\begin{align*}
G &\rightarrow E \\
g &\mapsto \pi(g) v \ 
\end{align*}
is continuous at $e \in \, G$, where $e$ denotes the neutral element of $G$.

Given two (continuous) Banach $G$-modules $(E, \pi_E)$ and $(F, \pi_F)$, a $G$-\emph{morphism} $\varphi \colon E \rightarrow F$ is a linear 
$G$-equivariant map between Banach $G$-spaces, i.e. $$\varphi( \pi_E(g) v) = \pi_F(g) \varphi(v)$$ for all $g \in \, G$ and $v \in \, E$.

For every (continuous) Banach $G$-module $(E, \pi)$, we define the submodule of $G$-\emph{invariants} $E^G$ as
$$
E^G \coloneqq \{v \in \, E \, | \, \pi(g)v = v, \, \forall g \in \, G \} \ .
$$
\end{deft}

\begin{oss}\label{oss:caso:discreto:continuo:non:continuo:coincidono}
Notice that if we endow $G$ with the discrete topology, then there is no difference between Banach $G$-modules and \emph{continuous} Banach $G$-modules. In this paper the main example of this situation is given by lattices in $\textup{Isom}(\mathbb{H}^n)$, $n \geq 2$.
\end{oss}

Since not continuous actions may reveal quite annoying, sometimes it can be convenient to study the continuous submodule of a given Banach $G$-module:

\begin{deft}
Let $G$ be a locally compact group and let $(E, \pi)$ be a Banach $G$-module. We define the \emph{maximal continuous submodule} of $E$ as follows
$$
\mathcal{C}E:=\{v \in E \, | \, G \rightarrow E, \ , g \mapsto \pi(g)v \, \mbox{ is continuous at $e \in \, G$}\} \ ,
$$
where $e \in G$ is the neutral element.
\end{deft}

\begin{oss}
Note that any $G$-morphism between Banach $G$-modules restricts to the maximal continuous submodules~\cite[Lemma~1.2.6]{monod:libro}.
\end{oss}

For the convenience of the reader we list now some standard examples that will be needed in the sequel:
\begin{es}\label{es:Banach:G:modules}
Let $G$ be a locally compact group:
\begin{enumerate}
\item We say that $(\mathbb{R},\pi)$ has the \emph{trivial} Banach $G$-module structure if $\pi$ is \emph{trivial}, i.e.  if $\pi(g) = \textup{id}_\mathbb{R}$ for all $g \in \, G$.

\item Let $(E, \pi)$ be a Banach $G$-module. Let us consider the Banach space of \emph{continuous $E$-valued functions} on $G^{\bullet+1}$, that is
$$
\textup{C}^\bullet_c(G; E) \coloneqq \{f \colon G^{\bullet + 1} \rightarrow E \ | \ \textup{$f$ is continuous} \}
$$
endowed with the supremum norm
$$
\lVert f \rVert_\infty \coloneqq \sup_{g_0, \cdots, g_\bullet} \lVert f(g_0, \cdots, g_\bullet) \rVert_E \ ,
$$
where $\lVert \ \cdot \ \rVert_E$ denotes the norm on $E$. The pair $(\textup{C}^\bullet_c(G; E), \tau)$ is a Banach $G$-module with the action defined by
$$
(\tau(g) \cdot f)(g_0, \cdots, g_\bullet) \coloneqq \pi(g) f(g^{-1}g_0, \cdots, g^{-1}g_\bullet) \ ,
$$
for all $g, g_0, \cdots, g_\bullet \in \, G$ and $f \in \textup{C}^\bullet_c(G; E)$.

\item The previous Banach $G$-module contains an important Banach $G$-submodule, the space of \emph{continuous bounded $E$-valued functions} on $G^{\bullet+1}$, namely
$$
\textup{C}_{cb}^\bullet(G; E) \coloneqq \{f \in \, \textup{C}^\bullet_c(G; E) \, | \, \lVert f \rVert_\infty < +\infty \} \ ,
$$
endowed with the restriction of the previous $G$-action $\tau$. 

\item Given a Banach $G$-module $(E, \pi)$, suppose that $E$ is the dual of some Banach space. In this case $E$ admits a natural weak-$^*$ topology and an associated Borel weak-$^*$ structure. Consider a standard Borel probability space $(X,\mu)$ on which $G$ acts measurably by preserving the measure class of $\mu$. Let us consider the Banach space of \emph{bounded \mbox{weak-$^*$} measurable $E$-valued functions} on $X^{\bullet+1}$:
\begin{align*}
\mathcal{B}^\infty(X^{\bullet+1};E):=\{ f:X^{\bullet+1} \rightarrow E| &\textup{$f$ is weak-$^*$ measurable}, \\
\sup_{x_0,\cdots,x_\bullet \in X} \lVert &f(x_0,\cdots,x_\bullet) \rVert_E < \infty \} \ ,
\end{align*}
where $\lVert \ \cdot \ \rVert_E$ denotes the norm on $E$. 

We endow it with the structure of Banach $G$-module $(\mathcal{B}^\infty(X^{\bullet+1};E), \tau)$ given by
\[
(\tau(g) \cdot f)(x_0,\cdots,x_\bullet):=\pi(g)f(g^{-1}x_0,\cdots,g^{-1}x_\bullet) \ ,
\]
for every $g \in G$, for every $x_0, \cdots, x_\bullet \in X$ and for every $f \in \mathcal{B}^\infty(X^{\bullet+1}; E)$. 

\item In the same setting of the previous case, let us consider the Banach space of \emph{equivalence classes of bounded weak-$^*$ measurable $E$-valued functions} on $X^{\bullet+1}$:
$$
\textup{L}^\infty_{\textup{w}^*}(X^{\bullet+1};E) \coloneqq \{ [f]_\sim \ | \ f \in \, \mathcal{B}^\infty(X^{\bullet+1};E)\} \ ,
$$
where $f \sim g$ if and only if they agree $\mu$-almost everywhere and $[f]_\sim$ denotes the equivalence class of $f$ with respecto to $\sim$. 

We endow it with the structure of Banach $G$-module $(\textup{L}^\infty_{\textup{w}^*}(X^{\bullet+1};E), \tau)$ given by
$$
(\tau(g) \cdot f)(\xi_0,\cdots,\xi_\bullet):=\pi(g)f(g^{-1}\xi_0,\cdots,g^{-1}\xi_\bullet) \ ,
$$
for every $g \in \, G$, for every $x_0, \cdots, x_\bullet \in X$ and for every $f \in \textup{L}^\infty_{\textup{w}^*}(X^{\bullet+1}; E)$. Notice that we defined the action $\tau$ working directly with representatives of the equivalence classes, dropping the parenthesis to avoid a heavy notation. We will do the same thing every time that we will need to work with such spaces. 
\end{enumerate}
\end{es}

\begin{oss}\label{oss:cocatene:cont:uguale:non:cont:discreto}
As in Remark~\ref{oss:caso:discreto:continuo:non:continuo:coincidono}, we stress again the fact that if $G$ is a discrete group, then every function $f:G^{\bullet+1} \rightarrow E$ is automatically continuous and hence we can drop the subscript \emph{c} in $\textup{C}_c^\bullet(G;E)$. Indeed when $G$ is discrete we are going to write $\textup{C}^\bullet(G;E)$. We will do the same thing writing $\textup{C}^\bullet_b(G;E)$ instead of $\textup{C}^\bullet_{cb}(G;E)$.  
\end{oss}

We are now ready to introduce the definition of \emph{continuous (bounded) cohomology}. Let $G$ be a locally compact group and let $(E, \pi)$ be a Banach $G$-module. Then, the Banach $G$-modules of continuous (bounded) functions on $G$ fit inside the following cochain complex
$$
(\textup{C}_{c (b)}^\bullet(G; E),  \delta^\bullet) \ ,
$$
where $\delta^\bullet$ is the standard homogeneous coboundary operator defined by 
$$
\delta^\bullet:\textup{C}^\bullet_{c(b)}(G;E) \rightarrow \textup{C}^{\bullet+1}_{c(b)}(G;E) \ ,
$$
\begin{equation}\label{eq:coboundary:standard}
\delta^\bullet f(g_0, \cdots, g_{\bullet +1}) = \sum_{i = 0}^{\bullet+1} (-1)^i f(g_{0}, \cdots, g_{i -1}, g_{i+1}, \cdots, g_{\bullet+1}) \ ,
\end{equation}
for every $g_0, \cdots, g_{\bullet+1} \in \, G$. By linearity, it is easy to check that $\delta^\bullet$ sends bounded cochains to bounded cochains and the same holds for $G$-invariant cochains. This allows us to restrict the coboundary operator to the subcomplex of $G$-invariant cochains $(\textup{C}_{c (b)}^\bullet(G; E)^G,  \delta^\bullet)$.

\begin{deft}\label{def:cont:bdd:coho:sezione:isom}
The \emph{continuous (bounded) cohomology} of $G$ with coefficients in $E$, denoted by $\textup{H}^\bullet_{c (b)}(G; E)$, is the cohomology of the complex $(\textup{C}_{c (b)}^\bullet(G; E)^G;  \delta^\bullet)$.
\end{deft}

\begin{oss}\label{oss:cont:(bounded):coom:per:discreti:uguale}
Notice that if $G$ is discrete, then there is no difference between continuous and non-continuous (bounded) cohomology. For this reason, we will drop the $c$-subscript from the notation in the case of discrete groups (compare with Remarks~\ref{oss:caso:discreto:continuo:non:continuo:coincidono} and~\ref{oss:cocatene:cont:uguale:non:cont:discreto}).
\end{oss}

A key feature of continuous bounded cohomology is that it is endowed with a natural seminorm. Indeed, the supremum norm on continuous bounded cochains induces a seminorm in continuous bounded cohomology as follows
$$
\lVert \psi \rVert_\infty \coloneqq \inf \{ \lVert f \rVert_\infty \, | \, f \in \, \textup{C}_{cb}^\bullet(G; E)^G \mbox{ and } [f] = \psi \} \ ,
$$
where  $\psi \in \, \textup{H}_{cb}^\bullet(G; E)$.

The gap between the continuous and the continuous bounded cohomology groups may be studied via the comparison map. More precisely, the inclusion of cochain complexes
$$
\iota \colon\textup{C}_{c b}^\bullet(G; E)^G \rightarrow \textup{C}_{c}^\bullet(G; E)^G \ ,
$$
induces a natural map
$$
\comp^\bullet \colon \textup{H}_{c  b}^\bullet(G; E) \rightarrow \textup{H}_{c}^\bullet(G; E) \ ,
$$
which we call \emph{comparison map}. For the convenience of the reader, we will sometimes denote it by $\comp^\bullet_G$.

\subsection{Burger-Monod's theory of continuous bounded cohomology}\label{sec:bm}

Since computing continuous (bounded) cohomology by means of its definition is usually very hard, we explain now how to compute it in terms of \emph{strong resolutions by relatively injective modules}. We begin with the following:

\begin{deft}
Let $G$ be a locally compact group and let $E, F$ be a Banach $G$-modules. A $G$-morphism $i \colon E \rightarrow F$ is said to be \emph{admissible} if there is a morphism of Banach spaces $\sigma \colon F \rightarrow E$ with bounded operator norm $\lVert \sigma \rVert \leq 1$ and such that $i \circ \sigma \circ i = i$.

We say that a Banach $G$-module $U$ is \emph{relatively injective} if for every admissible $G$-morphism $i \colon E \rightarrow F$ of continuous Banach $G$-modules and every $G$-morphism $\psi \colon E \rightarrow U$, there exists a $G$-morphism $\varphi \colon F \rightarrow U$ satisfying $\varphi \circ i = \psi$ and $\lVert \varphi \rVert \leq \lVert \psi \rVert$.
\end{deft}
\begin{oss}
Notice that a Banach $G$-module is relatively injective if and only if $\mathcal{C}E$ is so, by~\cite[~4.1.5]{monod:libro}.
\end{oss}
\begin{es}\label{es:relatively:injective}
Let $G$ be a locally compact group. The following Banach $G$-modules introduced in Example~\ref{es:Banach:G:modules} are examples of relatively injective Banach $G$-modules:
\begin{enumerate}
\item For every Banach $G$-module $E$, the spaces of continuous bounded functions $C^\bullet_{cb}(G; E)$ are relatively injective~\cite[Proposition~4.4.1]{monod:libro}.

\item Assume that $G$ is a semisimple Lie group of non-compact type (or one of its lattices). Let $B(G)$ denote its Furstenberg-Poisson boundary. Then, for every Banach $G$-module $E$ which is dual of some Banach space, we have that  $\textup{L}^\infty_{\textup{w}^*}(B(G)^{\bullet+1};E)$ is relatively injective~\cite[Theorem~5.7.1]{monod:libro}. Indeed $B(G)$ is an \emph{amenable $G$-space} in the sense of Zimmer \cite[Definition 5.3.1]{monod:libro} since it can be realized as the quotient $G/P$, where $P$ is a \emph{minimal parabolic subgroup} (which is amenable being a compact extension of a solvable group). Notice that the amenability is preserved by restricting the action to any lattice of $G$.  
\end{enumerate} 
\end{es}

We now introduce the definition of \emph{strong resolution}.

\begin{deft}
Let $G$ be a locally compact group. A \emph{Banach $G$-complex} $(E^\bullet, \delta^\bullet)$ is a cochain complex, i.e. $\delta^n \circ \delta^{n-1}$ for all $n \in \, \mathbb{N}$, whose modules are Banach $G$-modules and whose differentials are bounded $G$-morphisms in every degree. Moreover, we assume that $E^n = 0$ for every $n \leq -1$. 

Given a Banach $G$-module $E$, an \emph{augmented Banach $G$-complex} $(E, E^\bullet, \delta^\bullet)$ with augmentation map $\varepsilon \colon E \rightarrow E^0$ is the following $G$-complex
$$
0 \rightarrow E \xrightarrow{\varepsilon} E^0 \xrightarrow{\delta^0} E^1 \xrightarrow{\delta^1} E^2 \rightarrow \cdots \rightarrow E^n \xrightarrow{\delta^n} \cdots \ ,
$$
where we ask that $\varepsilon$ is an isometric embedding.

A \emph{resolution} of a Banach $G$-module $E$ is an exact augmented Banach $G$-complex $(E, E^\bullet, \delta^\bullet)$. 

A \emph{contracting homotopy} for a resolution $(E^\bullet, \delta^\bullet)$ of $E$ is a collection of maps $h^n \colon E^n \rightarrow E^{n-1}$ for every $n \in \, \mathbb{N}$ such that $\lVert h^n \rVert \leq 1$ and $h^{n+1} \circ \delta^{n} + \delta^{n-1} \circ h^{n} = \textup{id}_{E^n}$ for $n \geq 0$ and $h^0 \circ \varepsilon = \textup{id}_{E^0}$.

A resolution $(E^\bullet, \delta^\bullet)$ of $E$ is \emph{strong} if the subcomplex $(\mathcal{C}E^\bullet, \delta^\bullet)$ admits a contracting homotopy.
\end{deft} 

\begin{es}
Let $G$ be a locally compact group. The followings are examples of strong resolutions by relatively injective modules, i.e.  strong resolutions $(E^\bullet, \delta^\bullet)$ such that each $E^\bullet$ is a relatively injective $G$-module.
\begin{itemize}
\item Let $E$ be a Banach $G$-module. The resolution $(C_{cb}^\bullet(G; E), \delta^\bullet)$ with augmention map $\varepsilon: E \rightarrow C^0(G; E)$ given by the inclusion of constant functions is a strong resolution of $E$ via relatively injective modules~\cite[Theorem~7.2.3]{monod:libro} (compare with Example~\ref{es:relatively:injective}). Here $\delta^\bullet$ denotes the coboundary operator introduced in Equation~($\ref{eq:coboundary:standard}$)

\item Assume that $G$ is a semisimple Lie group of non-compact type (or one of its lattices). Let $B(G)$ denote its Furstenberg-Poisson boundary and let $E$ be a Banach $G$-module dual to some Banach space. Then,
$$
0 \rightarrow E \xrightarrow{\varepsilon} \textup{L}^\infty_{\textup{w}^*}(B(G);E) \xrightarrow{\delta^0} \textup{L}^\infty_{\textup{w}^*}(B(G)^{2};E) \xrightarrow{\delta^1} \cdots 
$$
is a strong resolution of $E$ by relatively injective $G$-modules~\cite[Theorem~7.5.3]{monod:libro}. Here $\varepsilon \colon E \rightarrow \textup{L}^\infty_{\textup{w}^*}(B(G);E)$ denotes the inclusion of constant functions and
$$
\delta^\bullet \colon \textup{L}^\infty_{\textup{w}^*}(B(G)^{\bullet+1};E) \rightarrow \textup{L}^\infty_{\textup{w}^*}(B(G)^{\bullet+2};E) \ ,
$$
\begin{equation}\label{eq:coboundary:essentially:bounded}
\delta^\bullet(f)(\xi_0,\cdots,\xi_{\bullet+1})=\sum_{i=0}^{\bullet+1} (-1)^i f(\xi_0,\cdots,\xi_{i-1},\xi_{i+1},\cdots,\xi_{\bullet+1}) \ ,
\end{equation}
for every $f \in \, \textup{L}^\infty_{\textup{w}^*}(B(G)^{\bullet+1};E)$ and $\xi_0,\cdots,\xi_{\bullet+1} \in \, B(G)$.
\end{itemize}
\end{es}
The importance of strong resolutions by relatively injective modules is described by the following results:
\begin{teor}[{\cite[Theorem~7.2.1]{monod:libro}}]\label{teor:monod:resolution:solo:iso}
Let $G$ be a locally compact group and let $E$ be a Banach $G$-module. Then, for every strong resolution $(E^\bullet, \delta^\bullet)$ of $E$ by relatively injective modules, the cohomology of the complex of $G$-invariants $((E^\bullet)^G, \delta^\bullet)$ is isomorphic as topological vector space to the continuous bounded cohomology $\textup{H}_{cb}^n(G; E)$ for every $n \geq 0$.
\end{teor}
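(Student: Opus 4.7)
The plan is to deduce the isomorphism from the general comparison principle for strong resolutions by relatively injective Banach $G$-modules. Since the standard resolution $(\ccb^\bullet(G;E),\delta^\bullet)$ is itself a strong resolution of $E$ by relatively injective modules, and by Definition~\ref{def:cont:bdd:coho:sezione:isom} its complex of $G$-invariants computes $\hcb^\bullet(G;E)$, it will suffice to construct a canonical topological isomorphism between the cohomology of $((E^\bullet)^G,\delta^\bullet)$ and that of $((\ccb^\bullet(G;E))^G,\delta^\bullet)$.

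First I would prove an \emph{extension lemma}: given a strong resolution $(A^\bullet,d_A^\bullet)$ of $E$ and any resolution $(B^\bullet,d_B^\bullet)$ of $E$ by relatively injective Banach $G$-modules, every $G$-morphism $\varphi\colon E\to E$ extends to a $G$-chain map $\varphi^\bullet\colon A^\bullet\to B^\bullet$ satisfying $\|\varphi^n\|\leq\|\varphi\|$ in each degree. The construction is inductive. The contracting homotopy on $(\mathcal{C}A^\bullet,d_A^\bullet)$ supplied by the strongness hypothesis yields a norm-$\leq 1$ splitting of the augmentation $\varepsilon_A\colon E\hookrightarrow A^0$ when restricted to the maximal continuous submodule; thus $\varepsilon_A$ becomes admissible as a morphism of continuous Banach $G$-modules, and relative injectivity of $B^0$ lifts the composition $\varepsilon_B\circ\varphi$ along $\varepsilon_A$ to produce $\varphi^0$. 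At each higher step, the same contracting homotopy renders $d_A^{n-1}$ admissible on $\mathcal{C}A^\bullet$, and relative injectivity of $B^n$ furnishes $\varphi^n$.

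Second, the same inductive scheme, applied to the difference of two extensions of the same $\varphi$, produces a $G$-equivariant chain homotopy between them on $\mathcal{C}A^\bullet$. Since every $G$-invariant vector automatically lies in the maximal continuous submodule, this homotopy restricts to the subcomplexes of $G$-invariants, so any two such extensions induce the same map on cohomology after passing to $G$-invariants.

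Combining the two ingredients, applying the extension lemma to $\textup{id}_E$ in both directions between $(E^\bullet,\delta^\bullet)$ and $(\ccb^\bullet(G;E),\delta^\bullet)$ yields $G$-chain maps $\alpha^\bullet$ and $\beta^\bullet$; the compositions $\beta^\bullet\circ\alpha^\bullet$ and $\alpha^\bullet\circ\beta^\bullet$ are both extensions of $\textup{id}_E$ and hence, by the uniqueness statement, induce the identity on the cohomology of the invariants. This gives mutually inverse isomorphisms in cohomology, and the uniform bound $\|\varphi^n\|\leq 1$ at every inductive step ensures that both induced maps are continuous, hence the isomorphism is topological. The main technical subtlety to be handled carefully is the admissibility of $\varepsilon_A$ and of each $d_A^{n-1}$: strongness only provides a contracting homotopy on $\mathcal{C}A^\bullet$, so one must check that the splitting obtained there is genuinely $G$-equivariant and of norm at most $1$ before invoking the relative injectivity of $B^n$, and that the norm control propagates through the induction to secure the \emph{topological} nature of the isomorphism rather than a merely algebraic one.
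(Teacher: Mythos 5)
The paper offers no proof of this statement: it is quoted directly from Monod's book \cite[Theorem~7.2.1]{monod:libro}, and your proposal is essentially a reconstruction of the standard argument behind that reference (the extension lemma for strong resolutions into relatively injective resolutions, uniqueness of the extension up to $G$-homotopy, and the comparison with the standard resolution $\ccb^\bullet(G;E)$). The overall architecture is correct and is exactly the one the cited proof follows.

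Two points in your write-up would need correction before the argument closes. First, the ``main technical subtlety'' you flag at the end --- checking that the splitting coming from the contracting homotopy is $G$-equivariant --- is a verification you cannot carry out: contracting homotopies of strong resolutions are in general \emph{not} $G$-equivariant, and that is the whole point of the definition of admissibility in this theory. An admissible $G$-morphism only requires a splitting $\sigma$ that is a morphism of Banach \emph{spaces} of norm at most $1$, with no equivariance; the contracting homotopy supplies exactly that, and the $G$-equivariance of the lift is then recovered from relative injectivity, not from the splitting. As stated, your planned check would fail and stall the induction. Second, the claimed bound $\lVert \varphi^n\rVert \leq \lVert \varphi\rVert$ is false in general: at each inductive step one extends $d_B^{n-1}\circ\varphi^{n-1}$, so the norm picks up a factor of $\lVert d_B^{n-1}\rVert$ (for the standard resolution $\lVert\delta^n\rVert = n+2$), and the norms of the components of the chain map grow with the degree. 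This does not damage the conclusion --- boundedness in each degree is all that is needed for the induced maps on cohomology to be continuous, hence for a topological isomorphism --- but if the uniform bound were true one would wrongly conclude that the isomorphism is isometric, which is precisely the stronger statement reserved for resolutions on amenable spaces (Theorem~\ref{teor:monod:resolution:L:inf}) and not available at this level of generality.
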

\begin{teor}[{\cite[Theorem~7.5.3]{monod:libro}}]\label{teor:monod:resolution:L:inf}
 Let $G$ be a semisimple Lie group of non-compact type and let $E$ be a Banach $G$-module dual to some Banach space. Let $B(G)$ denote the Furstenberg-Poisson boundary of $G$. Then, the cohomology of the complex of $G$-invariants $$(\textup{L}^\infty_{\textup{w}^*}(B(G)^{\bullet+1};E)^G, \delta^\bullet)$$ is \emph{isometrically isomorphic} to the continuous bounded cohomology $\textup{H}_{cb}^n(G; E)$ for every $n \geq 0$. The same holds also for any lattice $\Gamma \leq G$. 
\end{teor}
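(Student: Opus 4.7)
The plan is to recognize that the statement is an enhancement of the preceding Theorem~\ref{teor:monod:resolution:solo:iso}, where the upgrade from a topological to an \emph{isometric} isomorphism is supplied by the amenability of the $G$-action on $B(G)$. Accordingly, I would organise the proof in three stages: verifying that the complex is a strong resolution by relatively injective modules (for the bare isomorphism), constructing a pair of explicit norm-non-increasing chain maps connecting it to the bar resolution $\ccb^\bullet(G;E)$ (for the isometry), and finally transferring everything to lattices.

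First, I would confirm that
$$0 \to E \xrightarrow{\varepsilon} \textup{L}^\infty_{\textup{w}^*}(B(G);E) \xrightarrow{\delta^0} \textup{L}^\infty_{\textup{w}^*}(B(G)^{2};E) \xrightarrow{\delta^1} \cdots$$
is a strong resolution of $E$ by relatively injective $G$-modules. The relative injectivity of each term is exactly the content of the second item of Example~\ref{es:relatively:injective}, which in turn rests on $B(G) = G/P$ being an amenable $G$-space because the minimal parabolic $P$ is amenable. Strongness amounts to producing a contracting homotopy on the maximal continuous subcomplex; this can be done by integrating along a fixed probability measure on $B(G)$ in the Haar class, which gives the usual simplicial homotopy and is norm-non-increasing by construction. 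Once this is checked, Theorem~\ref{teor:monod:resolution:solo:iso} immediately yields an isomorphism of topological vector spaces between the cohomology of the $G$-invariant subcomplex and $\hcb^n(G;E)$ in every degree.

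The main work is upgrading this to an \emph{isometric} isomorphism, and this is where I expect the real obstacle to lie. My plan is to exhibit two $G$-equivariant chain maps of norm at most one going in opposite directions between $(\ccb^\bullet(G;E)^G,\delta^\bullet)$ and $(\textup{L}^\infty_{\textup{w}^*}(B(G)^{\bullet+1};E)^G,\delta^\bullet)$, each extending $\mathrm{id}_E$ in degree $-1$. One direction is relatively soft: pullback along the natural projection $\pi\colon G\to B(G)$, $g\mapsto gP$, gives a tautological, isometric $G$-map into bounded measurable cochains on $G^{\bullet+1}$, which one then continuises by convolution with a compactly supported approximate identity; this composite is norm-non-increasing. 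The opposite direction is the delicate one and is the step that genuinely exploits amenability: one fixes a $G$-equivariant conditional expectation (a Poisson-type transform) of operator norm $\leq 1$ associated to the amenable fibration $G\to B(G)$, and applies it coordinatewise to turn a continuous bounded cocycle on $G^{\bullet+1}$ into a weak-$^*$ measurable cocycle on $B(G)^{\bullet+1}$. One must check that the result is indeed weak-$^*$ measurable and that it intertwines the coboundary of Equation~(\ref{eq:coboundary:essentially:bounded}); this is a disintegration/Fubini computation along the amenable fibres of $\pi$, and it is precisely here that the hypothesis that $E$ is a dual Banach space is used, through the existence of a weak-$^*$ measurable lift. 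Because both chain maps extend $\mathrm{id}_E$ and both target complexes are strong resolutions by relatively injective modules, the extension property forces them to be chain homotopic to the canonical comparison maps, so the induced maps in cohomology are mutually inverse; being norm-non-increasing in both directions, they are isometric.

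Finally, for a lattice $\Gamma\leq G$, I would observe that amenability of the $G$-action on $B(G)$ passes to $\Gamma$, so $B(G)$ remains an amenable $\Gamma$-space, and the resolution $(\textup{L}^\infty_{\textup{w}^*}(B(G)^{\bullet+1};E),\delta^\bullet)$ is again a strong resolution of $E$ by relatively injective $\Gamma$-modules (see Example~\ref{es:relatively:injective}). The two chain maps constructed above are defined without any cocompactness or finite-covolume assumption and remain $\Gamma$-equivariant when restricted, so the same argument yields the isometric isomorphism with $\hcb^n(\Gamma;E)$. The hard part throughout is verifying that the amenability machinery produces \emph{contractive}, not merely bounded, chain maps; the fundamental lemma alone only gives the topological isomorphism of Theorem~\ref{teor:monod:resolution:solo:iso}, and without the Poisson-transform step one cannot conclude the isometric statement.
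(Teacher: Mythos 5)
The paper does not prove this statement at all: it is imported verbatim from Monod's monograph (\cite[Theorem~7.5.3]{monod:libro}) and used as a black box, so there is no internal proof to compare against. Judged on its own terms, your reconstruction has the right architecture and matches the standard proof in the literature in its first and third stages: relative injectivity of $\textup{L}^\infty_{\textup{w}^*}(B(G)^{\bullet+1};E)$ from amenability of $B(G)=G/P$, strongness via integration against a quasi-invariant probability measure, the fundamental lemma for the topological isomorphism, and restriction of the amenable action to a lattice for the last claim. Your closing argument is also sound: two mutually homotopy-inverse chain maps extending $\mathrm{id}_E$, each of norm at most one, do force the induced isomorphism to be isometric.

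The one place where you diverge from the standard route, and where your sketch is on thin ice, is the ``Poisson-type transform'' giving the contractive map $\textup{C}^\bullet_{cb}(G;E)\to \textup{L}^\infty_{\textup{w}^*}(B(G)^{\bullet+1};E)$. There is no $G$-equivariant disintegration of Haar measure over $G/P$ (the fibres are copies of the non-compact, non-unimodular-in-$G$ group $P$, and $G/P$ carries no invariant measure), so the ``conditional expectation along the amenable fibration'' can only be built from an invariant \emph{mean} on $P$ (or on $P^{\bullet+1}$ for the coordinatewise version); means are not given by integration, and verifying weak-$^*$ measurability of the output and compatibility with the coboundary is exactly the content of the amenability machinery, not a routine Fubini argument. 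Monod avoids this entirely: the inequality in that direction comes for free from the general minimality of the canonical seminorm over \emph{all} strong resolutions (\cite[Theorem~7.3.1]{monod:libro}), while the explicit contractive map is only needed in the easy direction, where it is cleanest to pull back along an orbit map $G\to B(G)$ (null-preserving since $P$ is closed) and compare with the resolution $\textup{L}^\infty_{\textup{w}^*}(G^{\bullet+1};E)$, rather than to ``continuise by convolution.'' So your plan can be completed, but the hard half of the isometry is better obtained abstractly than by the transform you propose.
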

\begin{oss}\label{oss:alternating}
Note that Theorem~\ref{teor:monod:resolution:L:inf} is still true when we restrict to the subcomplex and to the subresolution of alternating functions~\cite[Theorem~7.5.3]{monod:libro}. We recall that $f \colon B(G)^{\bullet} \rightarrow E$ is said to be \emph{alternating} if for every permutation $\sigma$ of $n$ elements, we have
$$
\textup{sign}(\sigma) f(\xi_{\sigma(1)}, \cdots, \xi_{\sigma(\bullet)}) = f(\xi_1, \cdots, \xi_\bullet) \ ,
$$
where $\textup{sign}$ denotes the parity of the permutation and $\xi_1, \cdots, \xi_\bullet \in \, B(G)$. We will need this stronger version of Theorem~\ref{teor:monod:resolution:L:inf} at the end of the proof of Proposition~\ref{Milnor:wood:style}
\end{oss}

We conclude this section by recalling that even strong resolutions may encode useful information. For instance, let $(X,\mu)$ be a standard Borel probability space on which $G$ acts by preserving the measure class of $\mu$. Given a Banach $G$-module $E$ that is dual of some Banach space, we can consider the resolution of $E$ given by $(\mathcal{B}^\infty(X^{\bullet+1};E), \delta^\bullet)$, where the augmentation map $\varepsilon \colon E \rightarrow \mathcal{B}^\infty(X;E)$ is given by the inclusion of constant functions and the coboundary operator is given by Equation~\eqref{eq:coboundary:essentially:bounded}. It is proved by Burger and Iozzi~\cite[Proposition 2.1]{burger:articolo} that the complex of bounded measurable functions $(\mathcal{B}^\infty(X^{\bullet+1};E),\delta^\bullet)$ is in fact a strong resolution for $E$. Since by Burger and Monod~\cite[Proposition 1.5.2]{burger2:articolo} the cohomology of \emph{any} strong resolution of the Banach $G$-module $E$ maps naturally to the continuous bounded cohomology of $G$, we get a canonical map
\begin{equation}\label{eq:map:end:sec:bm}
\mathfrak{c}^\bullet:\textup{H}^\bullet(B^\infty(X^{\bullet+1};E)^G) \rightarrow \hcb^\bullet(G;E) \ .
\end{equation}
This means that any bounded measurable cocycle $f \in B^\infty(X^{\bullet+1};E)^G$ naturally determines a cohomology class in $\hcb^\bullet(G;E)$.

\subsection{Continuous bounded cohomology of $G = \Isom(\mathbb{H}^n)$}\label{sec:cbc}

Let $n \geq 3$ and let $G = \Isom(\mathbb{H}^n)$ be the group of isometries of the hyperbolic $n$-space. We are going to denote the same group also by $\po(n,1)$ (and $\po^\circ(n,1)$ will be the connected component of the identity, that is the subgroup of orientation preserving isometries). Our main reference about the continuous and the continuous bounded cohomologies of $G$ is Bucher, Burger and Iozzi's paper~\cite{bucher2:articolo}. Let $G^+ \leq G$ denote the subgroup of orientation-preserving isometries. Since $G^+$ has index $2$ as a subgroup of $G$, there exists a well-defined homomorphism
\begin{equation}\label{eq:homo:1:meno1}
\varepsilon \colon G \rightarrow G \slash G^+ \cong \{-1, 1\} .
\end{equation}
Using the homomorphism $\varepsilon$, we can endow $\mathbb{R}$ with the following structure of Banach $G$-module %(see Example~\ref{es:Banach:G:modules}.1)
\begin{align*}
G \times \mathbb{R} \rightarrow \mathbb{R} \ , \hspace{10pt} (g, a) \mapsto \varepsilon(g) \cdot a  \ .
\end{align*}
We will denote the Banach $G$-module $\mathbb{R}$ either by $\mathbb{R}_\varepsilon$ if it has the previous $G$-structure or by $\mathbb{R}$ if it is endowed with  the trivial one (see Example~\ref{es:Banach:G:modules}.1).

More generally, given a lattice $\Gamma < G$ and a representation $\rho \colon \Gamma \rightarrow G$, we may consider $\mathbb{R}$ as a Banach $\Gamma$-module endowed either with the trivial action or the one given by
\begin{align*}
\Gamma \times \mathbb{R} \rightarrow \mathbb{R} , \hspace{10pt} (\gamma, a) \mapsto \varepsilon(\rho(\gamma)) \cdot a  \ ,
\end{align*}
where $\varepsilon$ is the homomorphism defined in Equation~(\ref{eq:homo:1:meno1}). Then, since $\textup{C}_{c (b)}^\bullet(\rho)$ is norm non-increasing, any representation induces pullback maps in continuous (bounded) cohomology:
$$
\textup{H}_{c (b)}^\bullet(\rho) \colon \textup{H}_{c (b)}^\bullet(G; \mathbb{R}) \rightarrow \textup{H}_{(b)}^\bullet(\Gamma; \mathbb{R})
$$
and
$$
\textup{H}_{c (b)}^\bullet(\rho) \colon \textup{H}_{c(b)}^\bullet(G; \mathbb{R}_\varepsilon) \rightarrow \textup{H}_{(b)}^\bullet(\Gamma; \mathbb{R}_\rho) \ ,
$$
where we dropped the subscript \emph{c} in the right hand side being $\Gamma$ discrete.

We now recall how to compute the continuous bounded cohomologies $\textup{H}_{cb}^\bullet(G; \mathbb{R}_{(\varepsilon)})$ and $\textup{H}_{b}^\bullet(\Gamma; \mathbb{R}_{(\varepsilon)})$. Since the boundary at infinity $\partial \hyp^n$ of the hyperbolic $n$-space $\mathbb{H}^n$ can be realized as quotient of either $G$ or $G^+$ by a minimal parabolic subgroup, it may be identified with their Furstenberg-Poisson boundary. Therefore, applying Theorem~\ref{teor:monod:resolution:L:inf}, we know that $\textup{H}_{cb}^\bullet(G; \mathbb{R}_{(\varepsilon)})$ is isometrically isomorphic to the cohomology of complex of $G$-invariants $(\textup{L}^{\infty}((\partial \mathbb{H}^n)^{\bullet+1}; \mathbb{R}_{(\varepsilon)})^{G} , \delta^\bullet)$. For ease of notation along the paper we will identify $\partial \mathbb{H}^n \cong \mathbb{S}^{n-1}$. The same strategy also works for any lattice $\Gamma < G$, i.e. $\textup{H}_{b}^\bullet(\Gamma; \mathbb{R}_{(\rho)})$ is isometrically isomorphic to the cohomology of the complex of $G$-invariants $(\textup{L}^{\infty}((\partial \mathbb{H}^n)^{\bullet+1}; \mathbb{R}_{(\rho)})^{\Gamma} , \delta^\bullet)$.

We introduce now a cocycle which will be a key tool in our paper.

\begin{deft}\label{def:vol:cociclo}
We define the \emph{volume cocycle}
$$
\Vol_n \colon (\mathbb{S}^{n-1})^{n+1} \rightarrow \mathbb{R}_\varepsilon \ 
$$
as
$$
(\xi_0, \cdots, \xi_n) \mapsto \mbox{signed volume of the convex hull of } \{\xi_0, \cdots, \xi_n\} \ .
$$
In the sequel we will see $\Vol_n$ as an element of both the spaces $\textup{L}^{\infty}((\mathbb{S}^{n-1})^{n+1}; \mathbb{R}_\varepsilon)^G$ and $\mathcal{B}^{\infty}((\mathbb{S}^{n-1})^{n+1}; \mathbb{R}_\varepsilon)^G$.
\end{deft}

One of the peculiar features of the volume cocycle is that it is in fact the unique representative of its cohomology class. Indeed, following~\cite[Lemma~1]{bucher2:articolo}, the absence of $(n-1)$-cocycles implies that cohomology groups $\textup{H}^n_{cb}(G; \mathbb{R}_\varepsilon)$ and $\textup{H}^n_c(G; \mathbb{R}_\varepsilon)$ coincide with the subspaces of the $n$-cocycles sitting inside $\textup{L}^{\infty}((\mathbb{S}^{n-1})^{n+1}; \mathbb{R}_{\varepsilon})^{G}$ and $\textup{C}_c((\mathbb{H}^n)^{n+1};\R_{(\varepsilon)})^G$, respectively (in the latter case we are using \cite[Chapter III, Proposition 2.3]{guichardet} and the fact that $\mathbb{H}^n$ is the quotient of $G$ or $G^+$ by a maximal compact subgroup). Note that $\textup{C}_c((\mathbb{H}^n)^{n+1};\R_{\varepsilon})^G$ denotes the space of continuous real-valued $G$-invariant functions on $(n+1)$-tuples of points of $\hyp^n$. Since $\textup{H}^n_c(G; \mathbb{R}_\varepsilon) \cong \mathbb{R}$ and it is generated by the volume form, which admits $\Vol_n$ as a bounded representative, we have that the comparison map
$$
\comp^n \colon \textup{H}^n_{cb}(G; \mathbb{R}_\varepsilon) \rightarrow \textup{H}^n_c(G; \mathbb{R}_\varepsilon)
$$
is an isometric isomorphism in top dimensional degree (see~\cite[Proposition~2]{bucher2:articolo}).

\subsection{The orientation cocycle and the Euler class} \label{sec:euler:class}

In this section we are going to introduce the notion of Euler class. We will use the orientation cocycle to get a preferred representative for this class. We refer the reader to~\cite{iozzi02:articolo} for a more detailed description about these notions. 

Fix an orientation on $\bbS^1$ and denote by $\homs$ the group of orientation-preserving homeomorphisms endowed with the discrete topology. Consider the group $\textup{Homeo}_\Z(\R)$ of the homeomorphisms of the real line commuting with the integer translation $T \colon \R \rightarrow \R$ given by $T(x)=x+1$, for every $n \in \, \mathbb{Z}$. This group can be thought of as the universal covering of $\homs$ and the projection $p \colon \textup{Homeo}_\Z(\R) \rightarrow \homs$ fits in the following central extension:
$$
0 \rightarrow \mathbb{Z} \xrightarrow{\iota} \textup{Homeo}_\Z(\R) \xrightarrow{p} \homs \rightarrow 0 \ ,
$$
where $\iota(n) \coloneqq T^n$ for every $n \in \, \mathbb{Z}$. We want now to construct a cocycle associated to the previous central extension. Notice that given an element $f \in \homs$ there exists a unique lift $\tilde f \in \textup{Homeo}_\Z(\R)$ which satisfies $\tilde f(0) \in [0,1)$. Hence, the previous construction allows us to define a section as follows
$$
s\colon \homs \rightarrow \textup{Homeo}_\Z(\R), \hspace{10pt} s(f):=\tilde f \ .
$$

Then, one can define an integer valued $2$-cocycle $\mathcal{E} \colon (\homs)^2 \rightarrow \Z$ associated to the previous central extension via the section $s$ (see for instance \cite[Section~2.2, Section~10.2]{miolibro}):
$$
\widetilde{f \circ g} \circ T^{\mathcal{E}(f,g)} = \tilde f \circ \tilde g \ ,
$$
for every $f,g \in \homs$. The cocycle $\mathcal{E}$ determines an integral bounded cohomology class which is mapped to a real bounded cohomology class $\textup{e}_b \in \uph^2_b(\homs,\R)$ under the change of coefficients homomorphism. Notice that the construction of $\textup{e}_b$ is independent of the chosen section $s$ in the definition (see again~\cite[Section~2.2]{miolibro}). 
\begin{deft}
The previous cohomology class $\textup{e}_b$ is called \emph{bounded Euler class}.
\end{deft}
Since we will need to represent explicitly $\textup{e}_b$ via a measurable cocycle defined on the space $(\bbS^1)^3$, we introduce the \emph{orientation cocycle}
\begin{equation}\label{eq:orien:cocycle}
\orien: (\bbS^1)^3 \rightarrow \R, \hspace{10pt} \orien(\xi_0,\xi_1,\xi_2):=
\begin{cases*}
+1  &\text{if $(\xi_0,\xi_1,\xi_2)$ are positively oriented ,}\\
-1   &\text{if $(\xi_0,\xi_1,\xi_2)$ are negatively oriented ,}\\
0    &\text{otherwise .} 
\end{cases*}
\end{equation}

Notice that the function $\orien$ is a bounded $\homs$-invariant Borel cocycle on $\bbS^1$ and so it canonically determines a bounded cohomology class in $\textup{H}_b^2(\homs; \mathbb{R})$, as explained at the end of Section~\ref{sec:bm} (see Equation~\eqref{eq:map:end:sec:bm}). Since the orientation cocycle can be thought of as the area of an ideal triangle divided by $\pi$, one can deduce that it is a cocycle as a consequence of Stokes Theorem. Moreover, using the previous definition of $\mathcal{E}$, Iozzi~\cite[Lemma 2.1]{iozzi02:articolo} shows that 
\begin{equation}\label{orientation:eulero}
-2\textup{e}_b=[\orien] \ .
\end{equation}

Hence, we get the following:

\begin{deft}
We define the \emph{Euler cocycle} as the representative of the Euler class given by
$$
\epsilon = -\frac{\orien}{2} \in \mathcal{B}^\infty((\bbS^1)^3;\R)^{\homs} \ .
$$
\end{deft}

\subsection{Relative cohomology}\label{sec:rel:coom}

As we mentioned in the introduction our definition of volume of measurable cocycles will apply for both uniform and non-uniform torsion-free lattices $\Gamma < G^+$. However, following~\cite{bucher2:articolo}, when we deal with the non-uniform case, it is necessary to work with the \emph{relative} bounded cohomology of spaces. For the convenience of the reader we recall here its definition. 

Let $X$ be a topological space and let $A \subset X$ be a subspace. Given a singular simplex $\sigma \colon \Delta^\bullet \rightarrow X$, we denote by $\textup{Im}(\sigma)$ its image. Recall that the relative cohomology $\textup{H}^\bullet(X, A; \mathbb{R})$ is computed via the following cochain complex
$$
(\textup{C}^\bullet(X, A; \mathbb{R}),\delta^\bullet) \coloneqq  (\{ f \in \, \textup{C}^\bullet(X; \mathbb{R}) \, | \, f(\sigma) = 0 \mbox{ if } \textup{Im}(\sigma) \subset A \} , \delta^\bullet) \ ,
$$
where $\textup{C}^\bullet(X; \mathbb{R})$ denotes the space of singular cochains in $X$ and $\delta^\bullet$ is the usual coboundary operator. We define the \emph{bounded cochain complex} as 
$$
(\textup{C}^\bullet_b(X, A; \mathbb{R}),\delta^\bullet) \coloneqq (\{ f \in \, \textup{C}^\bullet(X, A; \mathbb{R}) \, | \, \lVert f \rVert_\infty < +\infty \} , \delta^\bullet) \ ,
$$
where $\lVert f \rVert_\infty \coloneqq \sup \{| f(\sigma) | \, | \, \sigma \colon \Delta^\bullet \rightarrow X \mbox{ continuous } \}$.

\begin{deft}
The \emph{relative bounded cohomology} of the topological pair $(X, A)$, denoted by $\textup{H}^\bullet_b(X, A ; \mathbb{R})$, is the cohomology of the bounded complex $(\textup{C}^\bullet_b(X, A; \mathbb{R})  , \delta^\bullet)$.
\end{deft}

\begin{oss}
The previous notion restricts to the bounded cohomology of a space $X$ when we consider the pair $(X, \emptyset)$.
\end{oss}

The deep connection between bounded cohomology of spaces and bounded cohomology of groups is nicely expressed via Gromov's mapping theorem~\cite{Grom82} (see~\cite[Corollary~4.4.5]{FM:Grom} for a topological proof by Frigerio and the first author following Gromov's approach and Ivanov's proof~\cite[Theorem~4.1]{Ivanov} using homological algebra). More precisely, let $X$ be a topological space and let $\pi_1(X)$ denote its fundamental group. Then, there exists an isometric isomorphism in bounded cohomology
\begin{equation}\label{eq:gromov:map}
\xymatrix{
g_X \colon\textup{H}_b^\bullet(\pi_1(X); \mathbb{R}) \ar[r]^-{\cong}  & \textup{H}_b^\bullet(X; \mathbb{R}) \ .
}
\end{equation} 

Let $n \geq 3$. The argument that follows actually holds also for $n=2$, but we will not need this fact. Let now $\Gamma < G^+$ be a torsion-free non-uniform lattice and let $M = \Gamma \backslash \mathbb{H}^n$. It is well known that $M$ is a complete finite-volume hyperbolic manifold. Let $N$ be a compact core of $M$, that is a compact subset of $M$ such that $M \setminus N$ consists of the disjoint union of finitely many horocyclic neighbourhoods of cusps. Let us call them $E_j$ with $j = 1, \cdots, k$. Note that all compact cores of $M$ are homotopy equivalent.

 We are going to explain now how to evaluate an  element $\alpha \in \, \textup{H}^n_b(\Gamma; \mathbb{R})$ on the fundamental class $[N, \partial N] \in \, \textup{H}_n(N, \partial N; \mathbb{R})$ of $N$ via the Kronecker product. Recall that the Kronecker product is the following bilinear form
$$
\langle \cdot , \cdot \rangle \colon \textup{H}^n(N, \partial N; \mathbb{R}) \times \textup{H}_n(N, \partial N; \mathbb{R}) \rightarrow \mathbb{R} \ .
$$

To that end, we need to  describe a map from $\textup{H}^n_b(\Gamma; \mathbb{R})$ to $\textup{H}^n(N, \partial N; \mathbb{R})$. First, let us consider the isometric isomorphism $g_M$ described in Gromov's mapping theorem (see Equation~(\ref{eq:gromov:map})). This allows us to map $\alpha$ to $g_M(\alpha) \in \, \textup{H}^n_b(M; \mathbb{R})$. Hence, we are reduced to describe a map from $\textup{H}^n_b(M; \mathbb{R})$ to $\textup{H}^n(N, \partial N; \mathbb{R})$. Recall that the short exact sequence
$$
0 \rightarrow \textup{C}_b^\bullet(M, M \setminus N; \mathbb{R}) \rightarrow \textup{C}_b^\bullet(M; \mathbb{R}) \rightarrow\textup{C}_b^\bullet(M \setminus N ; \mathbb{R}) \rightarrow 0
$$
induces a long exact sequence
$$
\cdots \rightarrow \textup{H}_b^{\bullet - 1}(M \setminus N; \mathbb{R}) \rightarrow \textup{H}^\bullet_b(M, M \setminus N; \mathbb{R}) \rightarrow \textup{H}^\bullet_b(M; \mathbb{R}) \rightarrow \textup{H}_b^\bullet(M \setminus N; \mathbb{R}) \rightarrow \cdots \ .
$$

Since $M \setminus N = \sqcup_{j = 1}^k E_j$ is the disjoint union of spaces with virtually Abelian fundamental groups, whence with amenable fundamental groups, Gromov's mapping theorem implies that $\textup{H}_b^{\bullet \geq 1}(M \setminus N; \mathbb{R}) = 0$. This shows that the inclusion $j \colon (M, \emptyset) \rightarrow (M, M \setminus N)$ induces an isomorphism in bounded cohomology in degree greater than or equal to $2$. We mention here (but, we will not use it later) that $\textup{H}^\bullet_b(j) \colon \textup{H}^\bullet_b(M, M \setminus N; \mathbb{R})  \rightarrow \textup{H}^\bullet_b(M; \mathbb{R})$ is in fact an isometric isomorphism in degree greater than or equal to $2$, as proved by Bucher, Burger, Frigerio, Iozzi, Pagliantini and Pozzetti~\cite{BBFIPP}. 

Therefore, since $n \geq 3$ and $(M, M \setminus N) \simeq (N, \partial N)$ via a homotopy of pairs $h$, the composition
$$
\xymatrix{
\textup{H}_b^n(\Gamma; \mathbb{R}) \ar[r]^-{g_M^n} & \textup{H}^n_b(M; \mathbb{R}) \ar[r]^-{\textup{H}^n_b(j)^{-1}} & \textup{H}^n_b(M, M \setminus N; \mathbb{R}) \ar[r]^-{\textup{H}^n_b(h)} & \textup{H}^n_b(N, \partial N; \mathbb{R}) , 
}
$$
is an isometric isomorphism. We denote the previous composition by $\textup{J}^n$.

We are now able to show how to evaluate an element $\alpha \in \, \textup{H}^n_b(\Gamma; \mathbb{R})$ on the fundamental class $[N, \partial N] \in \, \textup{H}_n(N, \partial N; \mathbb{R})$ of $N$. Indeed, it is now sufficient to take the following Kronecker product
$$
\langle \comp^n \circ\textup{J}^n (\alpha), [N, \partial N] \rangle \in \, \mathbb{R} \ ,
$$
where $\comp^n \colon \textup{H}^n_b(N, \partial N; \mathbb{R}) \rightarrow \textup{H}^n(N, \partial N; \mathbb{R})$ denotes the comparison map.

\subsection{Transfer maps}\label{sec:trans}

Let $n \geq 2$ and let $G$ be as usual $\Isom(\mathbb{H}^n)$. Let $G^+ < G$ be the subgroup of orientation-preserving isometries. Consider $\Gamma < G^+$ a torsion-free non-uniform lattice. Let $M = \Gamma \backslash \mathbb{H}^n$ and let $N$ be its compact core. In this section we are going to introduce two standard maps which allow to \emph{transfer} the information contained in the bounded cohomology of $\Gamma$ and in the cohomology of $(N, \partial N)$ to the  continuous bounded and the continuous cohomologies of $G$, respectively. More precisely, following~\cite{bucher2:articolo} (see also~\cite{BIW1, MichelleKim}), we define two natural \emph{transfer maps}
$$
\trans_\Gamma^\bullet \colon \textup{H}^\bullet_b(\Gamma; \mathbb{R}) \rightarrow \textup{H}_{cb}^\bullet(G; \mathbb{R}_\varepsilon) 
$$
and
$$
\tau_{\text{dR}}^\bullet \colon \textup{H}^\bullet(N, \partial N; \R) \rightarrow \textup{H}^\bullet_c(G; \mathbb{R}_\varepsilon) \ .
$$

\begin{oss}
Note that we are going to define the transfer maps in the case of non-uniform lattices. However, the same techniques apply \emph{verbatim} to the uniform case.
\end{oss}

Let us begin by recalling the definition of $\trans_\Gamma$.  

\begin{deft}\label{def:trans:map}
Let $\Gamma$ and $G$ as above. Let $\widehat{\trans}_\Gamma$ be the following cochain map
$$
\widehat{\trans}^\bullet_\Gamma \colon \textup{L}^\infty((\mathbb{S}^{n-1})^{\bullet+1}; \mathbb{R})^\Gamma \rightarrow \textup{L}^\infty((\mathbb{S}^{n-1})^{\bullet + 1}; \mathbb{R}_\varepsilon)^G \ ,
$$
$$
\widehat{\trans}^\bullet_\Gamma (\psi) (\xi_0, \cdots, \xi_{\bullet}) \coloneqq \int_{\Gamma \backslash G} \varepsilon(\overline{g}^{-1}) \cdot \psi(\overline{g} \cdot \xi_0, \cdots, \overline{g} \cdot \xi_{\bullet}) d \mu (\overline{g})
$$
where $\psi \in \, \textup{L}^\infty((\mathbb{S}^{n-1})^{\bullet + 1}; \mathbb{R})^\Gamma$, $\xi_0, \cdots, \xi_{\bullet} \in \, \mathbb{S}^{n-1}$ and $\mu$ is the normalized invariant probability measure on $\Gamma \backslash G$ induced by the Haar measure. 
\end{deft}
Notice that by linearity it is immediate to check that $\widehat{\trans}^\bullet_\Gamma$ commutes with the coboundary operator. Moreover, the following computation shows that if $\psi$ is $\Gamma$-invariant, then $\widehat{\trans}^\bullet_\Gamma (\psi)$ is $G$-invariant. For every $g_0 \in G$, we have

\begin{align*}
g_0 \cdot (\widehat{\trans}^\bullet_\Gamma (\psi) (\xi_0, \cdots, \xi_{\bullet})) &=\varepsilon(g_0)\cdot \widehat{\trans}^\bullet_\Gamma(g_0^{-1} \cdot \xi_0,\ldots,g_0^{-1} \cdot \xi_n)=\\
&=\int_{\Gamma \backslash G} \varepsilon(\overline{g_0g^{-1}})\psi(\overline{g g_0^{-1}} \cdot \xi_0,\ldots \overline{g g_0^{-1}} \cdot \xi_n)d\mu(\overline{g})=\\
&=\int_{\Gamma \backslash G} \varepsilon(\overline{g}^{-1})\psi(\overline{g} \cdot \xi_0,\ldots \overline{g} \cdot \xi_n)d\mu(\overline{g})= \\
&=(\widehat{\trans}^\bullet_\Gamma(\psi)(\xi_0,\ldots,\xi_n)) \ ,
\end{align*}
where to move from the first line to the second one we used the fact that $\varepsilon$ is a homomorphism and to move from the second line to the third one we used the unimodularity of $G$ and the $G$-invariance of the measure $\mu$. As a consequence of the previous computation we can give the following

\begin{deft}
We define the \emph{transfer map} $\trans_\Gamma$ as the induced map in cohomology by $\widehat{\trans}_\Gamma$:
$$
\trans_\Gamma^\bullet \colon \textup{H}^\bullet_{b}(\Gamma; \mathbb{R}) \rightarrow \textup{H}^\bullet_{cb}(G; \mathbb{R}_\varepsilon) \ .
$$
\end{deft}

Unfortunately, the definition of the transfer map $\tau_{\text{dR}}$ needs some more work. We define
$$
\tau_{\text{dR}}^\bullet \colon \textup{H}^\bullet(N, \partial N; \mathbb{R}) \rightarrow \textup{H}^\bullet_c(G; \mathbb{R}_\varepsilon)
$$
as the composition of two maps. Recall that the relative de Rham cohomology $\textup{H}^\bullet_{\text{dR}}(M, M \setminus N; \mathbb{R})$ is defined as the cohomology of the complex of differential forms $\Omega^\bullet(M, M \setminus N; \mathbb{R})$ defined on $M$ and whose restriction to $M \setminus N$ vanishes. Via the relative de Rham isomorphism $\Psi_{\text{dR}}$ we have
$$
\xymatrix{
\textup{H}^\bullet_\textup{dR}(M, M \setminus N ; \mathbb{R}) \ar[r]^-{\Psi_\text{dR}^\bullet}_-{\cong} &\textup{H}^\bullet(M, M \setminus N; \mathbb{R}) \ar[r]^-{\textup{H}^\bullet(h)}_-{\cong} & \textup{H}^\bullet(N, \partial N; \mathbb{R}) \ ,
}
$$
where $\textup{H}^\bullet(h)$ is the isomorphism induced by  the homotopy equivalence of pairs $h \colon (M, M \setminus N) \rightarrow (N, \partial N)$. Let us denote by $\Psi^\bullet = \textup{H}^\bullet(h) \circ \Psi_{\text{dR}}^\bullet$ the composition of the previous two isomorphism. Since we want to define $\tau_{\text{dR}}^\bullet$ as the composition of $(\Psi^\bullet)^{-1}$ with another map, we are reduced to introduce a map 
$$
\trans_{\text{dR}}^\bullet \colon \textup{H}^\bullet_\textup{dR}(M, M \setminus N ; \mathbb{R}) \rightarrow \textup{H}^\bullet_c(G; \mathbb{R}_\varepsilon) \ .
$$
Following~\cite{bucher2:articolo}, let us denote by $U = \pi^{-1}(M \setminus N)$ the preimage under the universal covering projection $\pi \colon \mathbb{H}^n \rightarrow M$ of the finite disjoint union of horocyclic neighbourhoods of cusps. Since we can identify the \emph{relative} differential forms $\Omega^\bullet(M, M \setminus N ; \mathbb{R}) $ with the $\Gamma$-invariant relative differential forms $\Omega(\mathbb{H}^n, U ; \mathbb{R})^\Gamma$, we get the following

\begin{deft}
Let $\Gamma$ and $G$ be as above. We call $\widehat{\trans}_{\text{dR}}$ the cochain map
$$
\widehat{\trans}_{\text{dR}}^\bullet \colon \Omega^\bullet(\mathbb{H}^n, U ; \mathbb{R})^\Gamma \rightarrow \Omega^\bullet(\mathbb{H}^n; \mathbb{R}_\varepsilon)^G
$$
defined by
$$
\widehat{\trans}_{\text{dR}}^\bullet (\psi) \coloneqq \int_{\Gamma \backslash G} \varepsilon(\overline{g}^{-1}) (\overline{g}^* \psi) d\mu(\overline{g}) \ ,
$$
where $\psi \in \, \Omega^\bullet(\mathbb{H}^n, U ; \mathbb{R})^\Gamma$ and $\mu$ is the normalized invariant probability measure on $\Gamma \backslash G$.
\end{deft}

Similar arguments that we used in the case of $\trans_\Gamma$ show that $\widehat{\trans}_{\text{dR}}$ is indeed a cochain map and that if $\psi$ is $\Gamma$-invariant, then $\widehat{\trans}_{\text{dR}}(\psi)$ is $G$-invariant.

\begin{deft}
We denote by $\textup{H}^\bullet(\widehat{\trans}_{\text{dR}}^\bullet)$ the induced map in coohomology:

$$\textup{H}^\bullet(\widehat{\trans}_{\text{dR}}^\bullet): \textup{H}^\bullet_{\text{dR}}(M, M \setminus N; \mathbb{R}) \rightarrow \textup{H}^\bullet(\Omega^\bullet(\mathbb{H}^n; \mathbb{R}_\varepsilon)^G) \ .$$
\end{deft}

Since any $G$-invariant differential form on $\mathbb{H}^n$ is closed by Cartan's Lemma, we may identify  $\textup{H}^\bullet(\Omega^\bullet(\mathbb{H}^n; \mathbb{R}_\varepsilon)^G)$ with $\Omega^\bullet(\mathbb{H}^n; \mathbb{R}_\varepsilon)^G$.  Therefore, in order to define $\tau_{\text{dR}}$, it only remains to consider the Van Est isomorphism (see \cite[Corollary~7.2]{guichardet})
$$
\mathcal{VE}^\bullet \colon \Omega^\bullet(\mathbb{H}^n; \mathbb{R}_\varepsilon)^G \rightarrow \textup{H}_c^\bullet(G; \mathbb{R}_\varepsilon) \ .
$$

\begin{deft}\label{def:tau:dr}
We define the transfer map 
$$\tau_{\text{dR}}^\bullet \colon \textup{H}^\bullet(N, \partial N; \mathbb{R}) \rightarrow \textup{H}^\bullet_c(G; \mathbb{R}_\varepsilon)$$
as follows
$$
\tau_{\text{dR}}^\bullet \coloneqq \mathcal{VE}^\bullet \circ \textup{H}^\bullet(\widehat{\trans}_{\text{dR}}^\bullet) \circ (\Psi^\bullet)^{-1}   \ ,
$$
where we use again the identification $\textup{H}^\bullet(\Omega^\bullet(\mathbb{H}^n; \mathbb{R}_\varepsilon)^G) \cong \Omega^\bullet(\mathbb{H}^n; \mathbb{R}_\varepsilon)^G$.
\end{deft}

\begin{oss}\label{tau:iso:grado:massimo}
Note that $\tau_{\text{dR}}$ is an isomorphism in top degree as mentioned by Bucher, Burger and Iozzi~\cite{bucher2:articolo}. 
\end{oss}

We conclude this section by a remarkable fact proved by Bucher, Burger and Iozzi~\cite[Proposition~3]{bucher2:articolo}. The transfer maps
$$
\trans_\Gamma^\bullet \colon \textup{H}^\bullet_b(\Gamma; \mathbb{R}) \rightarrow \textup{H}_{cb}^\bullet(G; \mathbb{R}_\varepsilon) 
$$
and
$$
\tau_{\text{dR}}^\bullet \colon \textup{H}^\bullet(N, \partial N; \R) \rightarrow \textup{H}^\bullet_c(G; \mathbb{R}_\varepsilon)
$$
fits in the following commutative diagram
\begin{equation}\label{eq:diagramma:commutativo:trasfer:maps}
\xymatrix{
\textup{H}^\bullet_b(\Gamma; \mathbb{R}) \ar[d]_-{\textup{J}^\bullet}^-{\cong} \ar[rd]^-{\trans_\Gamma^\bullet} \\
\textup{H}^\bullet_b(N, \partial N; \mathbb{R})  \ar[d]_-{\comp^\bullet} & \textup{H}^\bullet_{cb}(G; \mathbb{R}_\varepsilon) \ar[d]^-{\comp^\bullet} \\
\textup{H}^\bullet(N, \partial N; \mathbb{R}) \ar[r]_-{\tau_{\text{dR}}^\bullet} & \textup{H}^\bullet_c(G; \mathbb{R}_\varepsilon) \ .
}
\end{equation}

\begin{oss}
Note that when $G=\textup{PSL}(2,\R)$, we will simply use $\mathbb{R}$ as coefficients in the cohomology groups appearing in Diagram (\ref{eq:diagramma:commutativo:trasfer:maps}). Indeed, in this special case the the trivial action on $\R$ agrees with the one twisted by the sign.
\end{oss}

\section{A technical lemma}\label{sec:lemma:tecnico}

The aim of this section is to prove a fundamental technical lemma that we need in the sequel. Let $H$ be a locally compact group and let $\Gamma$ be a discrete group. We will suppose that $\Gamma$ admits a Furstenberg-Poisson boundary $B(\Gamma)$ and that $H$ acts measurably on compact completely metrizable space $Y$.

Suppose that there exists a sign homomorphism $\varepsilon \colon H \rightarrow \{-1, 1\}$ and denote by $H^+=\ker \varepsilon$. For instance this is the case when either $H=\Isom(\hyp^n)$ or $H=\homs$. In the latter case the sign homomorphism is trivial and $H=H^+$. 

Let $(X, \mu_X)$ be a standard Borel probability $\Gamma$-space. Consider the measurable cocycle $\sigma \colon \Gamma \times X \rightarrow H^+$ and suppose that there exists an essentially unique boundary map $\phi \colon B(\Gamma) \times X \rightarrow Y$. Here we are using the generalized version of boundary map introduced in Remark \ref{oss:boundary:genmap}.

We will be interested in the sequel on the following problem: given a cocycle $\psi \in \, \mathcal{B}^\infty(Y^{\bullet+1}; \mathbb{R}_{(\varepsilon)})^H$ we want to \emph{pullback} it along $\phi$, whence $\sigma$, in order to obtain a cocycle in $\textup{L}^\infty(B(\Gamma)^{\bullet+1}; \mathbb{R})^\Gamma$. Recall that $\mathbb{R}_\varepsilon$ is endowed with the $H$-module structure induced by $\varepsilon$ and $\R$ has trivial $\Gamma$-module structure. Our approach will follow some ideas that date back to Bader, Furman and Sauer's paper~\cite[Proposition~4.2]{sauer:articolo} (these ideas are also discussed by the second author for the definition of the Borel invariant of measurable cocycles~\cite[Proposition~3.1]{savini3:articolo}). However, we deal here with a more general version which will be suitable for our later computations. Recall that $\textup{L}^\infty(X)$ is a $\Gamma$-module endowed with the following action
$$
\gamma \cdot \big( f (x) \big) = \gamma \cdot f (\gamma^{-1} \cdot x) =  f (\gamma^{-1} \cdot x) \ ,
$$
where in the last equality we consider $\mathbb{R}$ as a $\Gamma$-module endowed with the trivial action. 

\begin{deft}\label{def:pullback:via:phi}
Under the previous assumptions we define the \emph{pullback along} $\phi$ as the following cochain map
$$
\textup{C}^\bullet(\phi) \colon \mathcal{B}^\infty(Y^{\bullet+1}; \mathbb{R}_{(\varepsilon)})^H \rightarrow \textup{L}^\infty_{\text{w}^*}(B(\Gamma)^{\bullet + 1}; \textup{L}^\infty(X))^\Gamma 
$$
$$
\textup{C}^\bullet(\phi)(\psi)(\xi_0, \cdots, \xi_\bullet) = \big(x \mapsto  \psi(\phi(\xi_0, x), \cdots, \phi(\xi_\bullet, x))\big) \ ,
$$
where $\psi \in \, \mathcal{B}^\infty(Y^{\bullet+1}; \mathbb{R}_{(\varepsilon)})^H$, $\xi_0, \cdots, \xi_\bullet \in \, B(\Gamma)$ and $x \in \, X$.
\end{deft}

The fact that the previous map is a well-defined cochain map is the content of the following lemma.

\begin{lem}\label{lemma:tecnico:pullback}
The pullback map $\textup{C}^\bullet(\phi)$ along $\phi$ is a well-defined norm non-increasing cochain map.
\end{lem}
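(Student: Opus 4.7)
\medskip

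\noindent
\textbf{Proof plan.} The plan is to verify in turn the three claims hidden in the statement: (i) the cochain $\textup{C}^\bullet(\phi)(\psi)$ actually lies in $\textup{L}^\infty_{\textup{w}^*}(B(\Gamma)^{\bullet+1};\textup{L}^\infty(X))^\Gamma$ for every $\psi \in \mathcal{B}^\infty(Y^{\bullet+1};\R_{(\varepsilon)})^H$; (ii) the operator norm is bounded by $1$; and (iii) $\textup{C}^\bullet(\phi)$ commutes with the coboundaries. Steps (ii) and (iii) will turn out to be formal, so the heart of the argument is step (i), and inside it the only delicate points are the weak-$^*$ measurability and the $\Gamma$-equivariance.

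First I would form the auxiliary function
\[
\widetilde{\psi}\colon B(\Gamma)^{\bullet+1}\times X\longrightarrow \R,\qquad
\widetilde{\psi}(\xi_0,\ldots,\xi_\bullet,x):=\psi(\phi(\xi_0,x),\ldots,\phi(\xi_\bullet,x)).
\]
Since $\phi$ is measurable and $\psi$ is a bounded measurable function on $Y^{\bullet+1}$, $\widetilde{\psi}$ is a jointly measurable function bounded by $\|\psi\|_\infty$. In particular, for every fixed $(\xi_0,\ldots,\xi_\bullet)$ the section $\widetilde{\psi}(\xi_0,\ldots,\xi_\bullet,\,\cdot\,)$ defines an element of $\textup{L}^\infty(X)$ of norm at most $\|\psi\|_\infty$, which simultaneously gives step (ii). To obtain the weak-$^*$ measurability of the induced map $B(\Gamma)^{\bullet+1}\to\textup{L}^\infty(X)$ I would test it against an arbitrary $h\in\textup{L}^1(X)$: by Fubini and the joint measurability of $\widetilde{\psi}$, the function
\[
(\xi_0,\ldots,\xi_\bullet)\longmapsto \int_X \widetilde{\psi}(\xi_0,\ldots,\xi_\bullet,x)\,h(x)\,d\mu_X(x)
\]
is measurable, which is exactly what is required by the definition of $\textup{L}^\infty_{\textup{w}^*}$.

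Next comes the $\Gamma$-equivariance, which is the main point to get right. Recall that the action on the target is $(\gamma\cdot f)(\xi_0,\ldots,\xi_\bullet)(x) = f(\gamma^{-1}\xi_0,\ldots,\gamma^{-1}\xi_\bullet)(\gamma^{-1}x)$. Applying the cocycle identity to the relation $\phi(\gamma\xi,\gamma x)=\sigma(\gamma,x)\phi(\xi,x)$ I would rewrite
\[
\phi(\gamma^{-1}\xi_i,x)=\sigma(\gamma^{-1},\gamma x)\,\phi(\xi_i,\gamma x),
\]
so that, using the $H$-invariance of $\psi$ together with the hypothesis that $\sigma$ takes values in $H^+=\ker\varepsilon$ (hence the sign factor is trivial even when $\psi$ is twisted by $\varepsilon$), I obtain
\[
\widetilde{\psi}(\gamma^{-1}\xi_0,\ldots,\gamma^{-1}\xi_\bullet,x)
= \widetilde{\psi}(\xi_0,\ldots,\xi_\bullet,\gamma x),
\]
which is precisely the formula $(\gamma\cdot\textup{C}^\bullet(\phi)(\psi))=\textup{C}^\bullet(\phi)(\psi)$ after shifting $x\mapsto\gamma^{-1}x$.

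Finally, to see that $\textup{C}^\bullet(\phi)$ is a cochain map I would plug the definition into the homogeneous differential \eqref{eq:coboundary:essentially:bounded}: omitting the $i$-th boundary variable on the source commutes with evaluating $\phi$ at the remaining $\xi_j$'s and therefore with the alternating sum, giving $\textup{C}^{\bullet+1}(\phi)\circ\delta^\bullet=\delta^\bullet\circ\textup{C}^\bullet(\phi)$. The only real obstacle in the whole proof is verifying weak-$^*$ measurability carefully, since the target module is a space of equivalence classes of vector-valued functions; once Fubini is in place the remaining items reduce to bookkeeping with the cocycle identity.
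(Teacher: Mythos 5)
Your proof is correct and follows essentially the same route as the paper: the heart in both cases is the $\Gamma$-invariance computation combining the $\sigma$-equivariance of $\phi$, the $H$-invariance of $\psi$, and the fact that $\sigma$ takes values in $H^+=\ker\varepsilon$ so the sign factor disappears. The only difference is that you spell out the weak-$^*$ measurability and the norm bound, which the paper dismisses as immediate; your extra care there is harmless and arguably an improvement.
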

\begin{proof}
It is immediate to check that $\textup{C}^\bullet(\phi)$ is a norm non-increasing cochain map. Let us prove that it is well-defined, that is  $\textup{C}^\bullet(\phi)(\psi)$ is $\Gamma$-invariant. To that end, we identify $$\textup{L}^\infty_{\text{w}^*}(B(\Gamma)^{\bullet + 1}; \textup{L}^\infty(X)) \cong \textup{L}^\infty(B(\Gamma)^{\bullet + 1} \times X)$$ and we endow it with the diagonal $\Gamma$-action given by
$$
\gamma \cdot f(\xi_0, \cdots, \xi_\bullet, x) = f(\gamma^{-1} \cdot \xi_0, \cdots, \gamma^{-1} \cdot \xi_\bullet, \gamma^{-1} \cdot x) \ ,
$$
where $\gamma \in \, \Gamma$, $\xi_0, \cdots, \xi_\bullet \in \, B(\Gamma)$ and $x \in \, X$, respectively. Since $\textup{C}^\bullet(\phi)(\psi)$ lies in the $\Gamma$-module $\textup{L}^\infty(B(\Gamma)^{\bullet + 1} \times X)$, we only have to prove that it is $\Gamma$-invariant with respect to the previous action. Given $\gamma \in \, \Gamma$, $\xi_0, \cdots, \xi_\bullet \in \, B(\Gamma)$ and $x \in \, X$, we have
\begin{align*}
\gamma \cdot \textup{C}^\bullet(\phi) (\psi)(\xi_0, \cdots, \xi_\bullet) (x) &= \gamma \cdot  \textup{C}^\bullet(\phi)(\psi) (\xi_0, \cdots, \xi_\bullet, x ) \\
&= \textup{C}^\bullet(\phi) (\psi) (\gamma^{-1} \cdot \xi_0, \cdots, \gamma^{-1} \cdot \xi_\bullet, \gamma^{-1} \cdot x ) \\
&= \psi \big(\phi(\gamma^{-1} \cdot \xi_0, \gamma^{-1} \cdot x), \cdots, \phi(\gamma^{-1} \cdot \xi_\bullet, \gamma^{-1} \cdot x) \big) \\
&= \psi \big(\sigma(\gamma^{-1}, x) \phi(\xi_0, x), \cdots , \sigma(\gamma^{-1}, x) \phi(\xi_\bullet, x) \big) \\
&= \varepsilon(\sigma(\gamma^{-1}, x)^{-1}) \cdot \textup{C}^\bullet(\phi) (\psi) (\xi_0, \cdots, \xi_\bullet, x ) \\ &= \textup{C}^\bullet(\phi)(\psi) (\xi_0, \cdots, \xi_\bullet) (x ) \ ,
\end{align*}
where in the fourth step we used the $\sigma$-equivariance of $\phi$ and in the following one the $H$-invariance of $\psi$. Moreover, the fact that $\sigma$ takes value in $H^+$ allows us to conclude the chain of equalities. This proves that the pullback along $\phi$ is a well-defined, whence the thesis.
\end{proof}

As explained before we want to pullback cocycles from the space $\mathcal{B}^\infty(Y^{\bullet+1}; \mathbb{R}_{(\varepsilon)})^H$ to $\textup{L}^\infty(B(\Gamma)^{\bullet+1}; \mathbb{R})^\Gamma$ via $\sigma$ or, equivalently, $\phi$. The pullback along $\phi$ just introduced allows us to go from $\mathcal{B}^\infty(Y^{\bullet+1}; \mathbb{R}_{(\varepsilon)})^H$ to $\textup{L}^\infty_{\text{w}^*}(B(\Gamma)^{\bullet + 1}; \textup{L}^\infty(X))^\Gamma$. The next ingredient in our construction consists in the following integration (compare with~\cite{sauer:articolo, savini3:articolo}):

\begin{deft}\label{def:integral:chain:map}
We define the \emph{integration map} $\textup{I}^\bullet_X$ as follows
$$
\textup{I}_X^\bullet \colon \textup{L}^\infty_{\text{w}^*}(B(\Gamma)^{\bullet + 1}; \textup{L}^\infty(X))^\Gamma \rightarrow \textup{L}^\infty(B(\Gamma)^{\bullet + 1}; \mathbb{R})^\Gamma
$$
$$
\textup{I}_X^\bullet (\psi) (\xi_0, \cdots , \xi_\bullet) = \int_X \psi (\xi_0, \cdots , \xi_\bullet) (x) d\mu_X(x) \ ,
$$
where $\psi \in \, \textup{L}^\infty_{\text{w}^*}(B(\Gamma)^{\bullet + 1}; \textup{L}^\infty(X))^\Gamma$, $\xi_0, \cdots , \xi_\bullet \in \, B(\Gamma)$ and $\mu_X$ denotes the probability measure of the standard Borel probability $\Gamma$-space $X$.
\end{deft}

It is immediate to check that $\textup{I}^\bullet_X$ is a cochain map. The following lemma shows that it is well-defined and norm non-increasing:

\begin{lem}\label{lemma:int:ben:def}
The integration cochain map $\textup{I}^\bullet_X$ is well-defined and norm non-increasing.
\end{lem}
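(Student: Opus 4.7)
The plan is to verify three things: (i) the pointwise formula produces, for almost every $(\xi_0,\dots,\xi_\bullet)$, a well-defined real number; (ii) the resulting function on $B(\Gamma)^{\bullet+1}$ lies in $\textup{L}^\infty$ and is $\Gamma$-invariant; (iii) the supremum-norm estimate $\lVert \textup{I}^\bullet_X(\psi)\rVert_\infty \leq \lVert \psi\rVert_\infty$ holds. Points (i) and (iii) will come together from a direct estimate exploiting the fact that $(X,\mu_X)$ is a \emph{probability} space; point (ii) is where the $\Gamma$-invariance of $\mu_X$ enters the picture.

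For (i) and (iii): identifying $\textup{L}^\infty_{\text{w}^*}(B(\Gamma)^{\bullet+1};\textup{L}^\infty(X))$ with $\textup{L}^\infty(B(\Gamma)^{\bullet+1}\times X)$ as in the proof of Lemma~\ref{lemma:tecnico:pullback}, any representative of $\psi$ satisfies $|\psi(\xi_0,\dots,\xi_\bullet)(x)|\leq \lVert\psi\rVert_\infty$ for almost every $(\xi_0,\dots,\xi_\bullet,x)$. By Fubini, for almost every $(\xi_0,\dots,\xi_\bullet)\in B(\Gamma)^{\bullet+1}$ the function $x\mapsto \psi(\xi_0,\dots,\xi_\bullet)(x)$ is essentially bounded on $X$, hence integrable since $\mu_X$ is a probability measure. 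The same Fubini argument yields measurability of $\textup{I}^\bullet_X(\psi)$. Using $\mu_X(X)=1$, we deduce
\[
\bigl|\textup{I}^\bullet_X(\psi)(\xi_0,\dots,\xi_\bullet)\bigr|\leq \int_X |\psi(\xi_0,\dots,\xi_\bullet)(x)|\,d\mu_X(x)\leq \lVert\psi\rVert_\infty
\]
for almost every $(\xi_0,\dots,\xi_\bullet)$, proving both well-definedness as an element of $\textup{L}^\infty(B(\Gamma)^{\bullet+1};\mathbb{R})$ and the norm estimate.

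For (ii), the $\Gamma$-invariance of $\textup{I}^\bullet_X(\psi)$ is the only nontrivial check. Here $\mathbb{R}$ carries the trivial $\Gamma$-module structure, so $\Gamma$-invariance amounts to $\textup{I}^\bullet_X(\psi)(\gamma^{-1}\xi_0,\dots,\gamma^{-1}\xi_\bullet)=\textup{I}^\bullet_X(\psi)(\xi_0,\dots,\xi_\bullet)$ for every $\gamma\in\Gamma$. Under the identification used above, the $\Gamma$-invariance of $\psi$ reads $\psi(\gamma^{-1}\xi_0,\dots,\gamma^{-1}\xi_\bullet)(x)=\psi(\xi_0,\dots,\xi_\bullet)(\gamma x)$ for almost every $(\xi_0,\dots,\xi_\bullet,x)$. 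Integrating over $X$ and performing the change of variables $x\mapsto \gamma^{-1}x$, which preserves $\mu_X$ since $X$ is a measure-preserving $\Gamma$-space, yields exactly $\textup{I}^\bullet_X(\psi)(\xi_0,\dots,\xi_\bullet)$. The fact that $\textup{I}^\bullet_X$ commutes with the coboundary is built into the definition and follows from linearity of the integral.

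The only subtle point is the Fubini-type compatibility with the weak-$^*$ measurable structure used in defining $\textup{L}^\infty_{\text{w}^*}(B(\Gamma)^{\bullet+1};\textup{L}^\infty(X))$; it is however standard that elements of this space may be represented by genuine bounded measurable functions on the product $B(\Gamma)^{\bullet+1}\times X$, which legitimates the preceding application of Fubini and change of variables.
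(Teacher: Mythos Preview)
Your proof is correct and follows essentially the same approach as the paper: the $\Gamma$-invariance is checked via the diagonal-action identification and the measure-preserving property of the $\Gamma$-action on $X$, and the norm estimate comes from the triangle inequality together with $\mu_X(X)=1$. You are somewhat more careful than the paper in spelling out the Fubini/measurability step and the weak-$^*$ identification, but the arguments coincide.
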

\begin{proof}
First we have to prove that for every $\psi \in \textup{L}^\infty_{\text{w}^*}(B(\Gamma)^{\bullet + 1}; \textup{L}^\infty(X))^\Gamma$, the cocycle $\textup{I}_X^\bullet (\psi)$ is $\Gamma$-invariant.  Given $\gamma \in \, \Gamma$ and $\xi_0, \cdots, \xi_\bullet \in \, B(\Gamma)$, we have
\begin{align*}
\gamma \cdot \textup{I}_X^\bullet(\psi) (\xi_0, \cdots, \xi_\bullet) &= \textup{I}_X^\bullet(\psi) (\gamma^{-1} \cdot \xi_0, \cdots, \gamma^{-1} \cdot \xi_\bullet) \\
&=\int_X \psi(\gamma^{-1} \cdot \xi_0, \cdots, \gamma^{-1} \cdot \xi_\bullet) (x) d\mu_X(x) \\
&= \int_X \psi( \xi_0, \cdots, \cdot \xi_\bullet) (\gamma \cdot x) d\mu_X(x) \\
&=\int_X \psi( \xi_0, \cdots, \cdot \xi_\bullet) (x) d\mu_X(x) \\ &= \textup{I}_X^\bullet(\psi) (\xi_0, \cdots, \xi_\bullet)  \ ,
\end{align*}
where in the fourth step we used the fact that $\Gamma$ acts in a measure-preserving way. This proves that the integration map is well-defined.

Let $\psi \in \textup{L}^\infty_{\text{w}^*}(B(\Gamma)^{\bullet + 1}; \textup{L}^\infty(X))^\Gamma$ and let us check that $\textup{I}^\bullet_X$ is norm non-increasing. The following computation 
\begin{align*}
\lVert \textup{I}^\bullet_X (\psi) \rVert_\infty &= \big\lVert \int_X \psi( \cdot ) (x) d\mu_X(x) \big\rVert_\infty \\
&\leq \int_X \lVert \psi \rVert_\infty  (x) d\mu_X(x) \\
&=\lVert \psi \rVert_\infty \cdot \mu_X(X) \\ &= \lVert \psi \rVert_\infty \ .
\end{align*}
concludes the proof.
\end{proof}

\begin{oss}\label{oss:only:bounded}
Note that the integration map is well-defined only on bounded cochains. There is no such a map for unbounded ones. This fact will be stressed later in the definition of volume of cocycles of uniform lattices (see Remark~\ref{oss:uniforme:diagramma}). 
\end{oss}

We are now able to construct our desired \emph{pullback}.

\begin{deft}\label{def:pullback:map}
Let $\Gamma$ and $H$ be two groups as at the beginning of this section. Fix a standard Borel probability $\Gamma$-space $(X,\mu_X)$. Consider a cocycle $\sigma:\Gamma \times X \rightarrow H^+$ with essentially unique boundary map $\phi:B(\Gamma) \times X \rightarrow Y$. The \emph{pullback map along $\phi$} is the map
$$
\textup{C}^\bullet (\Phi_X) \colon \mathcal{B}^\infty(Y^{\bullet+1}; \mathbb{R}_{(\varepsilon)})^H  \rightarrow \textup{L}^\infty(B(\Gamma)^{\bullet + 1}; \mathbb{R})^\Gamma
$$
defined as the composition 
$$
\textup{C}^\bullet (\Phi_X) = \textup{I}^\bullet_X \circ \textup{C}^\bullet(\phi) \ .
$$
\end{deft}

We will need in the proof of the Milnor-Wood inequalities (Propositions~\ref{prop:vol:estimate:wood} and~\ref{Milnor:wood:style}) that $\textup{C}^\bullet (\Phi_X)$ is norm non-increasing. Lemmas~\ref{lemma:tecnico:pullback} and \ref{lemma:int:ben:def} imply the following:

\begin{lem}\label{lemma:composizione:norm:non:increasing}
The cochain map $\textup{C}^\bullet (\Phi_X)$ is norm non-increasing.
\end{lem}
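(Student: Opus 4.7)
The plan is essentially a one-line composition argument. Since $\textup{C}^\bullet(\Phi_X)$ is defined as the composition $\textup{I}^\bullet_X \circ \textup{C}^\bullet(\phi)$, I would simply invoke the two preceding lemmas: Lemma~\ref{lemma:tecnico:pullback} shows that $\textup{C}^\bullet(\phi)$ is a norm non-increasing cochain map, and Lemma~\ref{lemma:int:ben:def} shows the same for $\textup{I}^\bullet_X$. Since composition of norm non-increasing linear maps between seminormed spaces is norm non-increasing (the operator norm is submultiplicative), the conclusion is immediate.

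More concretely, for any $\psi \in \mathcal{B}^\infty(Y^{\bullet+1}; \mathbb{R}_{(\varepsilon)})^H$, I would chain the two estimates:
\[
\lVert \textup{C}^\bullet(\Phi_X)(\psi) \rVert_\infty = \lVert \textup{I}^\bullet_X(\textup{C}^\bullet(\phi)(\psi)) \rVert_\infty \leq \lVert \textup{C}^\bullet(\phi)(\psi) \rVert_\infty \leq \lVert \psi \rVert_\infty \ ,
\]
where the first inequality uses Lemma~\ref{lemma:int:ben:def} and the second uses Lemma~\ref{lemma:tecnico:pullback}. The fact that $\textup{C}^\bullet(\Phi_X)$ is a cochain map also follows formally from the composition, since both factors are cochain maps (and cochain maps compose to cochain maps).

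There is no real obstacle here: this lemma is bookkeeping that packages the two preceding technical lemmas into the single statement that will be cited in the Milnor–Wood type estimates of Propositions~\ref{prop:vol:estimate:wood} and~\ref{Milnor:wood:style}. The substantive content — well-definedness on $\Gamma$-invariants and the non-increasing property at each stage — has already been verified separately in Lemmas~\ref{lemma:tecnico:pullback} and~\ref{lemma:int:ben:def}.
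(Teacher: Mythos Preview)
Your proposal is correct and matches the paper's approach exactly: the paper does not even write out a proof, but simply states that the lemma follows from Lemmas~\ref{lemma:tecnico:pullback} and~\ref{lemma:int:ben:def}, which is precisely the composition argument you give.
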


\begin{oss}\label{oss:personal:communications}
In personal communications of the second author with Maria Beatrice Pozzetti, she suggested that one could actually get rid of the boundary map to define the pullback. Indeed given a measurable cocycle $\sigma:\Gamma \times X \rightarrow H^+$ it is possible to define the map 
$$
\textup{C}^\bullet_{b}(\sigma):\textup{C}^\bullet_{cb}(H;\R_{(\varepsilon)})^H \rightarrow \textup{C}^\bullet_b(\Gamma;\R)^\Gamma \ ,
$$
$$
\psi \mapsto \textup{C}^\bullet_b(\sigma)(\psi)(\gamma_0,\ldots,\gamma_\bullet):=\int_X \psi(\sigma(\gamma_0^{-1},x)^{-1},\ldots,\sigma(\gamma_\bullet^{-1},x)^{-1})d\mu_X(x) \ .
$$
The previous definition may appear quite strange, but it is actually compatible with the formula appearing in \cite[Theorem 5.6]{sauer:companion} and it is inspired by the cohomological induction defined by Monod and Shalom \cite{MonShal} for measurable cocycles associated to couplings. 

The fact that $\textup{C}^\bullet(\sigma)$ preserves invariant cochains relies on both Equation \ref{cocycleq} and the $\Gamma$-invariance of $\mu_\Omega$. One can check that $\textup{C}^\bullet_b(\sigma)$ is also a cochain map and hence it descends naturally to a map in continuous bounded cohomology
$$
\textup{H}^\bullet_b(\sigma):\textup{H}^\bullet_{cb}(H;\R_{(\varepsilon)}) \rightarrow \textup{H}^\bullet_b(\Gamma,\R) \ , \ \ \textup{H}^\bullet_b(\sigma)([\psi]):=\left[ \textup{C}^\bullet_b(\sigma)(\psi) \right] \ .
$$

Suppose now that $\sigma$ admits an essentially unique boundary map $\phi:B(\Gamma) \times X \rightarrow Y$. Given an element $\psi \in \mathcal{B}^\infty(Y^{\bullet+1};\R_{(\varepsilon)})^H$, by applying \cite[Corollary 2.7]{burger:articolo} we get that $\textup{H}^\bullet_b(\sigma)([\psi])$ admits as a canonical representative $\textup{C}^\bullet(\Phi_X)(\psi)$. In this way we get back our initial approach to pullback via boundary maps. 

Since in the sequel we will need the existence of a boundary map to prove our rigidity results, we prefer to keep using boundary map to define a pullback map in continuous bounded cohomology.
\end{oss}

\section{Volume of Zimmer cocycles in $G^+ = \Isom(\mathbb{H}^n)$}\label{sec:volume:totale}

In this section we extend the classic notion of volume of representations to the much general setting of Zimmer's cocycles. Let $\Gamma <G^+ = \Isom^+(\mathbb{H}^n)$ be a torsion-free non-uniform lattice, where $n \geq 3$ (this dimensional assumption will be assumed all along this section). Let $M = \Gamma \backslash \mathbb{H}^n$ be the complete finite-volume hyperbolic manifold with fundamental group $\Gamma$ and let $N$ be its compact core.  Consider $(X, \mu_X)$ a standard Borel probability $\Gamma$-space. Let $\sigma \colon \Gamma \times X \rightarrow G^+$ be a measurable cocycle which admits an essentially unique boundary map $\phi \colon \mathbb{S}^{n-1} \times X \rightarrow \mathbb{S}^{n-1}$, that is a $\sigma$-equivariant measurable map (see Definition~\ref{boundary}). 

Note that our setup only involves non-uniform lattices. However, all the definitions and results that we will discuss in this section still hold when we deal with uniform ones. For the convenience of the reader, we will stress the slight appropriate modifications needed to adapt our framework to uniform lattices.

\subsection{Definition of volume}\label{sec:def:vol}

In this paragraph we define the notion of \emph{volume} of cocycles. As proved in Section~\ref{sec:lemma:tecnico}, there exists a natural way to pullback a cocycle $c \in \, \mathcal{B}^{\infty}((\mathbb{S}^{n-1})^{\bullet + 1}; \mathbb{R}_\varepsilon)^G$ to $\textup{L}^{\infty}((\mathbb{S}^{n-1})^{\bullet + 1}; \mathbb{R})^{\Gamma}$ via the boundary map $\phi$. Recall that the volume cocycle $\Vol_n$, given in Definition~\ref{def:vol:cociclo}, is an element of $\mathcal{B}^{\infty}((\mathbb{S}^{n-1})^{n + 1}; \mathbb{R}_\varepsilon)^G$. Therefore, we can pullback it from $\mathcal{B}^{\infty}((\mathbb{S}^{n-1})^{\bullet + 1}; \mathbb{R}_\varepsilon)^G$ to $\textup{L}^{\infty}((\mathbb{S}^{n-1})^{\bullet + 1}; \mathbb{R})^{\Gamma}$ via the cochain map $\textup{C}^n(\Phi_X)$ as explained in Section~\ref{sec:lemma:tecnico}. We define the \emph{volume class} associated to $\sigma$ as
$$
\left[\textup{C}^n(\Phi_X)(\Vol_n)\right] \, \in \textup{H}_b^n(\Gamma; \mathbb{R}) \ .
$$

Using the techniques introduced in Section~\ref{sec:rel:coom}, we can map the above element to $\textup{H}_b^n(N, \partial N; \mathbb{R})$. Hence, the composition with the comparison map leads to our definition of volume of cocycles.
\begin{deft}\label{def:per:intro:volume}
Let $\Gamma < G^+$ be a torsion-free non-uniform lattice and let $(X, \mu_X)$ be a standard Borel probability $\Gamma$-space. Let $M = \Gamma \backslash \mathbb{H}^n$ and let $N$ be its compact core. Consider a cocycle $\sigma \colon \Gamma \times X \rightarrow G^+$ which admits an essentially unique boundary map $\phi$. The \emph{volume of} $\sigma$ is defined as follows
$$
\Vol(\sigma) \coloneqq \langle \comp^n \circ \textup{J}^n \left[\textup{C}^n(\Phi_X)(\Vol_n)\right] , [N, \partial N] \ \rangle \in \, \mathbb{R} \ ,
$$
where $\langle \cdot, \cdot \rangle$ is the Kronecker product, $\textup{J}^n$ is the map introduced in Section~\ref{sec:rel:coom} and $[N, \partial N]$ denotes the relative fundamental class of $N$.
\end{deft}

\begin{oss}\label{oss:existence:boundary:map}
Since we defined the volume of a measurable cocycle $\sigma$ using a boundary map, one could naturally ask under which condition such a map actually exists. Monod and Shalom \cite[Proposition 3.3]{MonShal0} proved that it is sufficient to assume that $\sigma$ is \emph{non-elementary}, that is $\sigma$ is not cohomologous to a cocycle taking values into an elementary subgroup of $\textup{PO}^\circ(n,1)$. 
\end{oss}

\subsection{The uniform case}\label{subsec:nuova:uniforme}
As we mentioned in the introduction, our invariant is inspired by the Euler number of self-couplings introduced by Bader, Furman and Sauer~\cite{sauer:articolo}. Indeed, when we adapt the definition of our volume to uniform lattices and we restrict our attention to self-couplings, we recover Bader, Furman and Sauer's Euler number. Moreover, as explained at the end of this section, a careful reading of \cite[Lemma~4.10]{sauer:articolo} shows that one could define a \emph{generalized} Euler number for arbitrary measurable cocycles, dropping both the assumptions on the target and on the measurable space. Then, this latter invariant agrees with our volume in the uniform case. We refer the reader to the end of this section for a discussion about this topic.

Let us explain now how the definition of volume in the non-uniform case can be suitably modified for dealing with uniform lattices. Since $M = \Gamma \backslash \mathbb{H}^n$ is now closed and aspherical, Poincar\'{e} Duality shows that the top dimensional cohomology group $\textup{H}^n(M; \mathbb{R}) \cong \textup{H}^n(\Gamma; \mathbb{R})$ is isomorphic to $\mathbb{R}$ via the isomorphism obtained by the evaluation on the fundamental class $[M]$ of $M$.  Therefore, we get the following definition:
\begin{deft}\label{def:volume:uniforme}
Let $\Gamma < G^+$ be a torsion-free uniform lattice and let $(X, \mu_X)$ be a standard Borel probability $\Gamma$-space. Let $M = \Gamma \backslash \mathbb{H}^n$. Consider a cocycle $\sigma \colon \Gamma \times X \rightarrow G^+$ which admits an essentially unique boundary map $\phi$. Then, we define the \emph{volume of} $\sigma$ to be
$$
\Vol(\sigma) \coloneqq \langle \comp^n \circ g_M^n\left[\textup{C}^n(\Phi_X)(\Vol_n)\right] , [M] \ \rangle \in \, \mathbb{R} \ ,
$$
where $\langle \cdot, \cdot \rangle$ is the Kronecker product, $g^n_M$ the isomorphism of Gromov's mapping theorem and $[M]$ denotes the fundamental class of $M$.
\end{deft}

\begin{oss}\label{oss:uniforme:diagramma}
Note that also in the uniform case our definition of volume of cocycles defined in terms of uniform lattices involves the comparison map. At first glance, the presence of the comparison map may appear rather misleading with respect to the classic definition of volume of representations of uniform lattices (see~\cite{bucher2:articolo}). However, when we deal with measurable cocycles, we need to work with bounded cohomology. Indeed, any representation $\rho \colon \Gamma \rightarrow G^+$ induces pullback maps in both continuous and continuous bounded cohomologies, which fit in the following commutative diagram:
\begin{equation}\label{eq:diagr:comm:unif:repr}
\xymatrix{
\textup{H}_{cb}^n(G; \mathbb{R}_\varepsilon) \ar[rr]^-{\textup{H}_{cb}^n(\rho)} \ar[d]_-{\comp_G^n} && \textup{H}^n_b(\Gamma; \mathbb{R}) \ar[d]_-{\comp^n_\Gamma} \ar@{-->}[rr]^-{g^n_M} && \textup{H}^n_b(M; \mathbb{R}) \ar@{-->}[d]^-{\comp^n} \\
\textup{H}_{c}^n(G; \mathbb{R}_\varepsilon) \ar[rr]_-{\textup{H}^n_c(\rho)} && \textup{H}^n(\Gamma; \mathbb{R}) \ar[rr]_-{\cong} && \textup{H}^n(M; \mathbb{R}) \ ,
}
\end{equation}
Unfortunately, as discussed in Remark~\ref{oss:only:bounded} given a cocycle $\sigma \colon \Gamma \times X \rightarrow G^+$ we are only able to pullback the volume cocycle $\Vol_n$ in bounded cohomology. Indeed, our construction is defined at the level of cochains via the following map
$$
\textup{C}^n (\Phi_X) \colon \mathcal{B}^\infty((\mathbb{S}^{n-1})^{n+1}; \mathbb{R}_{\varepsilon})^G  \rightarrow \textup{L}^\infty((\mathbb{S}^{n-1})^{n + 1}; \mathbb{R})^\Gamma
$$
which is the composition
$$
\textup{C}^n (\Phi_X) = \textup{I}^n_X \circ \textup{C}^n(\phi) \ .
$$
Since the integration cochain map $\textup{I}^n_X$ is well-defined only for bounded cocycles, our pullback provides a class in bounded cohomology.
\end{oss}

As explained above (and in the introduction) one of the source of inspiration of our volume of measurable cocycles is the Euler number of self-couplings introduced by Bader, Furman and Sauer~\cite{sauer:articolo}. However, a careful reading of \cite[Lemma~4.10]{sauer:articolo} shows that one can easily extends Bader-Furman-Sauer's Euler number and their results to arbitrary measurable cocycles. In the sequel, we will refer to the extensions of both their invariant and their results, as \emph{generalized} Euler number and \emph{generalized} Bader-Furman-Sauer's results, respectively. 

For the convenience of the reader, we explain here how to show that our volume of measurable cocycles agrees with the generalized Euler number in the case of uniform lattices. Before introducing the formal definition of the Euler number introduced by Bader, Furman and Sauer~\cite{sauer:articolo}, it is convenient to recall the existence of an \emph{isometric isomorphism} in bounded cohomology due to Monod and Shalom~\cite[Proposition~4.6]{MonShal}. Let $\Gamma$ and $\Lambda$ be two countable discrete groups. Recall by Definition~\ref{couplings} that given a $(\Gamma, \Lambda)$-coupling $(\Omega, m_\Omega)$, there exists an associated right measure equivalence cocycle $\alpha_\Omega \colon \Gamma \times \Lambda \backslash \Omega \rightarrow \Lambda$. It is proved by Monod and Shalom~\cite{MonShal} that $\alpha_\Omega$ induces an isometric isomorphism 
$$
\textup{H}_b^\bullet(\Omega) \colon \textup{H}_b^\bullet(\Lambda; \textup{L}^\infty(\Gamma \backslash \Omega)) \rightarrow \textup{H}^\bullet_b(\Gamma; \textup{L}^\infty(\Lambda \backslash \Omega))
$$
which depends only on the coupling $\Omega$, as suggested by the notation $\uph^\bullet_b(\Omega)$. 

Bader, Furman and Sauer use the isometric isomorphism $\textup{H}_b^\bullet(\Omega)$ in order to define the Euler number associated to self-couplings~\cite{sauer:articolo}. More precisely, given a torsion-free \emph{uniform} lattice $\Gamma < G^+$ and a $(\Gamma, \Gamma)$-coupling $(\Omega, m_\Omega)$, the Euler number associated to $\Omega$ is defined as follows:
$$
\eu(\Omega) = \langle \comp^n \circ g_{M}^n \circ \textup{I}^n_{\Gamma \backslash \Omega} \circ \textup{H}^n_b(\Omega) \circ \textup{H}^n_b(\kappa) \circ \textup{H}^n_{cb}(i)([\Vol_n]) , [M] \rangle \in \, \mathbb{R} \ ,
$$ 
where $\textup{H}^n_b(\kappa) \colon \textup{H}_b^n(\Gamma; \mathbb{R}) \rightarrow \textup{H}_b^n(\Gamma; \textup{L}^\infty(\Gamma \backslash \Omega))$ is the map induced by the change of coefficients, $\textup{H}_{cb}^n(i) \colon \textup{H}_{cb}^n(G^+; \mathbb{R}_\varepsilon) \rightarrow \textup{H}_{b}^n(\Gamma; \mathbb{R})$ is the map induced by the lattice embedding $i \colon \Gamma \rightarrow G^+ < G$ and $M = \Gamma \backslash \mathbb{H}^n$.

Since $\Gamma$ is a torsion-free uniform lattice in $G^+$ also the right measure equivalence cocycle $\alpha_\Omega \colon \Gamma \times \Gamma \backslash \Omega \rightarrow \Gamma$ takes values in $G^+$. Therefore, we can compute its volume via Definition~\ref{def:volume:uniforme}. We show now how our invariant in the uniform case agrees with the \emph{generalized} version of Bader-Furman-Sauer's Euler number. For the convenience of the reader, we will denote by $\Gamma_\ell$ and $\Gamma_r$ the left and the right copy of $\Gamma$, respectively.

Recall that in this setting, the existence of an essentially unique boundary map $\phi \colon \mathbb{S}^{n-1} \times \Gamma_\ell \backslash \Omega \rightarrow \mathbb{S}^{n-1}$ for $\alpha_\Omega$ comes from the general theory of Furstenberg boundaries (see~\cite{furst:articolo73,burger:mozes,MonShal0}). Therefore, there is a well-defined notion of volume of the cocycle $\alpha_\Omega$. Then, we can slightly modify the commutative diagram of \cite[Lemma~4.10]{sauer:articolo} in order to obtain the following one:
\begin{equation}\label{diagr:numero:euler:bfs}
\xymatrix{
\textup{H}^n(\mathcal{B}^\infty((\mathbb{S}^{n-1})^{\bullet+1}; \mathbb{R}_\varepsilon)^{G^+}) \ar[r]^-{\textup{H}^n(\phi)} \ar[d] & \textup{H}^n_b(\Gamma_r; \textup{L}^\infty(\Gamma_\ell \backslash \Omega)) \ar[r]^-{\textup{I}^n_{\Gamma \backslash \Omega}} &  \textup{H}^n_b(\Gamma_r; \mathbb{R}) \ar[rr]^-{\comp^n \circ g^n_M} && \textup{H}^n(M; \mathbb{R}) \\
\textup{H}^n_b(\Gamma_\ell; \mathbb{R}) \ar[r]_-{\textup{H}_b^n(\kappa)} & \textup{H}^n_b(\Gamma_\ell; \textup{L}^\infty(\Gamma_r \backslash \Omega)) \ar[u]_-{\textup{H}^n_b(\Omega)}  \ , &
}
\end{equation}
where the arrow on the left is the composition of the map $\mathfrak{c}^n$ defined in Equation~\eqref{eq:map:end:sec:bm} with the restriction to $\Gamma_\ell$-invariant cochains $\textup{H}^n_{cb}(i)$. Using the previous diagram, one can check that our volume is in fact the generalized Bader-Furman-Sauer's Euler number:
\begin{align*}
\eu(\Omega) &= \langle \comp^n \circ g^n_M \circ \textup{I}^n_{\Gamma \backslash \Omega} \circ \textup{H}^n_b(\Omega) \circ \textup{H}^n_b(\kappa) \circ \textup{H}^n_{cb}(i)([\Vol_n]) , [M] \rangle \\
&=\langle \comp^n \circ g^n_M\left[\textup{C}^n(\Phi_{\Gamma \backslash \Omega})(\Vol_n)\right], [M] \rangle = \Vol(\alpha_\Omega) \ .
\end{align*}

\subsection{Volume of cocycles vs. volume of representations}\label{sec:representations:coupling:volume}

As we mentioned in the introduction, our aim is to define a notion of volume of measurable cocycles which extends the classic volume of representations introduced by Bucher, Burger and Iozzi~\cite{bucher2:articolo}. We formalize here this philosophical approach as follows:

\begin{prop}\label{prop:rep:vol}
Assume $n \geq 3$. Let $\Gamma < G^+=\Isom^+(\hyp^n)$ be a torsion-free non-uniform lattice. Let $\rho \colon \Gamma \rightarrow G^+$ be a non-elementary representation with measurable boundary map $\varphi \colon \mathbb{S}^{n-1} \rightarrow \mathbb{S}^{n-1}$. For any $(X, \mu_X)$ standard Borel probability $\Gamma$-space consider the measurable cocycle 
$
\sigma_\rho \colon \Gamma \times X \rightarrow G^+
$
associated to $\rho$. Then, 
$$
\Vol(\sigma_\rho) = \Vol(\rho) \ .
$$
\end{prop}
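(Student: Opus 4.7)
The plan is to unwind both sides of the claimed equality using the pullback machinery of Section~\ref{sec:lemma:tecnico}. Because $\sigma_\rho(\gamma,x)=\rho(\gamma)$ is independent of $x$, the measurable map
\[
\phi \colon \mathbb{S}^{n-1} \times X \to \mathbb{S}^{n-1}, \qquad \phi(\xi,x) := \varphi(\xi),
\]
is $\sigma_\rho$-equivariant: indeed
\[
\phi(\gamma\xi,\gamma x) = \varphi(\gamma\xi) = \rho(\gamma)\varphi(\xi) = \sigma_\rho(\gamma,x)\,\phi(\xi,x).
\]
Thus $\phi$ serves as an essentially unique boundary map for $\sigma_\rho$, so that $\Vol(\sigma_\rho)$ is well-defined in the sense of Definition~\ref{def:per:intro:volume}.

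Next I would compute the pullback cocycle $\textup{C}^n(\Phi_X)(\Vol_n)$ explicitly. By Definitions~\ref{def:pullback:via:phi} and~\ref{def:integral:chain:map}, together with the $x$-independence of $\phi$, for almost every $(\xi_0,\dots,\xi_n)\in(\mathbb{S}^{n-1})^{n+1}$ one has
\[
\textup{C}^n(\Phi_X)(\Vol_n)(\xi_0,\dots,\xi_n) = \int_X \Vol_n(\varphi(\xi_0),\dots,\varphi(\xi_n))\,d\mu_X(x) = \Vol_n(\varphi(\xi_0),\dots,\varphi(\xi_n)),
\]
since $\mu_X(X)=1$. In other words, $\textup{C}^n(\Phi_X)(\Vol_n)$ coincides with the classical pullback $\varphi^{*}\Vol_n$ used in the definition of the volume of the representation $\rho$.

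It then remains to identify the bounded cohomology classes. By the functorial description of bounded cohomology via amenable boundary actions (Theorem~\ref{teor:monod:resolution:L:inf} and the implementation of pullbacks through boundary maps as in~\cite{burger:articolo}), the class $[\varphi^{*}\Vol_n]\in \textup{H}_b^n(\Gamma;\mathbb{R})$ represents precisely $\textup{H}_b^n(\rho)[\Vol_n]$, which by~\cite{bucher2:articolo} is the class whose pairing with $[N,\partial N]$ (after applying $\comp^n\circ \textup{J}^n$) defines $\Vol(\rho)$. Hence
\[
\Vol(\sigma_\rho) = \langle \comp^n\circ \textup{J}^n[\textup{C}^n(\Phi_X)(\Vol_n)],[N,\partial N]\rangle = \langle \comp^n\circ \textup{J}^n[\varphi^{*}\Vol_n],[N,\partial N]\rangle = \Vol(\rho).
\]

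The only delicate point, which is really bookkeeping rather than a genuine obstacle, is to ensure that the two different routes from $\textup{H}_{cb}^n(G;\mathbb{R}_\varepsilon)$ to $\textup{H}_b^n(\Gamma;\mathbb{R})$—one through the pullback map induced by $\rho$ on standard bounded cochain complexes, the other through the boundary map $\varphi$ on the $\textup{L}^\infty_{\mathrm{w}^*}$-resolution—yield the same cohomology class; this is exactly the content of the canonicity of the pullback in bounded cohomology in presence of amenable boundary actions. Once this identification is in place, the proposition follows immediately.
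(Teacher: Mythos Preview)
Your argument is correct and follows the same approach as the paper: you construct the $x$-independent boundary map $\phi(\xi,x)=\varphi(\xi)$, observe that the integration over $X$ is then trivial so that $\textup{C}^n(\Phi_X)(\Vol_n)=\textup{C}^n(\varphi)(\Vol_n)$, and invoke Burger--Iozzi to identify $[\textup{C}^n(\varphi)(\Vol_n)]$ with $\textup{H}^n_b(\rho)[\Vol_n]$. The paper presents the same computation via a small commutative diagram rather than writing out the integral explicitly, but the content is identical.
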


\begin{proof}
Recall that the measurable boundary map $\varphi$ of $\rho$ is essentially unique because of the doubly ergodic action of $\Gamma$ on $\mathbb{S}^{n-1}$. Therefore, we construct an essentially unique boundary map $\phi$ for $\sigma_\rho$ as follows:
$$
\phi \colon \mathbb{S}^{n-1} \times X \rightarrow \mathbb{S}^{n-1} , \hspace{5pt} \phi(\xi, x) \coloneqq \varphi(\xi)\ ,
$$
where $\xi \in \, \mathbb{S}^{n-1}$ and $x \in \, X$. Since the boundary map $\phi$ does not depend on the second variable $x \in \, X$, one can check that the following diagram commutes
$$
\xymatrix{
\mathcal{B}^\infty((\mathbb{S}^{n-1})^{n + 1}; \mathbb{R}_\varepsilon)^G \ar[rr]^{\textup{C}^n(\phi)} \ar[dr]_-{\textup{C}^n(\varphi)} &&  \textup{L}^\infty_{\text{w}^*}((\mathbb{S}^{n-1})^{n + 1}; \textup{L}^\infty(X))^\Gamma \ar[dl]^-{\textup{I}^n_X}\\
& \textup{L}^\infty((\mathbb{S}^{n-1})^{n + 1}; \mathbb{R})^\Gamma \ .
}
$$
Recall that by Definition \ref{def:pullback:map} we have $\textup{C}^n(\Phi_X) = \textup{I}^n_X \circ \textup{C}^n(\phi)$ and so the commutativity of the diagram above implies
$$
\textup{C}^n(\Phi_X)(\Vol_n) =\textup{C}^n(\varphi)(\Vol_n) \ .
$$
Since Buger and Iozzi~\cite{burger:articolo} proved that $\textup{C}^n(\varphi)(\Vol_n)$ is a natural representative of the cohomology class $\textup{H}^n_b(\rho)(\left[\Vol\right])$, we get the following
\begin{align*}
\Vol(\sigma_\rho) &= \langle \comp^n \circ \textup{J}^n \left[\textup{C}^n(\Phi_X)(\Vol_n)\right], [N, \partial N] \rangle \\
&= \langle \comp^n \circ \textup{J}^n \left[\textup{C}^n(\varphi)(\Vol_n)\right], [N, \partial N] \rangle \\
&= \langle \comp^n \circ \textup{J}^n \circ \textup{H}^n_b(\rho)(\left[\Vol_n \right]), [N, \partial N] \rangle  = \Vol(\rho) \ .
\end{align*}
This concludes the proof.
\end{proof}
\begin{oss}\label{oss:everything:uniform:case}
All the results regarding non-uniform lattices can be easily translated to the setting of uniform ones. For convenience of the reader we show here how to extend the previous result to that setting. Keeping the same notation of Proposition~\ref{prop:rep:vol}, we assume now that $\Gamma < G^+$ is a torsion-free uniform lattice. Note that the volume cocycle $\Vol_n$ can be though of as an element of both $\textup{C}^n_{cb}(G; \mathbb{R}_\varepsilon)$ and $\textup{C}^n_c(G; \mathbb{R}_\varepsilon)$. In order to distinguish these two cases we will denote $\Vol_n^b$ and $\Vol_n$ the continuous bounded and the continuous cocycles, respectively. Then, the proof in the presence of non-uniform lattices adapts to the uniform ones as follows:
\begin{align*}
\Vol(\rho) &= \langle \textup{H}^n_{c}(\rho)([\Vol_n]), [M] \rangle \\
                &=\langle \comp^n \circ g_M^n \circ \hcb^n(\rho)([\Vol_n^b]),[M] \rangle \\
                &=\langle \comp^n \circ g_M^n [\textup{C}^n(\varphi)(\Vol_n^b)], [M] \rangle \\
                &=\langle \comp^n \circ g_M^n [\textup{C}^n(\Phi_X)(\Vol^b_n)],[M] \rangle= \Vol(\sigma_\rho) \ ,
\end{align*}
where we used the commutativity of Diagram (\ref{eq:diagr:comm:unif:repr}) completed with the dotted arrows. 

\end{oss}
\begin{oss}
We have just proved that the volume of a non-elementary representation $\rho$ coincides with the volume of the associated cocycle $\sigma_\rho$. On the other hand, it is well-known that elementary representations have vanishing volume. Therefore, it could be interesting to investigate which cocycles play the same role. As we will see later in Definition~\ref{def:lementary:cocycle}, \emph{reducible cocycles} will provide an example of such a family.
\end{oss}

Recall that the classic volume of representations satisfies the following property:
$$
\Vol(g \rho g^{-1}) = \varepsilon(g) \cdot \Vol(\rho) 
$$
for every $g \in \, G$ and every representation $\rho \colon \Gamma \rightarrow G^+$. In particular, when $g \in \, G^+$ the volume of $\rho$ is constant along the conjugacy class of the representation $\rho$. 

Following the analogy between volume of cocycles and volume of representations, we prove now a similar property in our setting. Consider a measurable function $f \colon X \rightarrow G$, where $(X, \mu_X)$ is a probability space. We define the \emph{sign of} $f$ as the function $\varepsilon(f) \colon X \rightarrow \{-1, 1\}$ given by  $\varepsilon(f)(x) = \varepsilon(f(x))$. In the case in which $\varepsilon(f)$ is almost everywhere constant, we will simply denote the real number identified with the essential image of $\varepsilon(f)$ by $\varepsilon(f)$ itself.

Then, we have the following:

\begin{prop}\label{gplus:cohomology}
Assume $n \geq 3$. Let $\Gamma < G^+=\Isom^+(\hyp^n)$ be a torsion-free non-uniform lattice and let $(X, \mu_X)$ be a standard Borel probability $\Gamma$-space. Consider a measurable function $f \colon X \rightarrow G$ such that its sign $\varepsilon(f)$ is almost everywhere constant. Let $\sigma \colon \Gamma \times X \rightarrow G^+$ be a measurable cocycle with an essentially unique boundary map $\phi \colon \mathbb{S}^{n-1} \times X \rightarrow \mathbb{S}^{n-1}$. Then, we have
$$
\Vol(\sigma^f) = \varepsilon(f) \cdot \Vol(\sigma) \ .
$$
In particular, if $f \colon X \rightarrow G^+$, we have that the volume is constant along the $G^+$-cohomology class of $\sigma$.
\end{prop}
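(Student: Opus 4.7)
The plan is to reduce everything to a direct computation at the cochain level, exploiting the fact that $\Vol_n$ is $G$-equivariant with respect to the sign action on $\mathbb{R}_\varepsilon$, while the sign $\varepsilon(f)$ is essentially constant and can therefore be pulled out of the integration over $X$.

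First I would verify that $\sigma^f$ still takes values in $G^+$: using that $\varepsilon$ is a homomorphism and that $\sigma(\gamma,x)\in G^+$, one has $\varepsilon(\sigma^f(\gamma,x))=\varepsilon(f(\gamma x))^{-1}\varepsilon(f(x))=\varepsilon(f)^2=1$ almost everywhere, so the volume of $\sigma^f$ is indeed defined. Next, following Definition~\ref{boundary:map:cohomology}, I would check that $\phi^f(\xi,x):=f(x)^{-1}\phi(\xi,x)$ is an essentially unique $\sigma^f$-equivariant boundary map for $\sigma^f$. The equivariance follows from a short calculation using that $\phi$ is $\sigma$-equivariant:
\[
\phi^f(\gamma\xi,\gamma x)=f(\gamma x)^{-1}\sigma(\gamma,x)f(x)\,f(x)^{-1}\phi(\xi,x)=\sigma^f(\gamma,x)\phi^f(\xi,x).
\]

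Now I would expand the pullback cochain $\textup{C}^n(\Phi_X^f)(\Vol_n)$ constructed from $\phi^f$. Using the $G$-equivariance of $\Vol_n$ with respect to the sign action, together with $\varepsilon(f(x)^{-1})=\varepsilon(f(x))$, we get
\[
\Vol_n(\phi^f(\xi_0,x),\ldots,\phi^f(\xi_n,x))=\varepsilon(f(x))\cdot\Vol_n(\phi(\xi_0,x),\ldots,\phi(\xi_n,x))
\]
almost everywhere. Since $\varepsilon(f)$ is essentially constant, the scalar $\varepsilon(f)$ factors out of the integration $\textup{I}^n_X$, giving the cochain-level identity
\[
\textup{C}^n(\Phi_X^f)(\Vol_n)=\varepsilon(f)\cdot\textup{C}^n(\Phi_X)(\Vol_n).
\]

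Finally, since the maps $\textup{J}^n$, $\comp^n$ and the Kronecker pairing with $[N,\partial N]$ are all $\mathbb{R}$-linear, the scalar $\varepsilon(f)$ propagates through Definition~\ref{def:per:intro:volume} to yield $\Vol(\sigma^f)=\varepsilon(f)\cdot\Vol(\sigma)$. The ``in particular'' statement is then immediate: if $f$ takes values in $G^+$ then $\varepsilon(f)=1$, so the volume is invariant along the $G^+$-cohomology class of $\sigma$. There is no serious obstacle here; the only point worth care is that the sign cancellation requires the \emph{essential} constancy of $\varepsilon(f)$, without which the scalar could not be extracted from the integral and the statement would fail.
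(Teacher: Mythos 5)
Your proof is correct and follows essentially the same route as the paper: expand $\textup{C}^n(\Phi_X^f)(\Vol_n)$ at the cochain level, use the $\varepsilon$-equivariance of $\Vol_n$ to produce the factor $\varepsilon(f(x))$, pull the essentially constant sign out of the integral, and propagate it through the Kronecker pairing by linearity. Your preliminary check that $\sigma^f$ still lands in $G^+$ (which also uses the essential constancy of $\varepsilon(f)$) is a small addition the paper leaves implicit, but the argument is otherwise identical.
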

\begin{proof}
Recall by Definitions~\ref{def:coomologhi:cicli} and~\ref{boundary:map:cohomology} that $\sigma^f$ is defined as
$$
\sigma^f(\gamma, x) = f(\gamma x)^{-1} \sigma(\gamma, x) f(x) \ ,
$$
for all $\gamma \in \Gamma$ and for almost every $x \in  X$, and its essentially unique boundary map associated is given by
$$
\phi^f \colon \mathbb{S}^{n-1} \times X \rightarrow \mathbb{S}^{n-1} \ , \hspace{10pt} \phi^f(\xi, x) = f(x)^{-1} \phi(\xi, x) \ ,
$$
for almost every $\xi \in \, \mathbb{S}^{n-1}$ and $x \in \, X$. Let us denote by $\Phi_X^f$ the composition of the pullback along by $\phi^f$ with the integration. Then, the volume of $\sigma^f$ can be computed as follows
\begin{align*}
\textup{C}^n(\Phi^f_X)(\Vol_n)(\xi_0, \cdots, \xi_n) &= \int_X \Vol_n(\phi^f(\xi_0, x), \cdots, \phi^f(\xi_n, x)) d \mu_X \\
&= \int_X \Vol_n (f(x)^{-1} \phi(\xi_0, x), \cdots, f(x)^{-1}\phi(\xi_n, x)) d\mu_X \\
&= \int_X \varepsilon(f(x)) \cdot \Vol_n(\phi(\xi_0, x), \cdots, \phi(\xi_n, x)) d \mu_X \\
&= \varepsilon(f) \int_X \Vol_n(\phi(\xi_0, x), \cdots, \phi(\xi_n, x)) d \mu_X \\
&= \varepsilon(f) \cdot \textup{C}^n(\Phi_X)(\Vol)(\xi_0, \cdots, \xi_n) \ .
\end{align*}

Note that in the step from the third and the fourth line, we used the hypothesis on the sign $\varepsilon(f)$ which is almost everywhere constant. Using the previous computation and the linearity of the Kronecker product we get the desired equality
\begin{align*}
\Vol(\sigma^f) &= \langle \comp^n \circ \textup{J}^n\left[ \textup{C}^n(\Phi_X^f)(\Vol_n)\right], [N, \partial N] \rangle \\
&= \varepsilon(f) \cdot \langle \comp^n \circ \textup{J}^n\left[\textup{C}^n(\Phi_X)(\Vol_n)\right], [N, \partial N] \rangle \\
&= \varepsilon(f) \cdot \Vol(\sigma) \ .
\end{align*}

\end{proof}

Using both Proposition~\ref{prop:rep:vol} and Proposition~\ref{gplus:cohomology} we obtain the following: 

\begin{cor}\label{cor:max:vol}
Assume $n \geq 3$. Let $\Gamma < G^+=\Isom^+(\hyp^n)$ be a torsion-free non-uniform lattice. Fix $(X,\mu_X)$ a standard Borel probability $\Gamma$-space. If we denote by $i:\Gamma \rightarrow G^+$ the standard lattice embedding, we have
$$
\Vol(\sigma)=\Vol(\Gamma \backslash \hyp^n)
$$
for every cocycle in the $G^+$-cohomology class of $\sigma_i$. 
\end{cor}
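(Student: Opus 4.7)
The plan is to combine Proposition~\ref{prop:rep:vol} with Proposition~\ref{gplus:cohomology} in a very direct way. First I would compute $\Vol(\sigma_i)$: since the standard lattice embedding $i \colon \Gamma \rightarrow G^+$ is a non-elementary representation admitting the identity as measurable boundary map $\mathbb{S}^{n-1} \rightarrow \mathbb{S}^{n-1}$, Proposition~\ref{prop:rep:vol} applies and gives $\Vol(\sigma_i) = \Vol(i)$. The classical computation of the volume of a representation evaluated on the standard embedding then yields $\Vol(i) = \Vol(\Gamma \backslash \hyp^n)$: indeed, tracing through the definitions, the pullback of the volume class along $i$ coincides with the bounded volume class of the manifold $M = \Gamma \backslash \hyp^n$, which pairs with $[N, \partial N]$ to give the Riemannian volume $\Vol(M)$ (this is essentially~\cite[Proposition~2]{bucher2:articolo}).

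Next, given any cocycle $\sigma$ in the $G^+$-cohomology class of $\sigma_i$, by Definition~\ref{def:coomologhi:cicli} there exists a measurable map $f \colon X \rightarrow G^+$ such that $\sigma = (\sigma_i)^f$. Since $f$ takes values in $G^+ = \ker \varepsilon$, its sign satisfies $\varepsilon(f) \equiv 1$ almost everywhere, so in particular $\varepsilon(f)$ is essentially constant. Proposition~\ref{gplus:cohomology} then applies and gives
\[
\Vol(\sigma) = \Vol((\sigma_i)^f) = \varepsilon(f) \cdot \Vol(\sigma_i) = \Vol(\sigma_i) = \Vol(\Gamma \backslash \hyp^n) \ .
\]

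Since each of the three steps (the reduction from the cocycle to the representation via Proposition~\ref{prop:rep:vol}, the classical identification $\Vol(i) = \Vol(\Gamma \backslash \hyp^n)$, and the invariance along $G^+$-cohomology classes via Proposition~\ref{gplus:cohomology}) is already established in the preceding sections, the proof is essentially a bookkeeping argument and I do not expect any serious obstacle. The only subtle point is to make sure that $\sigma_i$ genuinely admits an essentially unique boundary map so that Proposition~\ref{prop:rep:vol} is applicable; this follows from the fact that $i$ is non-elementary together with Remark~\ref{oss:existence:boundary:map}, which invokes~\cite[Proposition~3.3]{MonShal0}.
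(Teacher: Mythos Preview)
Your proof is correct and follows exactly the same approach as the paper: invoke Proposition~\ref{prop:rep:vol} to reduce to the representation $i$, use the known computation $\Vol(i)=\Vol(\Gamma\backslash\hyp^n)$ (the paper cites \cite[Lemma~2]{bucher2:articolo} rather than Proposition~2), and then apply Proposition~\ref{gplus:cohomology} for invariance along the $G^+$-cohomology class. Your additional remark checking that $\sigma_i$ admits an essentially unique boundary map is a welcome point of care that the paper leaves implicit.
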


\begin{proof}
By~\cite[Lemma 2]{bucher2:articolo} we know that the standard lattice embedding statisfies $\Vol(i)=\Vol(\Gamma \backslash \hyp^n)$. Since the volume is constant along the $G^+$-cohomology class by Proposition~\ref{gplus:cohomology}, the thesis follows. 
\end{proof}

\begin{oss}\label{oss:commento:estensione:mostow}
At first sight it may seem rather unsatisfactory having the invariance of the volume only over a $G^+$-cohomology class of the standard embedding lattice. However, if $\sigma_i$ denotes the cocycle associated to the standard lattice embedding, Proposition~\ref{gplus:cohomology} ensures that
$$
\lvert \Vol(\sigma^f_i) \rvert = \Vol(\Gamma \slash \mathbb{H}^n) \ ,
$$
for every measurable function $f:X \rightarrow G$ with almost everywhere constant sign $\varepsilon(f)$. Therefore, it is immediate to check that, when we restrict to cocycles associated to representations, Proposition~\ref{gplus:cohomology} and Corollary~\ref{cor:max:vol} show the same behaviour as the one discussed by Bucher, Burger and Iozzi~\cite{bucher2:articolo}. This shows that Corollary~\ref{cor:max:vol} may be interpreted as a natural extension of classic result on representations to the much wider setting of measurable cocycles.
\end{oss}

\subsection{Volume rigidity for Zimmer's cocycles}\label{sec:rigidity:volume}
In this section we establish and study a Milnor-Wood type inequality for volume of cocycles. Remarkably, we will show that the maximal value is attained at cocycles which are cohomologous to the one associated to the standard lattice embedding $i \colon \Gamma \rightarrow G^+$ via a measurable function with essentially constant sign. 

Note that our result in the non-uniform setting extends the Milnor-Wood type inequality for volume of representations proved by Bucher, Burger and Iozzi~\cite[Theorem~1]{bucher2:articolo}. Moreover, recall by Section~\ref{subsec:nuova:uniforme} that in the uniform case our volume of measurable cocycles agrees with the generalized version of the Euler number introduced by Bader, Furman and Sauer~\cite{sauer:articolo}. Therefore, here we provide an alternative proof of the generalized version of the Milnor-Wood type inequality~\cite[Corollary~4.9]{sauer:articolo} for measurable cocycles.

\begin{prop}\label{prop:vol:estimate:wood}
Assume $n \geq 3$. Let $\Gamma < G^+=\Isom^+(\hyp^n)$ be a torsion-free non-uniform lattice and let $(X, \mu_X)$ be a standard probability $\Gamma$-space. Let $\sigma \colon \Gamma \times X \rightarrow G^+$ be a measurable cocycle with essentially unique boundary map $\phi \colon \mathbb{S}^{n-1} \times X \rightarrow \mathbb{S}^{n-1}$. If $M = \Gamma \backslash \mathbb{H}^n$, then we have
$$
| \Vol(\sigma)| \leq \Vol(M) \ .
$$
\end{prop}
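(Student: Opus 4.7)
The plan is to exploit Proposition~\ref{prop:formula:volume}, which identifies the transferred pullback class $\trans^n_\Gamma[\textup{C}^n(\Phi_X)(\Vol_n)]$ with the multiple $(\Vol(\sigma)/\Vol(M))\,[\Vol_n]$ inside the one-dimensional space $\hcb^n(G; \R_\varepsilon)$, and then to bound this proportionality constant via the norm non-increasing character of both the pullback and the transfer. In this way the Milnor--Wood inequality reduces to a purely norm-theoretic comparison in top degree bounded cohomology.

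First, I would check that the cochain map $\textup{C}^n(\Phi_X)$ from $\mathcal{B}^\infty((\bbS^{n-1})^{n+1}; \R_\varepsilon)^G$ to $\textup{L}^\infty((\bbS^{n-1})^{n+1}; \R)^\Gamma$ is norm non-increasing: this is precisely Lemma~\ref{lemma:composizione:norm:non:increasing}. Similarly, the cochain transfer map $\widehat{\trans}^n_\Gamma$ of Definition~\ref{def:trans:map} is norm non-increasing, since it is defined by averaging against the normalized $G$-invariant probability measure on $\Gamma \backslash G$ (the sign $\varepsilon$ takes values in $\{\pm 1\}$). Passing to bounded cohomology, the induced operators inherit this property, so one gets
\[
\bigl\| \trans^n_\Gamma\bigl[\textup{C}^n(\Phi_X)(\Vol_n)\bigr] \bigr\|_\infty \leq \|\Vol_n\|_\infty = v_n \ ,
\]
where $v_n$ denotes the volume of a regular ideal $n$-simplex in $\hyp^n$.

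Next, Proposition~\ref{prop:formula:volume} exhibits a pointwise representative of the transferred class that agrees almost everywhere on $(\bbS^{n-1})^{n+1}$ with $(\Vol(\sigma)/\Vol(M)) \cdot \Vol_n$. Consequently, as elements of $\hcb^n(G;\R_\varepsilon)$,
\[
\trans^n_\Gamma\bigl[\textup{C}^n(\Phi_X)(\Vol_n)\bigr] = \frac{\Vol(\sigma)}{\Vol(M)}\,[\Vol_n] \ .
\]
Since the comparison map $\hcb^n(G; \R_\varepsilon) \to \hc^n(G; \R_\varepsilon)$ is an isometric isomorphism in top degree (see Section~\ref{sec:cbc}) and $\Vol_n$ is the unique cocycle representative of its class, we have $\|[\Vol_n]\|_\infty = v_n$. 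Comparing with the previous norm estimate gives $|\Vol(\sigma)/\Vol(M)|\cdot v_n \leq v_n$, hence $|\Vol(\sigma)| \leq \Vol(M)$.

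The main obstacle in this strategy is really concentrated in Proposition~\ref{prop:formula:volume}, which provides the crucial proportionality identity between the transferred pullback class and the generator $[\Vol_n]$; once that is available, the rest of the proof reduces to tracking norms along norm non-increasing maps. One could instead run the argument at the Kronecker pairing level, writing $|\Vol(\sigma)| \leq \|\comp^n \circ \textup{J}^n[\textup{C}^n(\Phi_X)(\Vol_n)]\|_\infty \cdot \|[N,\partial N]\|_1$ and invoking the Gromov--Thurston proportionality $\|[N,\partial N]\|_1 = \Vol(M)/v_n$; however, the transfer-map route keeps the argument self-contained and fits naturally into the bounded cohomology framework set up in Sections~\ref{sec:bounded:cohomology} and~\ref{sec:lemma:tecnico}.
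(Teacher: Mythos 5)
Your overall strategy is the paper's, but as written it is circular: you invoke Proposition~\ref{prop:formula:volume} to obtain the identity $\trans^n_\Gamma[\textup{C}^n(\Phi_X)(\Vol_n)] = (\Vol(\sigma)/\Vol(M))\,\omega_n^b$ (where $\omega_n^b = [\Vol_n] \in \hcb^n(G;\R_\varepsilon)$), whereas in the paper that identity is \emph{established inside} the proof of Proposition~\ref{prop:vol:estimate:wood}, and Proposition~\ref{prop:formula:volume} is then deduced from it (its proof literally begins ``as already proved in Proposition~\ref{prop:vol:estimate:wood}''). The one-dimensionality of $\hcb^n(G;\R_\varepsilon)$ only gives you $\trans^n_\Gamma[\textup{C}^n(\Phi_X)(\Vol_n)] = \lambda\,\omega_n^b$ for \emph{some} $\lambda \in \R$; the substantive point is the identification $\lambda = \Vol(\sigma)/\Vol(M)$, and nothing in your write-up supplies it independently.

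To close the gap you need the commutative square of transfer maps from Diagram~(\ref{eq:diagramma:commutativo:trasfer:maps}): applying $\comp^n$ gives $\comp^n \circ \trans^n_\Gamma[\textup{C}^n(\Phi_X)(\Vol_n)] = \lambda\,\omega_n$ in $\textup{H}^n_c(G;\R_\varepsilon)$; since $\tau^n_{\text{dR}}$ sends the class of the relative volume form $\omega_{N,\partial N}$ to $\omega_n$ and is injective in top degree (Remark~\ref{tau:iso:grado:massimo}), commutativity forces $\comp^n \circ \textup{J}^n[\textup{C}^n(\Phi_X)(\Vol_n)] = \lambda\,[\omega_{N,\partial N}]$, and pairing with $[N,\partial N]$ together with $\langle \omega_{N,\partial N}, [N,\partial N]\rangle = \Vol(M)$ yields $\Vol(\sigma) = \lambda\,\Vol(M)$. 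Once $\lambda$ is identified this way, your second half is exactly the paper's: $|\lambda| = \lVert \trans^n_\Gamma[\textup{C}^n(\Phi_X)(\Vol_n)]\rVert_\infty / \lVert \omega_n^b\rVert_\infty \leq 1$, because $\textup{C}^n(\Phi_X)$ (Lemma~\ref{lemma:composizione:norm:non:increasing}) and $\trans^n_\Gamma$ are norm non-increasing and $\lVert \omega_n^b \rVert_\infty = \lVert \Vol_n\rVert_\infty$ in the absence of coboundaries in top degree. (Your alternative route via $\lVert [N,\partial N]\rVert_1 = \Vol(M)/v_n$ would also work, but it imports the Gromov--Thurston proportionality principle, which this argument is designed to avoid.)
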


\begin{proof}
Let us fix an arbitrary compact core $N$ of $M$. Recall by Diagram (\ref{eq:diagramma:commutativo:trasfer:maps}) that there exists the following commutative square:
\begin{equation}\label{eq:diagram:miln:wood:vol}
\xymatrix{
\textup{H}^n(\mathcal{B}^\infty((\mathbb{S}^{n-1})^{\bullet +1}; \mathbb{R}_\varepsilon)^G) \ar[d]_-{\textup{H}^n(\Phi_X)} \\
\textup{H}^n_b(\Gamma; \mathbb{R})  \ar[d]_-{\textup{J}^n}  \ar[rr]^-{\trans_\Gamma^n} && \textup{H}^n_{cb}(G; \mathbb{R}_\varepsilon) \ar[dd]^-{\comp^n} \\
\textup{H}^n_b(N, \partial N; \mathbb{R}) \ar[d]_-{\comp^n} \\
\textup{H}^n(N; \partial N) \ar[rr]_-{\tau^n_{\text{dR}}} && \textup{H}^n_c(G; \mathbb{R}_\varepsilon) \ . 
}
\end{equation}
Here the transfer maps are the one introduced in Section~\ref{sec:trans}. For ease of notation, we set $\omega_n^b = [\Vol_n] \in \, \textup{H}^n_{cb}(G; \mathbb{R}_\varepsilon)$ and $\omega_n = [\Vol_n] \in \, H_c^n(G; \mathbb{R}_\varepsilon)$. Since by~\cite[Proposition~2]{bucher2:articolo} we have $\textup{H}^n_{cb}(G; \mathbb{R}_\varepsilon) \cong \mathbb{R}\omega_n^b$, there must exist a $\lambda \in \, \mathbb{R}$ such that 
\begin{equation}\label{equation:trans:lambda}
\trans_\Gamma^n \left[ \textup{C}^n(\Phi_X) (\Vol_n) \right] = \lambda \omega_n^b \ .
\end{equation}
If we now apply the comparison map to both sides, we get
$$
\comp^n \circ \trans^n_\Gamma \left[\textup{C}^n(\Phi_X) (\Vol_n) \right] = \lambda \omega_n \ .
$$

We denote by $\omega_{N, \partial N}$ the relative volume form in the relative de Rham complex, that is the unique form such that $\langle \omega_{N, \partial N}, [N, \partial N] \rangle = \Vol(M) \ .$ Then, by construction, we have $\tau^n_{\text{dR}}([\omega_{N, \partial N}]) = \omega_n$. Since Diagram (\ref{eq:diagram:miln:wood:vol}) commutes, the following chain of equalities holds:
\begin{align*}
\tau^n_{\text{dR}} \big( \comp^n \circ \textup{J}^n \left[ \textup{C}^n(\Phi_X) (\Vol_n)\right] \big) &= \comp^n \circ \trans_\Gamma^n \left[ \textup{C}^n (\Phi_X)(\Vol_n)\right] \\ &= \lambda \omega_n \\ &= \tau_{\text{dR}}^n(\lambda [\omega_{N, \partial N}]) \ .
\end{align*}
Since $\tau_{\text{dR}}^n$ is injective as recalled in Remark~\ref{tau:iso:grado:massimo}, we have 
$$
\comp^n \circ \textup{J}^n \left[ \textup{C}^n(\Phi_X) (\Vol_n)\right] = \lambda [\omega_{N, \partial N}] \ .
$$
Using the previous equality, it is immediate to check that the volume of $\sigma$ may be expressed as follows
\begin{align*}
\Vol(\sigma) &= \langle \comp^n \circ  \textup{J}^n \left[ \textup{C}^n(\Phi_X) (\Vol_n)\right], [N, \partial N] \rangle \\
&= \lambda \langle [\omega_{N, \partial N}], [N, \partial N] \rangle = \lambda \Vol(M) \ .
\end{align*}
As a consequence of Equation (\ref{equation:trans:lambda}), taking the absolute value on both sides, we get
$$
\frac{| \Vol(\sigma) |}{| \Vol(M) |}  = |\lambda | =\frac{\lVert \trans_\Gamma^n \left[\textup{C}^n(\Phi_X)(\Vol_n) \right]\lVert_\infty}{\lVert \omega^b_n \rVert_\infty} \ .
$$

Therefore, we reduce ourselves to prove that 
$$
\frac{\lVert \trans_\Gamma^n \left[ \textup{C}^n(\Phi_X)(\Vol_n) \right] \rVert_{\infty}}{\lVert \omega_n^b\rVert_{\infty}} \leq 1 \ ,
$$
Recall that $\lVert \omega_n^b \rVert_\infty = \lVert \Vol_n \rVert_\infty$ because there are no coboundaries in $\textup{H}_{cb}^n(G; \mathbb{R}_\varepsilon)$, as proved by Bucher, Burger and Iozzi~\cite[Proposition~2]{bucher2:articolo}. Moreover, note that $\trans_\Gamma^n$ is norm non-increasing by definition. Therefore, the claim follows if we show that
$$
\lVert \left[ \textup{C}^n(\Phi_X)(\Vol_n) \right] \rVert_{\infty} \leq \lVert \Vol_n \rVert_\infty \ .
$$
Note that, by the definition of $\lVert \cdot \rVert_\infty$, we have
$$
\lVert \left[ \textup{C}^n(\Phi_X)(\Vol_n) \right] \rVert_{\infty} \leq \lVert \textup{C}^n(\Phi_X)(\Vol_n) \rVert_\infty  \ .
$$
Since we have already proved in Lemma~\ref{lemma:composizione:norm:non:increasing} that $\textup{C}^n(\Phi_X)$ is norm non-increasing , we get
$$
\lVert \textup{C}^n(\Phi_X)(\Vol_n) \rVert_\infty \leq \lVert \Vol_n \rVert_\infty \ ,
$$
whence the thesis.
\end{proof}

Having introduced a Milnor-Wood type inequality, we are interested now in investigating \emph{maximal cocycles}, that are cocycles of maximal volume. By Corollary~\ref{cor:max:vol} and Remark~\ref{oss:commento:estensione:mostow}, we already know that all the cocycles which are cohomologous to the one associated to the standard lattice embedding via a measurable function with essentially constant sign are maximal. We spend the rest of this section in proving that in fact they are the only ones. This remarkable result provides our desired rigidity result for measurable cocycles.

To that end, we need to prove the formula reported in Proposition \ref{prop:formula:volume} which allows us to express the volume as a multiplicative constant. Our result will be a generalization of Bucher, Burger and Iozzi's formula~\cite[Theorem 1.2]{bucher2:articolo}. Additionally, in the uniform case, our approach can be interpreted as an alternative proof of the generalized version Bader, Furman and Sauer's formula~\cite[Theorem~4.8]{sauer:articolo} (see Section~\ref{subsec:nuova:uniforme}).

\begin{proof}[Proof of Proposition \ref{prop:formula:volume}]
As already proved in Proposition~\ref{prop:vol:estimate:wood}, we know that
$$\trans_\Gamma^n \left[ \textup{C}^n(\Phi_X) (\Vol_n) \right] = \frac{\Vol(\sigma)}{\Vol(M)} \omega_n^b \ ,$$
where $\omega_n^b = \left[ \Vol_n\right] \in \, \textup{H}^n_{cb}(G; \mathbb{R}_\varepsilon)$.
A priori this equality holds at the level of cohomology classes but, as already explained in Section~\ref{sec:cbc}, we may identify $\textup{H}^n_{cb}(G; \mathbb{R}_\varepsilon)$ with the space of bounded measurable cocycles on $\mathbb{S}^{n - 1}$. Hence, the above equality can be restated in terms of cocycles as follows:
$$
\trans_\Gamma^n \circ\textup{C}^n(\Phi_X) (\Vol_n) = \frac{\Vol(\sigma)}{\Vol(M)} \Vol_n \ .
$$
It is immediate to check that this provides precisely the desired formula.
\end{proof}

Before going into the details of our rigidity Theorem~\ref{mostow}, we  define now a family of cocycles with vanishing volume.

\begin{deft}\label{def:lementary:cocycle}
Let $n \geq 3$. Let $\Gamma < G^+ =\Isom^+(\mathbb{H}^n)$  be a torsion-free non-uniform lattice and let $(X, \mu_X)$ be a standard Borel probability $\Gamma$-space. Let $\sigma \colon \Gamma \times X \rightarrow G^+$ be a measurable cocycle. Let $k < n$, we say that the cocycle $\sigma$ is \emph{reducible} if it is cohomologous to a cocycle $\sigma_\textup{red}: \Gamma \times X \rightarrow \Isom^+(\hyp^k)$ through a measurable map $f:X \rightarrow \Isom(\hyp^n)$ with essentially constant sign. Here $\Isom^+(\hyp^k)$ is thought of as a subgroup of $\Isom^+(\hyp^n)$ via the upper left corner injection, that is
$$
\Isom^+(\hyp^k) \rightarrow \Isom^+(\hyp^n), \hspace{10pt} g \mapsto 
\left(
\begin{array}{cc}
g & 0\\
0 & \textup{Id}_{n-k}\\
\end{array}
\right) \ ,
$$
where $\textup{Id}_{n-k}$ is the identity matrix of order $(n-k)$. 
\end{deft}

In the following example we show that reducible cocycles have volume equal to zero.

\begin{es}\label{es:reducible:cocycles}
Let $\sigma$ be a reducible cocycle. By Proposition~\ref{gplus:cohomology}, without loss of generality we can work directly with a cocycle
$$
\sigma:\Gamma \times X \rightarrow \Isom^+(\hyp^k)<\Isom^+(\hyp^n) \ .
$$
If we now assume that $\sigma$ admits a measurable map $\phi:\bbS^{n-1} \times X \rightarrow \bbS^{n-1}$, it should be clear that this map admits actually as target the $(k-1)$-dimensional sphere $\bbS^{k-1}$ stabilized by $\Isom^+(\hyp^k)$. Hence for almost every $x \in X$ we have a map $\phi_x:\bbS^{n-1} \rightarrow \bbS^{k-1} \ , \phi_x(\xi):=\phi(\xi,x)$ whose image lies entirely in $\bbS^{k-1}$. Moreover, since $X$ is a standard Borel space, by~\cite[Lemma 2.6]{fisher:morris:whyte} the map $\phi_x$ is measurable for almost every $x \in X$. 

Since the $\phi_x$-image of any tetrahedron lies in $\bbS^{k-1}$, we have

$$\int_{\Gamma \backslash G} \int_X \varepsilon(\overline{g}^{-1}) \cdot \Vol_n(\phi_x( \overline{g} \cdot \xi_0),\cdots,\phi_x(\overline{g} \cdot \xi_n))d\mu_X(x) d\mu (\overline{g})= 0 \ .
$$
By applying Proposition \ref{prop:formula:volume}, this shows that $\Vol(\sigma) = 0$.
\end{es}

We are now ready to discuss the proof of our main rigidity Theorem~\ref{mostow}.

\begin{proof}[Proof of Theorem~\ref{mostow}]
Let $\sigma_i$ denote the cocycle associated to the standard lattice embedding. We have already proved in Corollary~\ref{cor:max:vol} and Remark~\ref{oss:commento:estensione:mostow} that cocycles cohomologous to $\sigma_i$ via a measurable function with essentially constant sign have maximal volume. It remains to prove the converse. 

Let $\sigma \colon \Gamma \times X \rightarrow G^+$ be a maximal measurable cocycle with associated  essentially unique boundary map $\phi: \bbS^{n - 1} \times X \rightarrow \bbS^{n -1 }$. Up to conjugacy by a suitable element $g \in G$, we may assume that the volume of $\sigma$ is positive (see Proposition~\ref{gplus:cohomology}).

Fix a positive regular ideal tetrahedron with vertices $\xi_0, \cdots, \xi_n \in \, \mathbb{S}^{n-1}$ and denote its volume by $\nu_n$.

Define the measurable map $\phi_x \colon \bbS^{n-1} \rightarrow \bbS^{n-1}$ by $\phi_x(\xi):=\phi(\xi, x)$. Notice that the hypothesis on $X$ to be a standard Borel space implies that the map $\phi_x$ is measurable for almost every $x \in X$, again by~\cite[Lemma 2.6]{fisher:morris:whyte}. 

Following~\cite[Proposition 5]{bucher2:articolo}, we know that the equality stated in Proposition~\ref{prop:formula:volume} actually holds for every $\xi_0, \cdots, \xi_n \in \, \mathbb{S}^{n-1}$. Hence, we get 
$$\int_{\Gamma \backslash G} \int_X \varepsilon(\overline{g}^{-1}) \cdot \Vol_n(\phi_x( \overline{g} \cdot \xi_0),\cdots,\phi_x(\overline{g} \cdot \xi_n))d\mu_X(x) d\mu (\overline{g})= \nu_n \ , \nonumber 
$$
where $\mu$ is the normalized probability measure on the quotient $\Gamma \backslash G$. Since the argument of the integral is bounded from above by $\nu_n$, the previous formula implies that  
$$
\varepsilon(\overline{g}^{-1}) \cdot \Vol_n(\phi_x( \overline{g} \cdot \xi_0),\cdots,\phi_x(\overline{g} \cdot \xi_n)) = \nu_n
$$
for almost every $\overline{g} \in\Gamma \backslash G$ and almost every $x \in X$. Since $\phi$ is $\sigma$-equivariant,  the previous formula holds in fact for almost every $g \in G$ and for almost every $x \in X$. Following~\cite[Proposition 6]{bucher2:articolo}, this implies that $\phi_x$ is almost everywhere equal to an orientation-preserving isometry $f(x) \in \, G^+$. We now conclude the proof as describe by Bader, Furman and Sauer~\cite[Proposition~3.2]{sauer:articolo}. For the convenience of the reader, we recall here the procedure for obtaining the desired conjugation. The previous construction allows us to define a map $f \colon X \rightarrow G^+$. Note that the map $\hat \phi: X \rightarrow \textup{Meas}(\bbS^{n-1},\bbS^{n-1}), \hspace{5pt} \hat \phi(x):=\phi_x$ is measurable and its image lies entirely in the isometry group $\po^\circ(n,1) \subset \textup{Meas}(\bbS^{n-1},\bbS^{n-1})$ (see also~\cite[Proposition 3.2]{sauer:articolo}). Since by assumption $X$ is a standard Borel space, the measurability of the map $f$ follows now by~\cite[Lemma 2.6]{fisher:morris:whyte}.

Therefore,  given $\gamma \in \, \Gamma$, on the one hand we have
\[
\phi(i(\gamma) \xi, \gamma x)=\sigma(\gamma,x)\phi(\xi,x)=\sigma(\gamma,x)f(x)(\xi),
\]
and on the other hand
\[
\phi(i(\gamma) \xi, \gamma x)=f(\gamma x)(i(\gamma)\xi).
\]
The previous computations show that  
\[
i(\gamma)=f(\gamma x)^{-1} \sigma(\gamma,x) f(x) \ ,
\]
whence $\sigma$ is cohomologous to the cocycle associated to the standard lattice embedding $i:\Gamma \rightarrow G^+$.

\end{proof}

We already know that among maximal cocycles we may find the ones associated to maximal representations. Since we already mentioned that all our results also hold in the uniform case, it is remarkable that in this situation we can describe other families of maximal cocycles. We anticipate here the family arising from ergodic \emph{integrable} self-couplings and the ones coming from ergodic couplings of $\Isom(\mathbb{H}^n)$. We postpone to Section~\ref{sec:max:cocycle:mapping:degree} the discussion of the ones arising as pullback of maximal cocycles along maps homotopic to local isometries (see Proposition~\ref{prop:local:iso}). 

We recall first the definition of \emph{integrable} self-coupling (see for instance~\cite{shalom:annals, sauer:companion, sauer:articolo} for a discussion on this property).

Let $\Gamma$ be torsion-free uniform lattice in $G^+$ and let $\ell \colon \Gamma \rightarrow \mathbb{N}$ be the length function associated to some word-metric on $\Gamma$.  Keeping the notation of Section~\ref{sec:representations:coupling:volume}, consider a $(\Gamma_r, \Gamma_\ell)$-coupling $(\Omega, m_\Omega)$ with associated right measure equivalence cocycle $\alpha_\Omega \colon \Gamma_r \times \Gamma_\ell \backslash \Omega \rightarrow \Gamma_\ell$. We say that $(\Omega, m_\Omega)$ is an \emph{integrable} $(\Gamma_r, \Gamma_\ell)$-\emph{coupling} if for every $\gamma \in \, \Gamma$
$$
\int_{\Gamma_\ell \backslash \Omega} \ell(\alpha_\Omega(\gamma, x)) dm_{\Gamma_\ell \backslash \Omega}(x) < +\infty \ ,
$$
where $\Gamma_\ell \backslash \Omega$ is the normalized probability measure on $\Gamma_r \backslash \Omega$. We need this definition in order to apply a resuly by Bader, Furman and Sauer~\cite[Corollary~4.12]{sauer:articolo}.

As describe by Shalom~\cite[Theorem~3.6]{shalom:annals} (see also~\cite[Remark~5.5]{sauer:companion}), if $n \geq 3$, the group $\Isom(\mathbb{H}^n)$ endowed with its Haar measure $m_{\mathcal{H}}$ is an integrable $(\Gamma_r, \Gamma_\ell)$-coupling for any of its uniform lattices.

We are now ready to show that ergodic integrable self-couplings are maximal (compare with~\cite[Corollary~4.12]{sauer:articolo}).

\begin{cor}\label{cor:value:coupling}
Assume $n \geq 3$. Let $\Gamma < \textup{Isom}^+(\mathbb{H}^n)$ be a torsion-free uniform lattice and set $M=\Gamma \backslash \mathbb{H}^n$. Let $(\Omega,m_\Omega)$ be an ergodic integrable $(\Gamma_r,\Gamma_\ell)$-coupling with associated right measure equivalence cocycle $\alpha_\Omega:\Gamma_r \times \Gamma_\ell \backslash \Omega \rightarrow \Gamma_\ell$. Then it holds 
$$
|\Vol(\alpha_\Omega)| = \Vol(M) \ . 
$$
\end{cor}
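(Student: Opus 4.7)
The plan is to reduce the statement to the corresponding maximality result for Bader--Furman--Sauer's Euler number of self-couplings, namely \cite[Corollary~4.12]{sauer:articolo}, via the identification of invariants set up in Section~\ref{subsec:nuova:uniforme}.

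First, I would verify that $\Vol(\alpha_\Omega)$ is well-defined in the sense of Definition~\ref{def:volume:uniforme}. The right measure equivalence cocycle $\alpha_\Omega \colon \Gamma_r \times \Gamma_\ell \backslash \Omega \to \Gamma_\ell$ takes values in the uniform lattice $\Gamma_\ell < G^+$, and the existence of an essentially unique boundary map $\phi \colon \mathbb{S}^{n-1} \times \Gamma_\ell \backslash \Omega \to \mathbb{S}^{n-1}$ follows from the general theory of Furstenberg boundaries, exactly as recalled before Diagram~(\ref{diagr:numero:euler:bfs}). With this in place, the chain of equalities unrolled at the end of Section~\ref{subsec:nuova:uniforme} gives
$$
\Vol(\alpha_\Omega) \;=\; \eu(\Omega),
$$
where $\eu(\Omega)$ is the (generalized) Euler number of the self-coupling $\Omega$ in the sense of \cite{sauer:articolo}.

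Second, I would invoke \cite[Corollary~4.12]{sauer:articolo}, which asserts that for a torsion-free uniform lattice $\Gamma$ in $G^+ = \Isom^+(\mathbb{H}^n)$ with $n \geq 3$, any ergodic \emph{integrable} self-coupling $(\Omega, m_\Omega)$ is maximal: $|\eu(\Omega)| = \Vol(M)$. Combined with the identification above, this immediately yields $|\Vol(\alpha_\Omega)| = \Vol(M)$, as required.

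The only delicate point is bookkeeping: one needs the integrability and ergodicity assumptions of Bader--Furman--Sauer's statement to survive the translation to our setup. Since the identification $\Vol(\alpha_\Omega) = \eu(\Omega)$ is purely cohomological (it goes through Diagram~(\ref{diagr:numero:euler:bfs}) and does not by itself impose any integrability condition), these hypotheses transfer transparently; ergodicity ensures that the resulting invariant is a constant rather than an essentially measurable function of an ergodic decomposition parameter, while integrability is exactly what forces the Milnor--Wood bound of Proposition~\ref{prop:vol:estimate:wood} to be saturated. Thus the principal obstacle, if any, is not analytic but notational, and is already absorbed by the comparison carried out in Section~\ref{subsec:nuova:uniforme}.
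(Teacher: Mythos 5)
Your proposal is correct and follows essentially the same route as the paper: identify $\Vol(\alpha_\Omega)$ with the generalized Euler number $\eu(\Omega)$ via the comparison of Section~\ref{subsec:nuova:uniforme}, then invoke \cite[Corollary~4.12]{sauer:articolo} for ergodic integrable self-couplings to get $|\eu(\Omega)| = \Vol(M)$. The extra remarks on the existence of the boundary map and on how the hypotheses transfer are consistent with what the paper already establishes before Diagram~(\ref{diagr:numero:euler:bfs}).
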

\begin{proof}
Since we know by~\cite[Corollary~4.12]{sauer:articolo} that integrable self couplings satisfy $\eu(\Omega) = \pm \Vol(M)$, by Section~\ref{subsec:nuova:uniforme} we get 
$$
|\Vol(\alpha_\Omega)| = |\eu(\Omega)|=\Vol(M) \ ,
$$
and this concludes the proof.
\end{proof}

\begin{oss}
In the uniform case, the previous corollary together with Theorem~\ref{mostow} show that measurable cocycles associated to ergodic integrable self-couplings are conjugated with the ones associated to maximal representations. 
\end{oss}

\section{Maximal cocycles and mapping degree}\label{sec:max:cocycle:mapping:degree}

Let $M_1$ and $M_2$ be two closed hyperbolic manifold of the same dimension $n \geq 3$. We know by \cite[Theorem 6.4]{Thurston} (compare also with \cite[Corollary 1.3]{bucher2:articolo}), that given a continuous map $f \colon M_1 \rightarrow M_2$, we have 
$$
| \deg(f) | \leq \frac{\Vol(M_1)}{\Vol(M_2)}
$$
and the equality holds if and only if $f$ is homotopic to a local isometry.

We show here a result that characterizes maps homotopic to local isometries via the previous theorem using maximal cocycles (i.e. cocycles of maximal volume).

Let $f \colon M_1 \rightarrow M_2$ be a continuous map. Denote by $\Gamma_1$ and $\Gamma_2$ the fundamental groups of $M_1$ and $M_2$, respectively. Let $\pi_1(f) \colon \Gamma_1 \rightarrow \Gamma_2$ be the induced map by $f$ at the level of fundamental groups. Let us consider a measurable cocycle $\sigma \colon \Gamma_2 \times X \rightarrow G^+$, where $(X, \mu_X)$ is a standard Borel probability $\Gamma_2$-space. Note that $(X, \mu_X)$ can be also viewed as a standard Borel probability $\Gamma_1$-space via the action induced by $\pi_1(f)$. We define the \emph{pullback cocycle along $f$} as
$$
f^*\sigma \colon \Gamma_1 \times X \rightarrow G^+ , \hspace{10pt} f^*\sigma(\gamma, x) = \sigma(\pi_1(f)(\gamma), x) \ ,
$$
for all $\gamma \in \, \Gamma_1$ and $x \in \, X$.
\begin{lem}
The map $f^*\sigma$ is a measurable cocycle.
\end{lem}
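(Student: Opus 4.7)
The plan is to verify in turn the three requirements from Definition \ref{cocycle}: the measurability of $f^*\sigma$, the continuity of the associated orbit map $\Gamma_1 \to \textup{Meas}(X, G^+)$, and the cocycle identity (\ref{cocycleq}). Since $\pi_1(f) \colon \Gamma_1 \rightarrow \Gamma_2$ is a group homomorphism between countable discrete groups, it is automatically Borel, and so the map $\pi_1(f) \times \textup{id}_X \colon \Gamma_1 \times X \rightarrow \Gamma_2 \times X$ is measurable. Then $f^*\sigma = \sigma \circ (\pi_1(f) \times \textup{id}_X)$ is measurable as a composition of measurable maps.

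For the continuity requirement, I would observe that $\Gamma_1$ is discrete (being the fundamental group of a closed hyperbolic manifold, viewed as a lattice in $G^+$), so every map from $\Gamma_1$ to any topological space is continuous. In particular, $\gamma \mapsto f^*\sigma(\gamma, \cdot)$ is trivially continuous into $\textup{Meas}(X, G^+)$.

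The verification of the cocycle identity is the only step with actual content, and it reduces to the observation that the $\Gamma_1$-action on $X$ is, by definition, the one obtained by restricting the $\Gamma_2$-action along $\pi_1(f)$; thus for $\gamma \in \Gamma_1$ and (almost every) $x \in X$, we have $\gamma \cdot x = \pi_1(f)(\gamma) \cdot x$. Combining this with the fact that $\pi_1(f)$ is a homomorphism and with the cocycle identity for $\sigma$, one computes
\[
f^*\sigma(\gamma_1 \gamma_2, x) = \sigma(\pi_1(f)(\gamma_1)\pi_1(f)(\gamma_2), x) = \sigma(\pi_1(f)(\gamma_1), \gamma_2 \cdot x)\, \sigma(\pi_1(f)(\gamma_2), x),
\]
which is exactly $f^*\sigma(\gamma_1, \gamma_2 \cdot x)\, f^*\sigma(\gamma_2, x)$ for all $\gamma_1, \gamma_2 \in \Gamma_1$ and almost every $x \in X$.

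There is no real obstacle in the argument; the only mildly delicate point is the implicit assumption that $(X, \mu_X)$, seen as a $\Gamma_1$-space via $\pi_1(f)$, still fits the standing convention from Definition \ref{cocycle} of being a standard Borel probability $\Gamma_1$-space. The action is measure-preserving because $\Gamma_2$ acts so, and if essential freeness is not automatic (e.g.\ when $\pi_1(f)$ fails to be injective) one can invoke the remark after Definition \ref{cocycle} and replace $X$ by its product with an essentially free $\Gamma_1$-space equipped with the diagonal action, which does not affect any of the constructions performed later on $f^*\sigma$.
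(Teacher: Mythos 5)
Your proof is correct and follows essentially the same route as the paper: the core of the argument is the same computation verifying the cocycle identity via the homomorphism property of $\pi_1(f)$ and the cocycle identity for $\sigma$, while the measurability and continuity checks (which the paper explicitly leaves to the reader as straightforward) are filled in by you with the expected observations about composition with a Borel map and the discreteness of $\Gamma_1$. Your closing remark about essential freeness of the induced $\Gamma_1$-action is a reasonable extra precaution, consistent with the remark following Definition~\ref{cocycle}.
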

\begin{proof}
Let $\gamma_1, \gamma_2 \in \, \Gamma_1$ and $x \in \, X$. Then, we have
\begin{align*}
f^*\sigma(\gamma_1 \gamma_2, x) &= \sigma(\pi_1(f)(\gamma_1 \gamma_2), x) \\
&=\sigma(\pi_1(f)(\gamma_1) \pi_1(f)(\gamma_2), x) \\
&= \sigma(\pi1(f)(\gamma_1), \pi_1(f)(\gamma_2) \cdot x) \sigma(\pi_1(f)(\gamma_2), x) \\
&= f^*\sigma(\gamma_1, \pi_1(f)(\gamma_2) \cdot x) f^*\sigma(\gamma_2, x) \ .
\end{align*}
The proof of the measurability and of the continuity of the map
$$
\Gamma_1 \rightarrow \textup{Meas}(X,G^+), \hspace{5pt} \gamma \mapsto \sigma(\pi_1(f)(\gamma),\cdot)
$$
are straightforward and we leave their proof to the reader.
\end{proof}

Given a continuous map $f:M_1 \rightarrow M_2$ of non-vanishing degree, thanks to the work of \cite{burger:mozes,franc09} there exists an essentially unique measurable map $\widetilde{f} \colon \partial \mathbb{H}^n \rightarrow \partial \mathbb{H}^n$ which is $\pi_1(f)$-equivariant. Now if we assume that $\sigma$ admits a boundary map $\phi \colon \mathbb{S}^{n-1} \times X \rightarrow \mathbb{S}^{n-1}$, the pullback cocycle along $f$ admits the following boundary map:
$$
f^*\phi \colon \mathbb{S}^{n-1} \times X \rightarrow \mathbb{S}^{n-1} , \hspace{10pt} f^*\phi(\xi,x) \coloneqq \phi(\widetilde{f}(\xi), x)  \ ,
$$
for all $\xi \in \, \mathbb{S}^{n-1}$ and $x \in \, X$.

Having introduced all the elements appearing in Proposition~\ref{prop:moltiplicativita}, we are now ready to prove it.

\begin{proof}[Proof of Proposition \ref{prop:moltiplicativita}]
Since $\sigma$ is maximal, by the proof of Theorem \ref{mostow} we know that the slices of the associated boundary map $\phi:\mathbb{S}^{n-1} \times X \rightarrow \mathbb{S}^{n-1}$ are isometries. More precisely $\phi_x:\mathbb{S}^{n-1} \rightarrow \mathbb{S}^{n-1}, \hspace{5pt} \phi_x(\xi):=\phi(\xi,x)$ is an isometry for almost every $x \in X$. Hence there exists a measurable map $F:X \rightarrow G^+$ such that 
$$
\phi_x(\xi)=F(x)(\xi) \ ,
$$
for almost every $\xi \in \mathbb{S}^{n-1}$. If we now consider the volume cocycle associated to $\sigma$, for almost every $\xi_0,\ldots,\xi_n \in \mathbb{S}^{n-1}$ we have that 
\begin{align*}
\textup{C}^n(\Phi_X)(\textup{Vol}_n)(\xi_0,\ldots,\xi_n)&=\int_X \textup{Vol}_n(\phi_x(\xi_0),\ldots,\phi_x(\xi_n))d\mu_X(x)=\\
&=\int_X \textup{Vol}_n(F(x)(\xi_0),\ldots,F(x)(\xi_n))d\mu_X(x)=\\
&=\int_X \textup{Vol}_n(\xi_0,\ldots,\xi_n)d\mu_X(x)=\textup{Vol}_n(\xi_0,\ldots,\xi_n) \ ,
\end{align*}
or equivalently it holds
\begin{equation}\label{equation:max:vol}
\textup{C}^n(\Phi_X)(\textup{Vol}_n)=\textup{Vol}_n \ .
\end{equation}
We have the following chain of equalities:

\begin{align}\label{equation:degree:estimate}
\textup{Vol}(f^\ast \sigma)&=\langle \textup{comp}^n \circ g^n_{M_1} [\textup{C}^n(f^\ast \Phi_X)(\textup{Vol}_n)], [M_1] \rangle= \\
&=\langle \textup{comp}^n \circ g^n_{M_1} [\textup{C}^n(\widetilde{f}) \circ \textup{C}^n(\Phi_X)(\textup{Vol}_n)], [M_1] \rangle= \nonumber\\
&=\langle \textup{comp}^n \circ g^n_{M_1} [\textup{C}^n(\widetilde{f})(\textup{Vol}_n)], [M_1] \rangle= \nonumber\\
&=\langle \textup{comp}^n \circ g^n_{M_1} \circ \textup{H}^n_{b}(\pi_1(f))[\textup{Vol}_n], [M_1] \rangle= \nonumber\\
&=\langle \textup{H}^n(f) \circ \textup{comp}^n \circ g^n_{M_1}[\textup{Vol}_n],[M_1] \rangle= \nonumber\\
&=\langle \textup{comp}^n \circ g^n_{M_1}[\textup{Vol}_n],\textup{H}_n(f)[M_1] \rangle= \nonumber\\
&=\langle \textup{comp}^n \circ g^n_{M_1}[\textup{Vol}_n],\textup{deg}(f) \cdot [M_2] \rangle=\textup{deg}(f)\textup{Vol}(M_2) \nonumber \ ,
\end{align}
where we used \cite{burger:articolo} to implement the class $\textup{H}^n_{b}(\pi_1(f))[\textup{Vol}_n]$ using the preferred representative $\textup{C}^n(\widetilde{f})(\textup{Vol}_n)$ to pass from the third to the fourth line. Additionally we exploited the fact that comparison maps commute with the maps induced by representations to move from the fourth line to the fifth. This concludes the proof.
\end{proof}

The previous proposition easily implies the mapping degree theorem, as shown in the following:

\begin{cor}\label{cor:mapping:degree} 
Let $f \colon M_1 \rightarrow M_2$ be a continuous map with $\deg(f) \neq 0$ between closed hyperbolic manifolds of the same dimension $n \geq 3$. Then,
$$
|\deg(f)| \leq \frac{\Vol(M_1)}{\Vol(M_2)}
$$
\end{cor}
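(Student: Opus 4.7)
The plan is to deduce the mapping degree inequality by combining Proposition~\ref{prop:moltiplicativita} with the Milnor-Wood type inequality of Proposition~\ref{prop:vol:estimate:wood} (applied in the uniform setting, as per Remark~\ref{oss:everything:uniform:case}). Write $\Gamma_i = \pi_1(M_i)$ for $i = 1,2$, both torsion-free uniform lattices in $G^+ = \po^\circ(n,1)$.

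First, I would choose a concrete maximal cocycle associated to $\Gamma_2$. The natural candidate is the cocycle $\sigma_{i_2} \colon \Gamma_2 \times X \rightarrow G^+$ associated to the standard lattice embedding $i_2 \colon \Gamma_2 \rightarrow G^+$, where $(X,\mu_X)$ is any standard Borel probability $\Gamma_2$-space. By Proposition~\ref{prop:rep:vol} (in its uniform version, Remark~\ref{oss:everything:uniform:case}) we have $\Vol(\sigma_{i_2}) = \Vol(i_2) = \Vol(M_2)$, so $\sigma_{i_2}$ is maximal. Note also that $\sigma_{i_2}$ inherits an essentially unique boundary map from the identity boundary map of $i_2$, so it lies in the scope of Proposition~\ref{prop:moltiplicativita}.

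Next, I would apply Proposition~\ref{prop:moltiplicativita} to $f$ and $\sigma_{i_2}$, viewing $(X,\mu_X)$ as a $\Gamma_1$-space via $\pi_1(f)$. This yields
\[
\Vol(f^* \sigma_{i_2}) = \deg(f)\cdot \Vol(M_2).
\]
Since the pullback $f^*\sigma_{i_2}$ admits the essentially unique boundary map $f^*\phi(\xi,x) = \phi(\widetilde{f}(\xi),x)$ constructed just before Proposition~\ref{prop:moltiplicativita}, the Milnor-Wood type inequality of Proposition~\ref{prop:vol:estimate:wood}, in its uniform reformulation, gives
\[
|\Vol(f^* \sigma_{i_2})| \leq \Vol(M_1).
\]
Combining these two relations yields $|\deg(f)|\cdot \Vol(M_2) \leq \Vol(M_1)$, which is exactly the desired estimate.

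The content of the argument is really shifted entirely into Proposition~\ref{prop:moltiplicativita} and Proposition~\ref{prop:vol:estimate:wood}; once these are in hand, the only potential subtlety is to ensure the ingredients work for uniform lattices (so that $M_1,M_2$ are closed) and that the pullback cocycle indeed has a boundary map, both of which are already addressed in the preceding text. No additional obstacle is expected.
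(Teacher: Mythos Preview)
Your proposal is correct and follows essentially the same approach as the paper: apply Proposition~\ref{prop:moltiplicativita} to a maximal cocycle to get $\Vol(f^*\sigma)=\deg(f)\cdot\Vol(M_2)$, then bound $|\Vol(f^*\sigma)|$ by $\Vol(M_1)$ via Proposition~\ref{prop:vol:estimate:wood}. The only difference is that you make the choice of maximal cocycle explicit (the one associated to the standard lattice embedding $i_2$) and spell out the uniform-lattice and boundary-map prerequisites, which the paper leaves implicit.
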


\begin{proof}
Let $n \geq 3$. By Proposition~\ref{prop:moltiplicativita}, on the one hand we know that given a maximal cocycle $\sigma \colon \Gamma_2 \times X \rightarrow \po^\circ(n, 1)$, we get
$$
\Vol(f^*\sigma) = \deg(f) \cdot \Vol(M_2) \ .
$$
On the other, by Proposition~\ref{prop:vol:estimate:wood}  we have that
$$
\Vol(M_2) \cdot |\deg(f)| = |\Vol(f^*\sigma)| \leq \Vol(M_1) \ ,
$$
whence the thesis.
\end{proof}

Another remarkable application of Proposition \ref{prop:moltiplicativita} is the possibility to use the language of maximal cocycles in order to characterize continuous maps between closed hyperbolic manifolds that are homotopic to local isometries. This is the content of Proposition \ref{prop:local:iso}, whose proof is reported below. 

\begin{proof}[Proof of Proposition \ref{prop:local:iso}]
Suppose that $f$ is homotopic to a local isometry. On the one hand, Thurston's strict version of the mapping degree theorem~\cite[Theorem~6.4]{Thurston} implies that 
$$
\Vol(M_1) = |\textup{deg}(f)| \cdot \Vol(M_2) \ .
$$
On the other hand, Proposition~\ref{prop:moltiplicativita} implies
$$
|\Vol(f^*\sigma)| = |\deg(f)| \cdot \Vol(M_2) \ .
$$
This shows that $f^*\sigma$ is maximal.

Suppose now the converse, that is $f^*\sigma$ is maximal. Then, Proposition~\ref{prop:moltiplicativita} implies that
$$
\Vol(M_1) = |\Vol(f^*\sigma)| = |\deg(f)| \cdot \Vol(M_2) \ .
$$
The thesis now follows as a consequence of  \cite[Theorem 6.4]{Thurston}.
\end{proof}

\section{Euler number of Zimmer's cocycles of surface groups}\label{sec:Euler:totale}

Let $\Sigma_g$ a closed hyperbolic surface of genus $g \geq 2$ and denote by $\Gamma_g:=\pi_1(\Sigma_g)$ its fundamental group. Fix a standard Borel probability $\Gamma_g$-space $(X,\mu_X)$. In this section we want to define the Euler number associated to a measurable cocycle~$\sigma: \Gamma_g \times X \rightarrow \homs$ and study its rigidity property. This provides an extension of the study of maximal representations to the settings of measurable cocycles. 

Since $\Gamma_g$ is a uniform lattice, we have already discussed in Section~\ref{subsec:nuova:uniforme} how our approach leads to the generalized version of Bader-Furman-Sauer's Euler number. Here, we show in Remark~\ref{oss:nuova:uniforme} that our Euler number of measurable cocycles differs from the latter by a multiplicative constant. However, as mentioned above this section is mainly devoted to the extension of the Euler number of representations. Remarkably, the techniques that we develop here allows us to provide alternative proofs of the \emph{generalized} version of some results by Bader, Furman and Sauer~\cite{sauer:articolo} for measurable cocycles.

Assume from now on that  $\sigma$ admits an essentially unique boundary map $\phi: \bbS^1 \times X \rightarrow \bbS^1$. It is important to underline that a priori there is no well-defined action of $\Gamma_g$ on the circle $\bbS^1$, hence we first need to fix a hyperbolization $\pi_0:\Gamma_g \rightarrow \textup{PSL}(2,\R)$ to specify this action. We will assume that $\homs$ has the discrete topology. Note that $\bbS^1$ is a compact completely metrizable space on which $\homs$ acts in a measurable way. This means that for all along the section we consider generalized boundary maps as described in Remark \ref{oss:boundary:genmap}.

\subsection{Definition of Euler number}\label{sec:def:eulero:main:prop}

Recall by Section~\ref{sec:euler:class} that the Euler cocycle $$\epsilon=-\orien/2 \in \mathcal{B}^\infty((\bbS^1)^3;\R)^{\homs}$$ naturally determine a cohomology class in $$[\textup{C}^2(\Phi_X)(\epsilon)] \in \uph_b^2(\Gamma_g;\R) \ ,$$ and we call it \emph{the Euler class associated to $\sigma$.} Let $[\Sigma_g] \in \uph_2(\Sigma_g,\R)$ be the fundamental class of $\Sigma_g$. Similarly to what we have done in the previous section, we denote by
\[
\comp^2:\uph_b^2(\Sigma_g;\R) \rightarrow \uph^2(\Sigma_g;\R)
\]
the comparison map (which is non-trivial since $\Sigma_g$ is compact). We are now ready to define the Euler number associated to a cocycle.

\begin{deft}\label{euler} 
Let $\Sigma_g$ be a closed surface of genus $g \geq 2$ and let $\Gamma_g=\pi_1(\Sigma_g)$. Let $(X,\mu_X)$ be a standard Borel probability $\Gamma_g$-space. Fix a hyperbolization $\pi_0:\Gamma_g \rightarrow \textup{PSL}(2,\R)$ and assume that $\Gamma_g$ acts on $\bbS^1$ via $\pi_0$. Consider a cocycle $\sigma:\Gamma_g \times X \rightarrow \homs$ with essentially unique boundary map $\phi:\bbS^1 \times X \rightarrow \bbS^1$. The \textit{Euler number} $\eu(\sigma)$ associated to the cocycle $\sigma$ is given by
\[
\eu(\sigma):=\langle \comp^2 \circ g_{\Sigma_g}^2 [\textup{C}^2(\Phi_X)(\epsilon)], [\Sigma_g] \rangle \ ,
\]
where $\langle \cdot,\cdot \rangle$ is the Kronecker pairing and $\comp^2, g_{\Sigma_g}^2$  denote the comparison map and the isomorphism appearing in Gromov's mapping theorem, respectively. 
\end{deft}

\begin{oss}
As already noticed in Remark \ref{oss:existence:boundary:map}, also in this case it is sufficient to suppose that $\sigma$ is \emph{non-elementary} to obtain the existence of an essentially unique boundary map by \cite[Proposition 3.3]{MonShal0}.  
\end{oss}

\begin{oss}
As discussed in Remark~\ref{oss:uniforme:diagramma}, recall that the comparison map also appears in our definition of volume of cocycles defined in terms of \emph{uniform} lattices. For the same reason, we are forced to introduce the comparison map also when we define the Euler number of measurable cocycles.
\end{oss}

\begin{oss}\label{oss:nuova:uniforme}
We have already discussed in Section~\ref{subsec:nuova:uniforme} how the results proved by Bader, Furman and Sauer~\cite{sauer:articolo} about $(\Gamma, \Gamma)$-couplings can be  extended to their generalized versions (i.e. rephrased for arbitrarily measurable cocycles). Here, we show how our Euler number of measurable cocycle differs by a multiplicative constant from the \emph{generalized} version of Bader-Furman-Sauer's Euler number. Via this remark, we will see in the sequel how some of our results provide alternative proofs of the generalized version of some results by Bader, Furman and Sauer~\cite{sauer:articolo}.

Let $\Sigma_g$ be a closed surface of genus $g \geq 2$ and let $\Gamma_g=\pi_1(\Sigma_g)$. Fix a hyperbolization $\pi_0:\Gamma_g \rightarrow \textup{PSL}(2,\R)$ and suppose that $\Gamma_g$ acts on $\bbS^1$ via $\pi_0$. Let $(\Omega,m_\Omega)$ be a $(\Gamma_g,\Gamma_g)$-coupling with associated right measure equivalence cocycle $\alpha_\Omega:\Gamma_g \times \Gamma_g \backslash \Omega \rightarrow \Gamma_g$. Then it holds
$$
\eu(\alpha_\Omega)=-\frac{\eu(\Omega)}{2 \pi} \ .
$$
Indeed, Bader, Furman and Sauer~\cite{sauer:articolo} consider the pullback of the Volume cocycle rather that of the Euler one. These cocycles are related by the following equation
$$
-2\pi \epsilon = \Vol_2 \ .
$$
Moreover, by standard result about Furstenberg boundaries theory, there exists a boundary map $\phi: \bbS^1 \times \Gamma_g \backslash \Omega \rightarrow \bbS^1$ associated to $\alpha_\Omega$. As showed in Section~\ref{subsec:nuova:uniforme}, by a suitable modification of the commutative Diagram (\ref{diagr:numero:euler:bfs}) to this context, we get

\begin{align*}
\eu(\Omega) &= \langle \comp^2 \circ g_{\Sigma_g}^2 \circ \textup{I}^2_{\Gamma_g \backslash \Omega} \circ \textup{H}^2_b(\Omega) \circ \textup{H}^2_b(\kappa) \circ \textup{H}^2_b(i)([\Vol_2]) , [\Sigma_g] \rangle \\
&=\langle \comp^2 \circ g_{\Sigma_g}^2 \left[\textup{C}^2(\Phi_{\Gamma_g \backslash \Omega})(\Vol_2)\right], [\Sigma_g] \rangle \\
&=\langle \comp^2 \circ g_{\Sigma_g}^2 \left[\textup{C}^2(\Phi_{\Gamma_g \backslash \Omega})(-2\pi \epsilon) \right], [\Sigma_g] \rangle=-2 \pi \eu(\alpha_\Omega) \ .
\end{align*}
\end{oss}

\subsection{Euler number of cocycles vs. Euler number of representations}\label{sec:eul:coc:vs:repre}

Recall by Definition~\ref{cocyclerep} that, given any standard Borel probability $\Gamma_g$-space $(X,\mu_X)$, every representation $\rho:\Gamma_g \rightarrow \homs$ induces naturally a cocycle $\sigma_\rho:\Gamma_g \times X \rightarrow \homs$. 

We are going to prove, under an additional hypothesis on $\rho$, that the Euler number associated to $\sigma_\rho$ coincides with the classic Euler number associated to $\rho$. This shows that Definition~\ref{euler} extends the classic notion of Euler number of representations to the wider theory of Zimmer's cocycles.

\begin{prop}\label{eu:representations} 
Let $\Sigma_g$ be a closed surface of genus $g \geq 2$ and let $\Gamma_g=\pi_1(\Sigma_g)$. Let $(X,\mu_X)$ be a standard Borel probability $\Gamma_g$-space. Fix a hyperbolization $\pi_0:\Gamma_g \rightarrow \textup{PSL}(2,\R)$ and assume that $\Gamma_g$ acts on $\bbS^1$ via $\pi_0$. Consider a representation $\rho:\Gamma_g \rightarrow \homs$ with associated cocycle $\sigma_\rho$. Assume that $\rho$ admits an essentially unique measurable map $\varphi: \bbS^1 \rightarrow \bbS^1$ which is  equivariant with respect to the actions determined by $\pi_0$ and $\rho$, respectively. Then, we have
\[
\eu(\sigma_\rho)=\eu(\rho) \ .
\]
\end{prop}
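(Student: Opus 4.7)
The plan is to mimic the argument of Proposition~\ref{prop:rep:vol}, replacing the volume cocycle $\Vol_n$ by the Euler cocycle $\epsilon$ throughout. The hypothesis on the existence of an essentially unique equivariant map $\varphi \colon \bbS^1 \rightarrow \bbS^1$ for $\rho$ plays the role that doubly ergodic actions played in the volume setting, producing an essentially unique boundary map for $\sigma_\rho$ for free.

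First I would set $\phi(\xi,x) := \varphi(\xi)$ for $\xi \in \bbS^1$ and $x \in X$. Since $\sigma_\rho(\gamma,x) = \rho(\gamma)$ and $\varphi$ is equivariant with respect to the $\Gamma_g$-actions induced by $\pi_0$ on the source and by $\rho$ on the target, the map $\phi$ is $\sigma_\rho$-equivariant, hence a generalized boundary map for $\sigma_\rho$ in the sense of Remark~\ref{oss:boundary:genmap}. Because $\phi$ is independent of the variable $x$, the composition $\textup{I}_X^2 \circ \textup{C}^2(\phi)$ collapses to $\textup{C}^2(\varphi)$ alone: the integration over a probability space of a constant integrand returns the same constant. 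By Definition~\ref{def:pullback:map} this yields the cochain-level identity
$$
\textup{C}^2(\Phi_X)(\epsilon) = \textup{C}^2(\varphi)(\epsilon).
$$

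The second step is to invoke the result of Burger and Iozzi already used in the proof of Proposition~\ref{prop:rep:vol}: whenever an essentially unique equivariant boundary map $\varphi$ exists, the cocycle $\textup{C}^2(\varphi)(\epsilon)$ is a canonical bounded Borel representative of the pullback class $\hb^2(\rho)([\epsilon])$. Combining this with Equation~(\ref{orientation:eulero}) (which identifies $[\epsilon]$ with the bounded Euler class up to sign and normalization) and with Ghys' classical definition $\eu(\rho) = \langle \comp^2 \circ g^2_{\Sigma_g} \circ \hb^2(\rho)([\epsilon]), [\Sigma_g]\rangle$, the chain
$$
\eu(\sigma_\rho) = \langle \comp^2 \circ g^2_{\Sigma_g}[\textup{C}^2(\Phi_X)(\epsilon)], [\Sigma_g]\rangle = \langle \comp^2 \circ g^2_{\Sigma_g}[\textup{C}^2(\varphi)(\epsilon)], [\Sigma_g]\rangle = \eu(\rho)
$$
closes the argument.

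The only (mild) obstacle I foresee is checking that the preferred-representative principle of Burger-Iozzi is applicable here, since $\homs$ carries the discrete topology and $\bbS^1$ is only a compact completely metrizable $\homs$-space rather than a genuine Furstenberg-Poisson boundary of $\homs$ itself. However, the hyperbolization $\pi_0$ realizes $\bbS^1$ as the Furstenberg-Poisson boundary of $\Gamma_g$, and the complex of bounded Borel $\Gamma_g$-invariant cochains on $(\bbS^1)^{\bullet+1}$ still computes $\hb^\bullet(\Gamma_g;\R)$ isometrically by Theorem~\ref{teor:monod:resolution:L:inf}. The naturality of the pullback in bounded cohomology, combined with the canonical map of Equation~(\ref{eq:map:end:sec:bm}), then delivers the identification of $\textup{C}^2(\varphi)(\epsilon)$ with a representative of $\hb^2(\rho)([\epsilon])$ exactly as in the volume case, and the proof goes through without further modification.
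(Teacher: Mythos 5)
Your proposal is correct and follows essentially the same route as the paper's own proof: define the constant-in-$x$ boundary map $\phi(\xi,x)=\varphi(\xi)$, observe that $\textup{C}^2(\Phi_X)(\epsilon)=\textup{C}^2(\varphi)(\epsilon)$ since integration over the probability space $X$ is trivial, and invoke the Burger--Iozzi preferred-representative result to identify $\textup{C}^2(\varphi)(\epsilon)$ with a representative of $\textup{H}_b^2(\rho)(\textup{e}_b)$ before pairing with $[\Sigma_g]$. Your closing caveat about the applicability of the Burger--Iozzi principle for the discrete group $\homs$ acting on $\bbS^1$ is a reasonable extra check, but the paper handles this implicitly via the generalized boundary map framework of Remark~\ref{oss:boundary:genmap}, so no change of strategy is needed.
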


\begin{proof}
Thanks to the assumption about the existence of the essentially unique map $\varphi$, we can define a $\sigma_\rho$-equivariant boundary map $\phi: \bbS^1 \times X \rightarrow \bbS^1$ of $\sigma_\rho$ as follows: 
\[
\phi:\bbS^1 \times X \rightarrow \bbS^1, \hspace{10pt} \phi(\xi,x):=\varphi(\xi) \,
\]
for almost every $\xi \in \bbS^1$ and $x \in X$ (see Section~\ref{sec:zimmer:cocycle}). Since the boundary map $\phi$ actually does not depend on the second variable $x \in X$, it is immediate to check that the two following pullback maps agree $$\textup{C}^2(\Phi_X)(\epsilon)=\textup{C}^2(\varphi)(\epsilon) \ .$$

Recalling that the pullback $\uph_b^2(\rho)(\text{e}_b)$ of the Euler class along $\rho$ admits as a representative $\textup{C}^2(\varphi)(\epsilon)$ as described by Burger and Iozzi~\cite{burger:articolo}, we get that 

\begin{align*}
\eu(\sigma_\rho)&=\langle \comp^2 \circ g_{\Sigma_g}^2 \left[\textup{C}^2(\Phi_X)(\epsilon)\right], [\Sigma_g] \rangle=\\
	               &=\langle \comp^2 \circ g_{\Sigma_g}^2 \left[\textup{C}^2(\varphi)(\epsilon)\right],[\Sigma_g] \rangle=\\
		    &=\langle \comp^2 \circ g_{\Sigma_g}^2 \circ \uph_b^2(\rho)(\text{e}_b),[\Sigma_g] \rangle=\eu(\rho) \ ,
\end{align*}
and the statement now follows. 
\end{proof}

It is a standard fact that the Euler number is constant along the semiconjugacy class of a representation $\rho:\Gamma_g \rightarrow \homs$ (see for instance~\cite{iozzi02:articolo}). We show now that a similar result still holds in the more general theory of Zimmer's cocycles. More precisely, the following proposition proves that the Euler number is constant on the cohomology class of a cocycle $\sigma:\Gamma_g \times X \rightarrow \homs$. 

\begin{prop}\label{homs:cohomology}
Let $\Sigma_g$ be a closed surface of genus $g \geq 2$ and let $\Gamma_g=\pi_1(\Sigma_g)$. Let $(X,\mu_X)$ be a standard Borel probability $\Gamma_g$-space. Fix a hyperbolization $\pi_0:\Gamma_g \rightarrow \textup{PSL}(2,\R)$ and assume that $\Gamma_g$ acts on $\bbS^1$ via $\pi_0$. Consider a cocycle $\sigma:\Gamma_g \times X \rightarrow \homs$ with essentially unique boundary map $\phi:\bbS^1 \times X \rightarrow \bbS^1$. Then we have
\[
\eu(\sigma^f)=\eu(\sigma) \ ,
\]
for every measurable map $f:X \rightarrow \homs$. In particular $\eu$ is constant along the $\homs$-cohomology classes. 
\end{prop}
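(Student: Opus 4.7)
The plan is to mirror the argument of Proposition~\ref{gplus:cohomology}, taking advantage of the crucial simplification that every element of $\homs$ preserves orientation. Consequently, no sign twist appears and the Euler number will turn out to be constant along the entire $\homs$-cohomology class, without any additional hypothesis on $f$ (unlike in the volume case, where one needed $\varepsilon(f)$ essentially constant).

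First I would recall from Definitions~\ref{def:coomologhi:cicli} and~\ref{boundary:map:cohomology} that the twisted cocycle and its boundary map are given by
$$
\sigma^f(\gamma,x)=f(\gamma x)^{-1}\sigma(\gamma,x)f(x), \qquad \phi^f(\xi,x)=f(x)^{-1}\phi(\xi,x),
$$
for all $\gamma \in \Gamma_g$ and almost every $\xi \in \bbS^1$ and $x \in X$. Denote by $\Phi^f_X$ the composition of the pullback along $\phi^f$ with the fiber integration over $X$ (as in Definition~\ref{def:pullback:map}). Then for almost every $\xi_0,\xi_1,\xi_2 \in \bbS^1$ I would compute directly
\begin{align*}
\textup{C}^2(\Phi^f_X)(\epsilon)(\xi_0,\xi_1,\xi_2)&=\int_X \epsilon\bigl(\phi^f(\xi_0,x),\phi^f(\xi_1,x),\phi^f(\xi_2,x)\bigr)\,d\mu_X(x)\\
&=\int_X \epsilon\bigl(f(x)^{-1}\phi(\xi_0,x),f(x)^{-1}\phi(\xi_1,x),f(x)^{-1}\phi(\xi_2,x)\bigr)\,d\mu_X(x).
\end{align*}

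Since $\epsilon=-\orien/2$ and the orientation cocycle $\orien$ is by construction $\homs$-invariant (because every $f(x)\in\homs$ preserves the cyclic orientation of $\bbS^1$), the integrand simplifies to $\epsilon(\phi(\xi_0,x),\phi(\xi_1,x),\phi(\xi_2,x))$. Hence $\textup{C}^2(\Phi^f_X)(\epsilon)=\textup{C}^2(\Phi_X)(\epsilon)$ as elements of $\textup{L}^\infty((\bbS^1)^3;\R)^{\Gamma_g}$, and by definition of the Euler number
$$
\eu(\sigma^f)=\langle \comp^2\circ g_{\Sigma_g}^2[\textup{C}^2(\Phi^f_X)(\epsilon)],[\Sigma_g]\rangle=\langle \comp^2\circ g_{\Sigma_g}^2[\textup{C}^2(\Phi_X)(\epsilon)],[\Sigma_g]\rangle=\eu(\sigma).
$$
There is no real obstacle here: the only point to check is that the $\homs$-invariance of $\epsilon$ kicks in pointwise inside the integral, which requires nothing beyond the fact that $f$ takes values in $\homs$.
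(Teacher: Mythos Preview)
Your proof is correct and follows essentially the same approach as the paper: both recall the twisted boundary map $\phi^f$, use the $\homs$-invariance of $\epsilon$ to conclude $\textup{C}^2(\Phi^f_X)(\epsilon)=\textup{C}^2(\Phi_X)(\epsilon)$, and then read off the equality of Euler numbers. Your version is slightly more explicit in writing out the integral, but the argument is the same.
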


\begin{proof}
We know by Definitions~\ref{def:coomologhi:cicli} and~\ref{boundary:map:cohomology} that the twisted cocycle $$\sigma^f(\gamma,x):=f(\gamma x)^{-1} \sigma(\gamma,x) f(x)$$ admits an essentially unique boundary map given by
\[
\phi^f:\bbS^1 \times X \rightarrow \homs, \hspace{5pt} \phi^f(\xi,x):=f(x)^{-1}\phi(\xi,x) \ .
\]
Since the cocycle $\epsilon$ is $\homs$-invariant, the same strategy followed in the proof of Proposition~\ref{gplus:cohomology} shows that $\textup{C}^2(\Phi^f_X)(\epsilon)=\textup{C}^2(\Phi_X)(\epsilon)$. This provides the desired equality
$$
\eu(\sigma^f) =\langle \comp^2 \circ g_{\Sigma_g}^2 \left[\textup{C}^2(\Phi^f_X)(\epsilon)\right],[\Sigma_g] \rangle = \langle \comp^2 \circ g_{\Sigma_g}^2 [\textup{C}^2(\Phi_X)(\epsilon)],[\Sigma_g] \rangle=\eu(\sigma) \ .
$$
\end{proof}

We conclude this paragraph with a nice application of both Proposition~\ref{eu:representations} and Proposition~\ref{homs:cohomology}:
\begin{cor}\label{eu:hyperbolization}
Let $\Sigma_g$ be a closed surface of genus $g \geq 2$ and let $\Gamma_g=\pi_1(\Sigma_g)$. Consider a hyperbolization $\pi_0:\Gamma_g \rightarrow \textup{PSL}(2,\R)$ and with associated cocycle $\sigma_{\pi_0}$. Then, we have
\[
\eu(\sigma)=\chi(\Sigma_g)
\]
for every cocycle $\sigma$ lying in the cohomology class of $\sigma_{\pi_0}$. 
\end{cor}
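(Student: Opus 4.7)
The plan is to chain together the two preceding propositions and then invoke the classical computation of the Euler number of a hyperbolization. Concretely, given a cocycle $\sigma$ in the $\homs$-cohomology class of $\sigma_{\pi_0}$, there exists a measurable map $f \colon X \to \homs$ with $\sigma = \sigma_{\pi_0}^f$. By Proposition~\ref{homs:cohomology}, which asserts that the Euler number is constant along $\homs$-cohomology classes, we immediately obtain $\eu(\sigma) = \eu(\sigma_{\pi_0})$. Thus the problem reduces to computing $\eu(\sigma_{\pi_0})$.

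Next, I would apply Proposition~\ref{eu:representations} to the hyperbolization $\pi_0$. To do so I must verify its hypothesis, namely the existence of an essentially unique measurable map $\varphi \colon \bbS^1 \to \bbS^1$ equivariant with respect to the two actions of $\Gamma_g$ on $\bbS^1$ determined by $\pi_0$ (source) and $\pi_0$ (target). Since we are assuming throughout this section that $\Gamma_g$ acts on $\bbS^1$ via $\pi_0$, both actions coincide, and the identity map $\varphi = \mathrm{id}_{\bbS^1}$ is obviously equivariant; essential uniqueness follows from the double ergodicity of the $\Gamma_g$-action on $\bbS^1$ induced by $\pi_0$. Hence Proposition~\ref{eu:representations} yields $\eu(\sigma_{\pi_0}) = \eu(\pi_0)$, where on the right-hand side $\eu(\pi_0)$ denotes the classical Euler number of the hyperbolization $\pi_0$ viewed as a representation.

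Finally, I would invoke the classical fact, going back to Milnor and Wood and used in Matsumoto's theorem, that any hyperbolization $\pi_0 \colon \Gamma_g \to \textup{PSL}(2,\R)$ realizes the maximal value of the Euler number, which with the standard orientation convention equals $\chi(\Sigma_g)$. Combining the three identities we obtain
\[
\eu(\sigma) \;=\; \eu(\sigma_{\pi_0}) \;=\; \eu(\pi_0) \;=\; \chi(\Sigma_g),
\]
as required. No step here is genuinely difficult: all the hard analytic work has been absorbed into Propositions~\ref{eu:representations} and~\ref{homs:cohomology}. The only point deserving a line of justification is the verification that $\pi_0$ satisfies the boundary map hypothesis of Proposition~\ref{eu:representations}, which as indicated above is immediate from the convention that $\Gamma_g$ already acts on $\bbS^1$ through $\pi_0$.
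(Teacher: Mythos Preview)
Your proposal is correct and follows essentially the same route as the paper: both arguments observe that the identity on $\bbS^1$ serves as the equivariant boundary map for $\pi_0$ (since source and target actions coincide), apply Proposition~\ref{eu:representations} to get $\eu(\sigma_{\pi_0})=\eu(\pi_0)$, use Proposition~\ref{homs:cohomology} for invariance along the cohomology class, and conclude with the classical computation $\eu(\pi_0)=\chi(\Sigma_g)$ (which the paper attributes to Iozzi). Your added remark on essential uniqueness via double ergodicity is a reasonable elaboration but not strictly needed for the argument.
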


\begin{proof}
We first note that we can consider the identity $\textup{id}_{\bbS^1}:\bbS^1 \rightarrow \bbS^1$ as a measurable equivariant map with respect to the action of the hyperbolization $\pi_0$ on both the domain and the target space. Then, recall that if $\pi_0:\Gamma_g \rightarrow \textup{PSL}(2,\R)$ is a hyperbolization then $\eu(\pi_0) = \chi(\Sigma_g)$, as shown by Iozzi~\cite{iozzi02:articolo}. The result now follows from Propositions~\ref{eu:representations} and~\ref{homs:cohomology}. 
\end{proof}

\subsection{Rigidity and Euler number}\label{sec:euler:rigidity}

Given a closed surface $\Sigma_g$, we proved in Corollary~\ref{eu:hyperbolization} that the Euler number of a cocycle associated to a hyperbolization is equal to $\chi(\Sigma_g)$. In fact, a stronger result holds: the absolute value of the  Euler number of any cocycle defined in terms of $\pi_1(\Sigma_g)$ is always bounded by $|\chi(\Sigma_g)|$. We formalize this result in the following proposition, which can be interpreted as a generalized Milnor-Wood inequality (compare with~\cite{milnor:articolo,wood}). As explained in Remark~\ref{oss:nuova:uniforme}, up to a multiplicative constant this result provides an alternative proof of the generalized version of a result by Bader, Furman and Sauer~\cite[Corollary~4.9]{sauer:articolo}.

\begin{prop}\label{Milnor:wood:style}
Let $\Sigma_g$ be a closed hyperbolic surface of genus $g \geq 2$ and let $\Gamma_g=\pi_1(\Sigma_g)$. Let $(X,\mu_X)$ be a standard Borel probability $\Gamma_g$-space. Fix a hyperbolization $\pi_0:\Gamma_g \rightarrow \textup{PSL}(2,\R)$ and assume that $\Gamma_g$ acts on $\bbS^1$ via $\pi_0$. For every cocycle $\sigma:\Gamma_g \times X \rightarrow \homs$ with essentially unique boundary map $\phi:\bbS^1 \times X \rightarrow \bbS^1$, we have
\[
|\eu(\sigma)| \leq |\chi(\Sigma_g)| \ .
\]
\end{prop}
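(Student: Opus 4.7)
The plan is to mimic the strategy used in Proposition~\ref{prop:vol:estimate:wood} for the volume, exploiting the duality between bounded cohomology and the $\ell^1$-homology of $\Sigma_g$. More precisely, I would use the standard Kronecker pairing estimate
$$
|\eu(\sigma)| = |\langle \comp^2 \circ g^2_{\Sigma_g}[\textup{C}^2(\Phi_X)(\epsilon)], [\Sigma_g] \rangle| \leq \lVert g^2_{\Sigma_g}[\textup{C}^2(\Phi_X)(\epsilon)] \rVert_\infty \cdot \lVert [\Sigma_g] \rVert_1,
$$
recalling that, by Gromov's computation of the simplicial volume of surfaces, $\lVert [\Sigma_g]\rVert_1 = 2|\chi(\Sigma_g)|$.

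The rest of the argument reduces to showing that the first factor is bounded above by $1/2$. First, since by Equation~\eqref{eq:gromov:map} the map $g^2_{\Sigma_g}$ is an isometric isomorphism, one has $\lVert g^2_{\Sigma_g}[\textup{C}^2(\Phi_X)(\epsilon)]\rVert_\infty = \lVert [\textup{C}^2(\Phi_X)(\epsilon)]\rVert_\infty$. Second, by Theorem~\ref{teor:monod:resolution:L:inf} (applied to the uniform lattice $\Gamma_g < \textup{PSL}(2,\mathbb{R})$ with $\bbS^1$ as Furstenberg-Poisson boundary), the cohomology of $(\textup{L}^\infty((\bbS^1)^{\bullet+1};\R)^{\Gamma_g}, \delta^\bullet)$ is isometrically isomorphic to $\uph_b^2(\Gamma_g;\R)$, so taking the infimum over representatives gives
$$
\lVert [\textup{C}^2(\Phi_X)(\epsilon)] \rVert_\infty \leq \lVert \textup{C}^2(\Phi_X)(\epsilon) \rVert_\infty.
$$
Third, by Lemma~\ref{lemma:composizione:norm:non:increasing}, the pullback cochain map $\textup{C}^2(\Phi_X)$ is norm non-increasing, and the Euler cocycle $\epsilon = -\orien/2$ takes values in $\{-1/2, 0, 1/2\}$, so $\lVert \epsilon\rVert_\infty = 1/2$.

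Combining these observations yields
$$
|\eu(\sigma)| \leq \frac{1}{2} \cdot 2|\chi(\Sigma_g)| = |\chi(\Sigma_g)|,
$$
which is the desired inequality. The main subtlety I expect to watch out for is keeping track of the fact that the relevant isomorphism from the resolution $(\textup{L}^\infty((\bbS^1)^{\bullet+1};\R)^{\Gamma_g},\delta^\bullet)$ to $\uph_b^2(\Gamma_g;\R)$ is genuinely isometric (and not merely topological); this is precisely the content of Theorem~\ref{teor:monod:resolution:L:inf}, and if necessary one can work with the alternating subresolution mentioned in Remark~\ref{oss:alternating}, using that $\orien$, and hence $\epsilon$, is already alternating. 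No new ingredients beyond those developed in Sections~\ref{sec:bounded:cohomology} and~\ref{sec:lemma:tecnico} are needed.
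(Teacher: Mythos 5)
Your argument is correct, but it is a genuinely different route from the one in the paper. You bound the Kronecker pairing by duality with $\ell^1$-homology, using that $g^2_{\Sigma_g}$ is isometric, that the $\textup{L}^\infty$-resolution on $(\bbS^1)^{\bullet+1}$ computes $\uph^2_b(\Gamma_g;\R)$ isometrically, that $\textup{C}^2(\Phi_X)$ is norm non-increasing with $\lVert \epsilon \rVert_\infty = 1/2$, and finally Gromov's computation $\lVert [\Sigma_g]\rVert_1 = 2|\chi(\Sigma_g)|$ (in fact only the upper bound $\lVert [\Sigma_g]\rVert_1 \leq 2|\chi(\Sigma_g)|$ is needed, which is the elementary direction, obtainable from one-vertex triangulations of finite covers or from the proportionality principle already cited in the paper). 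The paper instead pushes $[\textup{C}^2(\Phi_X)(\epsilon)]$ forward by the transfer map into $\hcb^2(\textup{PSL}(2,\R);\R) \cong \R\,\textup{e}_b$, identifies the proportionality constant as $\lambda = \eu(\sigma)/\chi(\Sigma_g)$ via the Van Est isomorphism and Gauss--Bonnet, and then bounds $|\lambda| \leq 1$ using that both $\trans^2$ and $\textup{C}^2(\Phi_X)$ are norm non-increasing together with the equality $\lVert \textup{e}_b\rVert_\infty = \lVert \epsilon\rVert_\infty$ (which requires the double-ergodicity/alternating argument). Your approach is shorter and sidesteps the exact computation of $\lVert \textup{e}_b\rVert_\infty$, since you only ever need an upper bound on the class; but the paper's detour is not wasted effort: the identity $\trans^2[\textup{C}^2(\Phi_X)(\epsilon)] = \frac{\eu(\sigma)}{\chi(\Sigma_g)}\textup{e}_b$ established along the way is precisely the input for Proposition~\ref{prop:proportionality:eulero} and hence for the rigidity Theorem~\ref{matsumoto}, so your proof could replace the inequality itself but not the surrounding architecture.
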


\begin{proof}
Using the same notation of Section~\ref{sec:trans} and suitably adapting Diagram (\ref{eq:diagramma:commutativo:trasfer:maps}), we can produce the following commutative diagram
\begin{equation}\label{diagr:miln:wood:eulero}
\xymatrix{
\uph^2((\mathcal{B}^\infty(\bbS^1)^{\bullet+1},\R)^{\homs}) \ar[d]_-{\uph^2(\Phi_X)} \\
\uph^2_b(\Gamma_g;\R)  \ar[r]^{\hspace{-15pt}\textup{trans}^2} \ar[d]_-{g_{\Sigma_g}^2} & \hcb^2(\textup{PSL}(2,\R);\R) \ar[dd]^{\comp^2}\\
\uph^2_b(\Sigma_g;\R) \ar[d]_-{\comp^2} \\
\uph^2(\Sigma_g;\R) \cong \uph_{\textup{dR}}^2(\Sigma_g;\R) \ar[r]_-{\tau_{\textup{dR}}^2} & \uph^2_c(\textup{PSL}(2,\R);\R) \cong \Omega^2(\hyp^2)^{\textup{PSL}(2,\R)} \ .
}
\end{equation}
This diagram together with Gauss-Bonnet theorem will be the main ingredients in the proof. Recall that $\epsilon=-\orien \slash 2$ is the cocycle representing the Euler class on $\bbS^1$. Since the cohomology group $\hcb^2(\textup{PSL}(2,\R);\R)$ is one dimensional and generated by the Euler class $\textup{e}_b$, as stated by Iozzi~\cite{iozzi02:articolo}, we have
\[
\textup{trans}^2 [\textup{C}^2(\Phi_X)(\epsilon)]=\lambda \textup{e}_b \ .
\]

We want now to determine this constant explicitly. By applying to both sides of the equation the comparison map we get
\[
\comp^2 \circ \textup{trans}^2 [\textup{C}^2(\Phi_X)(\epsilon)]=\comp^2 (\lambda \textup{e}_b)=\lambda \comp^2(\textup{e}_b)= \lambda \textup{e} \ ,
\]
where $\textup{e} \in \uph_c^2(\textup{PSL}(2,\R);\R)$ is the Euler class seen in the continuous cohomology group. By the Van Est isomorphism we know that each continuous class corresponds bijectively to a $\textup{PSL}(2,\R)$-invariant differential form on the associated symmetric space, that means in this case the hyperbolic plane $\hyp^2$. By construction the class of the orientation cocycle $\orien$ is mapped to the standard volume form on $\hyp^2$ normalized by $\pi$, that is $\omega/\pi$. This means that the Euler class $\textup{e}=[-\orien/2]$ will be represented by $-\omega/2\pi$.

Now the volume form $\omega$ defines a natural volume form $\omega_0 $ on the surface $\Sigma_g$ endowed with the hyperbolic structure determined by the hyperbolization $\pi_0$, that is $\pi_0(\Gamma_g) \backslash \hyp^2$. Moreover the transfer map $\tau_{\textup{dR}}$ is injective in degree $2$ and maps $\omega_0$ in $\omega$ (see Remark~\ref{tau:iso:grado:massimo}). 

By the commutativity of Diagram (\ref{diagr:miln:wood:eulero}), we have 
\[
\tau_{\textup{dR}}^2 (\comp^2 \circ g_{\Sigma_g}^2 [\textup{C}^2(\Phi_X)( \epsilon)] = \comp^2 \circ \textup{trans}^2[\textup{C}^2(\Phi_X)(\epsilon)]=\lambda \textup{e}=\lambda \tau_{\textup{dR}}^2 \left( \left[-\frac{\omega_0}{2\pi}\right] \right)\ ,
\]
and by injectivity of the transfer map we get 
\[
\comp^2 \circ g_{\Sigma_g}^2 \left[\textup{C}^2(\Phi_X)(\epsilon)\right]=\lambda \left[ -\frac{\omega_0}{2 \pi} \right] \ .
\]

If we now we evaluate both sides on the fundamental class $[\Sigma_g] \in \uph^2(\Sigma_g;\R)$ we obtain
\begin{align*}
\eu(\sigma)&=\langle \comp^2 \circ g_{\Sigma_g}^2 \left[\textup{C}^2(\Phi_X)(\epsilon)\right], [\Sigma_g] \rangle=\\
	        &=\langle \lambda \left[-\frac{\omega_0}{2 \pi}\right], [\Sigma_g] \rangle\\ &=-\frac{\lambda}{2\pi} \int_{\Sigma_g}\omega_0\\
                   &= -\frac{\lambda}{2\pi} (4\pi(g-1))=\lambda \chi(\Sigma_g) \ ,\\ 
\end{align*}
where we used the Gauss-Bonnet theorem to pass from the third to the fourth line. In particular we get that 
\[
\lambda=\frac{\eu(\sigma)}{\chi(\Sigma_g)} \ . 
\]
If we now take the absolute value on both sides, using Lemma~\ref{lemma:composizione:norm:non:increasing}, it follows that 
\[
|\lambda|=\frac{|| \trans^2 [\textup{C}^2(\Phi_X)(\epsilon)]||_\infty}{||\textup{e}_b||_\infty} \leq 1 \ ,
\]
by an argument similar to the one exposed in Proposition~\ref{prop:vol:estimate:wood}. Note that $||\textup{e}_b||_\infty = ||\epsilon||_\infty$ because of the double ergodicity of the action of $\Gamma_g$ induced by $\pi_0$ and the fact that $\epsilon$ is alternating. Indeed, double ergodicity implies that essentially bounded $\Gamma$-invariant functions $f \colon (\mathbb{S}^1)^2 \rightarrow \mathbb{R}$ are constant and so if we compute bounded cohomology via the subresolution of alternating functions, as explained in Remark~\ref{oss:alternating}, we have that $\textup{L}^\infty_{\textup{alt}}((\mathbb{S}^1)^2;\mathbb{R})^\Gamma=0$.

% Note that the latter result is due to the fact that $\Gamma$-invariant functions are constant (by double ergodicity) and the fact that the cochain are alternating implies that they must vanish identically. This concludes the proof. 
\end{proof}

We move forward to reach the proof of the generalized Matsumoto theorem for cocycles. In order to do this, we need first to prove Proposition \ref{prop:proportionality:eulero} where we express the Euler number as a multiplicative constant between cocycles. The result we are going to prove will generalize~\cite[Proposition 1.7]{iozzi02:articolo} (compare this result with the generalized version of Bader, Furman and Sauer's formula~\cite[Lemma~4.10]{sauer:articolo}).

\begin{proof}[Proof of Proposition \ref{prop:proportionality:eulero}]
We have already showed in the proof of Proposition~\ref{Milnor:wood:style} that $$\textup{trans}^2 [\textup{C}^2(\Phi_X)(\epsilon)]=\frac{\eu(\sigma)}{\chi(\Sigma_g)}\textup{e}_b,$$ and hence by linearity we argue that $$\textup{trans}^2 [\textup{C}^2(\Phi_X)(\orien)]=\frac{\eu(\sigma)}{\chi(\Sigma_g)}[\orien].$$ Since there are no essentially bounded $\textup{PSL}(2,\R)$-invariant cocycles on $\bbS^1$ by the doubly ergodic action of $\Gamma_g$ induced by $\pi_0$ and the fact that the orientation cocycle is alternating, the previous equality holds actually at the level of cocycles, that means $$\trans^2 \circ \textup{C}^2(\Phi_X)(\orien)=\frac{\eu(\sigma)}{\chi(\Sigma_g)}\orien,$$ and the statement follows.  
\end{proof}

Thanks to Proposition~\ref{prop:proportionality:eulero}, we are now ready to prove Theorem~\ref{matsumoto}.

\begin{proof}[Proof of Theorem~\ref{matsumoto}]
By Corollary~\ref{eu:hyperbolization} we already know that the cohomology class of a cocycle associated to a hyperbolization has maximal Euler number. We are going to prove the converse. 
Let $\sigma: \Gamma_g \times X \rightarrow \homs$ be a cocycle with essentially unique boundary map $\phi: \bbS^1 \times X \rightarrow \bbS^1$. Assume $|\eu(\sigma)|=\chi(\Sigma_g)$. Up to composing with a suitable homeomorphism which reverses the orientation of $\bbS^1$ we can assume that the equation holds even dropping the absolute value. Consider now a triple of points $(\xi_0,\xi_1,\xi_2) \in (\bbS^1)^3$, on which the orientation cocycle is equal to $+1$. 

Define the map $\phi_x: \bbS^1 \rightarrow \bbS^1$ by $\phi_x(\xi):=\phi(\xi,x)$, which is measurable for almost every $x \in X$ by~\cite[Lemma 2.6]{fisher:morris:whyte}. As a consequence of Proposition~\ref{prop:proportionality:eulero} we obtain
\[
\int_{\pi_0(\Gamma_g)\backslash \textup{PSL}(2,\R)} \int_X \orien(\phi_x(\overline{g}\cdot\xi_0),\phi_x(\overline{g}\cdot \xi_1),\phi_x(\overline{g}^{-1}\cdot \xi_2))d\mu_X(x)d\mu_0(\bar g)=1,
\]
for almost every $(\xi_0,\xi_1,\xi_2) \in (\bbS^1)^3$. We remind the reader that $\mu_0$ is the induced normalized probability measure by the Haar measure of $\textup{PSL}(2,\R)$ on the quotient $\pi_0(\Gamma) \backslash \textup{PSL}(2,\mathbb{R})$. The previous equality implies that 
\[
 \orien(\phi_x(\overline{g}\cdot \xi_0),\phi_x(\overline{g}\cdot \xi_1),\phi_x(\overline{g}\cdot \xi_2))=1,
\]
for almost every $\overline{g} \in \pi_0(\Gamma_g) \backslash \textup{PSL}(2,\R)$ and almost every $x \in X$. By the equivariance of the map $\phi$ with respect to the cocycle $\sigma$ the equality above actually holds for almost every $g \in \textup{PSL}(2,\R)$ and for almost every $x \in X$. This means that the map $\phi_x:\bbS^1 \rightarrow \bbS^1$ is order-preserving for almost every $x \in X$. As proved by Iozzi~\cite{iozzi02:articolo}, this implies that the map $\phi_x$ must agree almost everywhere with a orientation-preserving homeomorphism $f(x) \in \homs$. In this way we get a function $f:X \rightarrow \homs$. Since $X$ is standard Borel, also in this case the measurability of $f$ is guaranteed by the fact that $\hat \phi:X \rightarrow \textup{Meas}(\bbS^1,\bbS^1), \hspace{5pt} \hat \phi(x):=\phi_x$ is measurable by~\cite[Lemma 2.6]{fisher:morris:whyte} and its image lies in $\homs \subset \textup{Meas}(\bbS^1,\bbS^1)$ (see also~\cite[Proposition~3.2]{sauer:articolo}). Then, by applying~\cite[Proposition~3.2]{sauer:articolo}, the thesis follows as in Theorem~\ref{mostow}.
\end{proof}

As in the case of the volume, by Proposition~\ref{eu:representations} we already know that among the maximal cocycles we may find the ones associated to maximal representations. Moreover, since in the case of $(\Gamma_g, \Gamma_g)$-couplings our Euler number only differs by a multiplicative constant from Bader-Furman-Sauer's one, we have the following corollary:

\begin{cor}\label{cor:massimalita:coupling:eulero}
Let $\Sigma_g$ be a closed surface of genus $g \geq 2$ and let $\Gamma_g=\pi_1(\Sigma_g)$. Fix a hyperbolization $\pi_0:\Gamma_g \rightarrow \textup{PSL}(2,\R)$ and suppose that $\Gamma_g$ acts on $\bbS^1$ via $\pi_0$. Let $(\Omega,m_\Omega)$ be an ergodic integrable $(\Gamma_g,\Gamma_g)$-coupling with associated right measure equivalence cocycle $\alpha_\Omega:\Gamma_g \times \Gamma_g \backslash \Omega \rightarrow \Gamma_g$. Then it holds 
$$
|\eu(\alpha_\Omega)| = |\chi(\Sigma_g)|
$$
\end{cor}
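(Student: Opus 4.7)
The plan is to mimic the strategy used in the proof of Corollary~\ref{cor:value:coupling} and reduce the statement to a known maximality result for Bader–Furman–Sauer's Euler number of ergodic integrable self-couplings.

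First, I would observe that since $(\Omega,m_\Omega)$ is a $(\Gamma_g,\Gamma_g)$-coupling, the general theory of Furstenberg boundaries (see the discussion around Diagram~\eqref{diagr:numero:euler:bfs} and the references therein) guarantees the existence of an essentially unique boundary map $\phi:\mathbb{S}^1 \times \Gamma_g\backslash\Omega \to \mathbb{S}^1$ associated to $\alpha_\Omega$, where $\Gamma_g$ acts on $\mathbb{S}^1$ via $\pi_0$. Therefore $\eu(\alpha_\Omega)$ is well-defined in the sense of Definition~\ref{euler}.

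Next, I would invoke the proportionality established in Remark~\ref{oss:nuova:uniforme}, namely
$$
\eu(\alpha_\Omega) \,=\, -\frac{\eu(\Omega)}{2\pi} \ ,
$$
so that the claim $|\eu(\alpha_\Omega)| = |\chi(\Sigma_g)|$ is equivalent to showing $|\eu(\Omega)| = 2\pi\,|\chi(\Sigma_g)|$. This latter identity is precisely the two-dimensional counterpart of \cite[Corollary~4.12]{sauer:articolo}, which is used in the proof of Corollary~\ref{cor:value:coupling}: for ergodic integrable self-couplings of a closed hyperbolic manifold the Euler number of the coupling attains its extremal value $\pm \Vol(\Sigma_g)$. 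By the Gauss–Bonnet theorem, $\Vol(\Sigma_g) = 2\pi\,|\chi(\Sigma_g)|$ for the hyperbolic structure determined by $\pi_0$, which gives the desired equality.

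The main subtle point is ensuring that the maximality statement from \cite{sauer:articolo}, which in \emph{loc.~cit.} is written down explicitly for $\po(n,1)$ with $n\geq 3$, is genuinely available in dimension two. However, Bader–Furman–Sauer treat exactly this two-dimensional situation when they prove the relative tautness of $\textup{PSL}(2,\mathbb{R})$ inside $\homs$ via the study of the Euler number of maximal self-couplings, so the required extremality is indeed at our disposal. Once this is granted, the two displayed equalities above combine to finish the proof without any further work.
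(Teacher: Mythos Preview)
Your proof is correct and follows essentially the same route as the paper: invoke the proportionality $\eu(\alpha_\Omega)=-\eu(\Omega)/2\pi$ from Remark~\ref{oss:nuova:uniforme}, apply \cite[Corollary~4.12]{sauer:articolo} to get $|\eu(\Omega)|=\Vol(\pi_0(\Gamma_g)\backslash\hyp^2)$, and finish with Gauss--Bonnet. Your additional remarks on the existence of the boundary map and the applicability of the Bader--Furman--Sauer result in dimension two are reasonable clarifications but not needed for the argument itself.
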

\begin{proof}
Since we know by~\cite[Corollary~4.12]{sauer:articolo} that integrable self couplings satisfy $\eu(\Omega) = \pm \Vol(\pi_0(\Gamma_g) \backslash \hyp^2)$, by Remark~\ref{oss:nuova:uniforme}, we get 
$$
|\eu(\alpha_\Omega)| = \left|\frac{\eu(\Omega)}{2\pi}\right| = \frac{\Vol (\pi_0(\Gamma_g) \backslash \mathbb{H}^2)}{2\pi}= |\chi(\Sigma_g)| \ ,
$$
where in the last equality we used Gauss-Bonnet theorem. This concludes the proof.
\end{proof}

%%**********************************************************
%%Materiale Finale
%%**********************************************************

%**************************
% Bibliografia
%**************************

\bibliographystyle{amsalpha}
\bibliography{biblionote}
\end{document}